\newcommand{\blue}{\textcolor{blue}}
\newcommand{\R}{\mathbb{R}}
\newcommand{\N}{\mathbb{N}}
\DeclareMathOperator{\Cay}{Cay}
\DeclareMathOperator{\ord}{ord}
\DeclareMathOperator{\Id}{Id}
\definecolor{codegreen}{rgb}{0,0.6,0}
\definecolor{codegray}{rgb}{0.5,0.5,0.5}
\definecolor{codepurple}{rgb}{0.58,0,0.82}
\definecolor{backcolour}{rgb}{0.95,0.95,0.92}
\renewenvironment{thebibliography}[1]{
	\begin{oldthebibliography}{#1}
		\setlength{\parskip}{0ex}
		\setlength{\itemsep}{0ex}
	}
	{
	\end{oldthebibliography}
}
\lstdefinestyle{mystyle}{
	backgroundcolor=\color{backcolour},   
	commentstyle=\color{codegreen},
	keywordstyle=\color{magenta},
	numberstyle=\tiny\color{codegray},
	stringstyle=\color{codepurple},
	basicstyle=\ttfamily\footnotesize,
	breakatwhitespace=false,         
	breaklines=true,                 
	captionpos=b,                    
	keepspaces=true,                 
	numbers=left,                    
	numbersep=5pt,                  
	showspaces=false,                
	showstringspaces=false,
	showtabs=false,                  
	tabsize=2
}
\theoremstyle{plain}
\newtheorem{thm}{Theorem}[section]
\newtheorem{prop}[thm]{Proposition}
\newtheorem{lemma}[thm]{Lemma}
\theoremstyle{definition}
\newtheorem{defin}[thm]{Definition}
\newtheorem{ex}[thm]{Example}
\newtheorem{rmk}[thm]{Remark}
\newcommand{\diag}{\operatorname{diag}}
\title{Bakry-\'Emery and Ollivier Ricci Curvature of Cayley Graphs}
\author[Cushing]{David Cushing}
\address{Department of Mathematics, The University of Manchester, Manchester, UK}
\email{david.cushing@manchester.ac.uk}
\author[Kamtue]{Supanat Kamtue}
\address{Yau Mathematical Sciences Center, Tsinghua University, Beijing, China}
\email{skamtue@tsinghua.edu.cn}
\author[Kangaslampi]{Riikka Kangaslampi}
\address{Unit of Computing Sciences, Tampere University, Finland}
\email{riikka.kangaslampi@tuni.fi}
\author[Liu]{Shiping Liu}
\address{School of Mathematical Sciences, University of Science and Technology of China, Hefei 230026, China.}
\email{spliu@ustc.edu.cn}
\author[M\"unch]{Florentin M\"unch}
\address{MPI MiS Leipzig, 04103 Leipzig, Germany}
\email{florentin.muench@mis.mpg.de}
\author[Peyerimhoff]{Norbert Peyerimhoff}
\address{Department of Mathematical Sciences, Durham University, Durham, UK}
\email{norbert.peyerimhoff@durham.ac.uk}
\date{\today}
\begin{document}

\maketitle

\begin{abstract}
    In this article we study two discrete curvature notions, Bakry-\'Emery curvature and Ollivier Ricci curvature, on Cayley graphs. We introduce Right Angled Artin-Coxeter Hybrids (RAACHs) generalizing Right Angled Artin and Coxeter groups (RAAGs and RACGs) and derive the curvatures of Cayley graphs of certain RAACHs. Moreover, we show for general finitely presented groups $\Gamma = \langle S \, \mid\, R \rangle$ that addition of relators does not lead to a decrease the weighted curvatures of their Cayley graphs with adapted weighting schemes.  
\end{abstract}

\tableofcontents

	\section{Introduction}

In recent decades, various notions of Ricci curvature were introduced and studied on discrete spaces like combinatorial and weighted graphs. Two natural curvature concepts are due to Ollivier and Bakry-\'Emery and their computation can be described as a specific linear optimization problem. 
They are the curvature notions considered in this paper. An interesting class of graphs are Cayley graphs, which are metric realisations of finitely generated groups. 
Various papers are concerned with curvature properties of certain Cayley graphs (see, e.g., \cite{ChY96,LY10,LLY11,BHLLMY15,KKRT16,EHMT17,Mu18,BS19,CLP20,KLM20,Sic20,Sic21,RS23}). For example, it is well known that abelian Cayley graphs have both non-negative Ollivier Ricci and Bakry-\'Emery curvature (see \cite{LY10,KKRT16} and also \cite{CKKLP21} with its references). 
Since Cayley graphs are vertex transitive and both curvature notions are local and invariant under graph isomorphisms, it suffices to study curvature properties of Cayley graphs near the identity. In this paper, we provide curvature results for the Cayley graphs of some Right Angled Artin-Coxeter Hybrids (RAACHs), which are a generalization of Right Angled Coxeter Groups (RACGs)  and Right Angled Artin Groups (RAAGs). We study also the effect on the Cayley graph curvatures when adding relators in finitely presented groups. Generally, the Cayley graphs become more connected under this process and one would expect that their curvatures do not decrease. In this introduction, we discuss briefly the relevant concepts, present our results and provide an outline of the paper.

\subsection{Concepts}

Cayley graphs are vertex transitive graphs associated to finitely generated groups. Let $\Gamma$ be a group with a finite set $S$ of non-trivial generators. Henceforth, we use the notation $S^* = \{s,s^{-1} \mid s \in S \}$ for the symmetrized set of generators. The associated \emph{Cayley graph} is a combinatorial graph, denoted by $\Cay(\Gamma,S)$, with vertex set $\Gamma$ and an edge between $\gamma, \gamma' \in \Gamma$ if and only if we have $\gamma' = \gamma s$ for some $s \in S^*$. $\Gamma$ acts transitively on $\Cay(\Gamma,S)$ by left multiplication. For general combinatorial graphs we use the notation $G=(V,E)$, where $V$ denotes the vertex set of $G$ and $E$ denotes its set of edges. We call $x,y \in V$ \emph{neighbours}, if they are connected by an edge and we write $x \sim y$. Connected combinatorial graphs have a natural \emph{distance function} $d: V \times V \to \N \cup \{0]$, and spheres and ball around a vertex $x \in V$ are defined by $S_k(x) = \{ y \in V \mid d(x,y) = k\}$ and $B_k(x) = \{y \in V \mid d(x,y) \le k \}$, respectively.
All graphs in this paper will be simple, that is, they do not have loops or multiple edges, but they may be weighted.

We like to emphasize that, in this paper, Cayley graphs associated to $\Gamma$ and $S$ are always simple, even though $\gamma' = \gamma s$ implies $\gamma = \gamma' s^{-1}$ with $s,s^{-1} \in S^*$, and readers might therefore be tempted to insert two edges connecting $\gamma$ and $\gamma'$ in the case $\ord(s) \ge 3$. However, such a relation between $\gamma, \gamma' \in \Gamma$ leads to only one indirected edge in $\Cay(\Gamma,S)$ in this paper.
In the context of Theorem \ref{thm:addrel} below, we will encounter the issue of merging and collapsing of generators, but we will then take care of this phenomenon by keeping the underlying Cayley graph simple and dealing with the merging of generators by introducing weights. Readers may think that collapsing of generators would lead to loops, but we simply ignore these collapsed generators and therefore avoid any loops in the corresponding Cayley graphs. These comments are meant to avoid any confusion about Cayley graphs now and later in this introduction.

A combinatorial graph $G$ is called  \emph{weighted}, if it comes with a vertex measure $m: V \to (0,\infty)$ and/or edge weights $w: E \to (0,\infty)$. Sometimes it is convenient to consider edge weights as a symmetric function $w: V \times V \to [0,\infty)$ where we have $w(x,y) = w(y,x)$, and $w(x,y) = 0$ if and only if $x \not\sim y$.
The \emph{degree} of a vertex $x \in V$, denoted by $\deg(x)$, is the number of its neighbours in the case of an unweighted graph, and defined as $\deg(x) = \sum_{y \sim x} w(x,y)$ in the case of a weighted graph $(G,m,w)$. The \emph{Laplacian} on a weighted graph $(G,m,w)$ is defined on functions $f: V \to \R$ as follows:
\begin{equation} \label{eq:wlap} 
\Delta f(x) = \frac{1}{m(x)} \sum_{y \in V} w(x,y)(f(y)-f(x)). \end{equation}
In the case of an unweighted graph we choose $m \equiv \mathbbm{1}_V$ and $w(x,y) =1$ for $x \sim y$. We refer to this Laplacian as the \emph{non-normalized Laplacian}. Another possible choice is $m(x)=\deg(x)$ and $w(x,y)=1$ for $x \sim y$, in which case the associated Laplacian is called the \emph{normalized Laplacian}. If $m(x) \ge \sum_{y \in V} w(x,y)$ for all $x \in V$, we can think of $\mu_x(y) = \frac{w(x,y)}{m(x)}$ as transition probabilities of a random walk with laziness $\mu_x(x) = 1 - \sum_{y \sim x} \mu_x(y)$, and we call the associated Laplacian the \emph{random walk Laplacian}. The normalized Laplacian coincides with the Laplacian associated to the simple random walk without laziness.
Note that $-\Delta$ has only real eigenvalues and is non-negative. We denote its eigenvalues in increasing order by
$$ 0 = \lambda_1(-\Delta) \le \lambda_2(-\Delta) \le \cdots \le\lambda_n(-\Delta), $$
where $n$ is the number of vertices of $G$. For a weighted graph with missing vertex measure or missing edge weigths, we choose the trivial vertex measure $m \equiv \mathbbm{1}_V$ or trivial edge weights $w \equiv \mathbbm{1}_E$ for the corresponding Laplacian, unless stated otherwise.

Let us now introduce a special family of finitely presented groups which we call \emph{Right Angled Artin-Coxeter Hybrids (RAACHs)}. A RAACH $\Gamma$ is determined by a finite weighted graph $(H,m)$ with vertex set $S$ and a function $m: S \to \{2,3,4,\infty\}$. The elements of $S$ are the generators of $\Gamma$ and their orders are determined by $m$. Henceforth, we use the notation
$$ R_j := \{ s \in S\, \mid\, \ord(s) = j \} $$
for the generators of order $j \in \{2,3,4,\infty\}$. By abuse of notation, we will also use the same symbol $R_j$ for the set of relations $\{ s^j = e\, \mid \, s \in R_j \}$. The only other defining relations of a RAACH $\Gamma$ are commutator relations $[s,t] = s^{-1} t^{-1} s t = e$, which are present if and only if $s,t \in S$ are neighbours in $H$. Collecting all this commutator relations into the set $C_H$, we define the RAACH $\Gamma$ as
$$ \Gamma = \langle S \, \mid \, R_2 \cup R_3 \cup R_4 \cup R_\infty \cup C_H \rangle. $$
In the case $R_\infty = S$, $\Gamma$ is a Right Angled Artin Group (RAAG), and in the case $R_2 = S$, $\Gamma$ is a Right Angled Coxeter Group (RACG). We refer to the graph $(H,m)$ as the \emph{defining graph} of the RAACH $\Gamma$. 

Let us finish with some background about the two curvature notions considered in this paper.

\emph{Ollivier Ricci curvature} is motivated by the following fact in Riemannian Geometry: positive/negative Ricci curvature implies, in the setting of Riemannian manifolds, that the average distance between corresponding points in closeby small balls is smaller/larger than the distance between their centers (see \cite{RS05}). Ollivier transferred this property to metric spaces with random walks in \cite{Oll09} with the help of probability measures on balls in the language of Optimal Transport Theory. In the setting of graphs, Ollivier Ricci curvature $\kappa_p$ depends on an idleness parameter $p \in [0,1]$ and can be understood as a curvature notion on their edges.  In this paper we will use a slight variation of this curvature notion due to Lin-Lu-Yau \cite{LLY11}, defined by 
$$ \kappa_{LYY}(x,y) = \lim_{p \to 1} \frac{\kappa_p(x,y)}{1-p} $$ 
for adjacent vertices $x \sim y$ representing an edge. We will also use an alternative limit-free description of $\kappa_{LLY}$ by M\"unch-Wojciechowski \cite{MW19}
using the random walk Laplacian $\Delta$, which is based on the duality principle:
$$ \kappa_{LLY}(x,y) = \inf_{\substack{f \in \textrm{\rm{1}--{\rm Lip}}\\ f(y)-f(x) = 1}} \Delta f(x) - \Delta f(y). $$
The reformulation of Ollivier Ricci curvature via Laplacians allows to consider Ollivier Ricci curvature for general weighted graphs and to remove the original restriction to probability measures.

\emph{Bakry-\'Emery curvature} was introduced in \cite{BE84}, and it is motivated by the curvature-dimension inequality. This inequality is closely related to Bochner's formula, a fundamental analytic identity in Riemannian Geometry involving the Laplacian. Its translation into the setting of graphs has been discussed in many papers (see, e.g., \cite{Elw91}, \cite{Sch99} and \cite{LY10}) and leads to a curvature notion on the vertices of a graph associated to a dimension parameter $\mathcal{N} \in (0,\infty]$. In this paper, we restrict our considerations to the choice $\mathcal{N} = \infty$ and denote 
the Bakry-\'Emery curvature of a vertex $x \in V$ by $K(x)$. 
It was discovered independently in \cite{Sic20,Sic21} and in \cite{CKLP22} that the Bakry-\'Emery curvature $K(x)$ agrees with the minimal eigenvalue of a certain symmetric matrix $A(x)$, that is
$$ K(x) = \lambda_{\min}(A(x)). $$
This viewpoint reduces the computation of Bakry-\'Emery curvature from a semidefinite programming problem to an eigenvalue problem, which is numerically much easier to handle. The matrix $A(x)$, which we call the \emph{curvature matrix} at the vertex $x\in V$, is derived in the graph setting from the above-mentioned curvature-dimensional inequality by a manipulation involving the Schur complement. We will use this eigenvalue description in the proof of our Bakry-\'Emery curvature results  for certain RAACHs. 

Both curvature notions are local concepts, that is, we only need to know about a small neighbourhood of a vertex or an edge in the graph to compute the curvature. In the case of Bakry-\'Emery curvature, this neighbourhood of a vertex $x$ is the incomplete $2$-ball $\mathring{B}_2(x)$, that is, the induced subgraph of the $2$-ball around $x$ with all edges between two vertices in the $2$-sphere removed (see, e.g., \cite{CLP20,CKPWM20}). Since Cayley graphs are vertex transitive, they have constant Bakry-\'Emery curvature and the Ollivier curvatures of edges around each vertex are the same. 

\subsection{Results}

Let us now provide a description of the main results in this paper. Our first result can be viewed as an amusing general fact about Right Angles Artin-Coxeter Hybrids (RAACHs).

\begin{prop}[Elimination of $R_4$ and $R_\infty$]
\label{prop:elimR4Rinf}
Let $\Gamma$ be a RAACH with generating set 
$$ S = R_2 \cup R_3 \cup R_4 \cup R_\infty. $$ 
Then there exists another RAACH $\Gamma'$ with generating set 
$$ S' = R_2' \cup R_3' $$
such that the Cayley graphs $G = \Cay(\Gamma,S)$ and $G' = \Cay(\Gamma',S')$ are isomorphic and have therefore the same Bakry-\'Emery and Ollivier Ricci curvatures.
\end{prop}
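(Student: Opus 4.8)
The plan is to realise $\Gamma$ as a graph product of cyclic groups and to exchange each cyclic factor of order $4$ or $\infty$ for a two–generator group with the same Cayley graph, at the cost of ``blowing up'' the defining graph. It suffices to treat a single generator $s\in S$ with $\ord(s)\in\{4,\infty\}$ and then iterate, since each such replacement leaves us inside the class of RAACHs (the orders stay in $\{2,3,4,\infty\}$). Given $s$, let $N=N_H(s)$ be its neighbours in the defining graph $(H,m)$, and recall the standard (visual) graph–product decomposition
$$ \Gamma \;\cong\; \bigl(\langle s\rangle\times\Gamma_{\mathrm{lk}(s)}\bigr)\ast_{\Gamma_{\mathrm{lk}(s)}}\Gamma_{H-s}, $$
where $\Gamma_{\mathrm{lk}(s)}\le\Gamma$ is the sub-RAACH generated by $N$ (defining graph $H[N]$), $\Gamma_{H-s}\le\Gamma$ is the sub-RAACH generated by $S\setminus\{s\}$, the amalgamated subgroup $\Gamma_{\mathrm{lk}(s)}$ embeds in the first factor as $\{e\}\times\Gamma_{\mathrm{lk}(s)}$ and in $\Gamma_{H-s}$ as the obvious sub-graph-product, and both of these subgroups are generated by generators the construction will not touch.

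Next, define a new defining graph $(H',m')$ by deleting $s$, adding two new vertices $s_1,s_2$ of order $2$, joining $s_1$ to $s_2$ precisely when $\ord(s)=4$, making each of $s_1,s_2$ adjacent to exactly the vertices of $N$, and leaving $(H,m)$ otherwise unchanged; put $\Gamma'=\Gamma_{H'}$. After iterating over all original order-$4$ and order-$\infty$ generators, $\Gamma'$ is a RAACH with $R_4'=R_\infty'=\varnothing$, $R_2'=R_2\sqcup\{s_1,s_2:s\in R_4\cup R_\infty\}$ and $R_3'=R_3$. The same visual decomposition applies to $\Gamma'$, with the \emph{same} second factor $\Gamma_{H-s}$ and the \emph{same} amalgamated subgroup $\Gamma_{\mathrm{lk}(s)}$, only with $\langle s\rangle$ replaced by $\langle s_1,s_2\rangle$, which is $(\Z/2)^2$ when $\ord(s)=4$ and $D_\infty=\Z/2\ast\Z/2$ when $\ord(s)=\infty$.

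The key coincidence is that $\Cay(\Z/4,\{s\})$ and $\Cay((\Z/2)^2,\{s_1,s_2\})$ are both the $4$-cycle, while $\Cay(\Z,\{s\})$ and $\Cay(D_\infty,\{s_1,s_2\})$ are both the bi-infinite path, and in each case there is a graph isomorphism fixing the identity vertex (for the $4$-cycle, send $0,1,2,3\mapsto e,s_1,s_1s_2,s_2$; for the path, send $n\ge 0$ to the length-$n$ alternating word $s_1s_2s_1\cdots$ and $n\le 0$ to the length-$|n|$ word $s_2s_1s_2\cdots$). Since the Cayley graph of a direct product with respect to the union of generating sets is the Cartesian product of the Cayley graphs, this isomorphism induces $\Cay(\langle s\rangle\times\Gamma_{\mathrm{lk}(s)})\xrightarrow{\sim}\Cay(\langle s_1,s_2\rangle\times\Gamma_{\mathrm{lk}(s)})$ restricting to the identity on the common subgraph $\Cay(\Gamma_{\mathrm{lk}(s)})$ (the $\{e\}$-slice). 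Finally, the Cayley graph of an amalgamated product $A\ast_C B$ with respect to $S_A\sqcup S_B$ is the ``tree of spaces'' assembled from copies of $\Cay(A,S_A)$ and $\Cay(B,S_B)$, one per coset of $A$ resp.\ $B$, glued along their embedded copies of $\Cay(C,S_C)$ according to the Bass--Serre tree; because the $B$- and $C$-data agree for $\Gamma$ and $\Gamma'$, the $A$-data are isomorphic via a map fixing $C$, and the two Bass--Serre trees coincide (the relevant indices $[A:C]$ and $[B:C]$ are unchanged), we obtain $\Cay(\Gamma,S)\cong\Cay(\Gamma',S')$. As Bakry--\'Emery and Ollivier Ricci curvature are graph-isomorphism invariants, the proposition follows.

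The step needing care is the last one: making precise that $\Cay(A\ast_C B,S_A\sqcup S_B)$ is functorially determined by $\Cay(A,S_A)$, $\Cay(B,S_B)$ and their embedded copies of $\Cay(C,S_C)$. The cleanest way to carry this out — and the route I would actually write — is to bypass amalgams and build the isomorphism $\Cay(\Gamma,S)\to\Cay(\Gamma',S')$ directly on normal forms: every element of a graph product of cyclic groups has a reduced expression as a product of syllables lying in the vertex groups, and one replaces the syllable belonging to $\langle s\rangle$ (when present) by the normal form of its image under the fixed graph isomorphism $\Z/4\to(\Z/2)^2$ (resp.\ $\Z\to D_\infty$), leaving all other syllables alone. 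Checking well-definedness (compatibility with the shuffle moves, which is where $s_i\sim N$ in $H'$ is used) and adjacency-preservation amounts to the observation that right-multiplication by a generator, read off on reduced words, alters only the syllable in that generator's vertex — exactly the slot on which the chosen graph isomorphism acts — so that the syllable calculus for graph products (commutation and cancellation of syllables) is the real technical content.
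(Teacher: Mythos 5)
Your proposal is correct and rests on the same key replacement as the paper: an order-$4$ generator is traded for two commuting involutions and an order-$\infty$ generator for two non-commuting involutions, the point being that $\Cay(\Z/4\Z,\{s\})$ and $\Cay((\Z/2\Z)^2,\{s_1,s_2\})$ are both the $4$-cycle while $\Cay(\Z,\{s\})$ and $\Cay(\Z/2\Z\ast\Z/2\Z,\{s_1,s_2\})$ are both the bi-infinite path, after which one iterates over $R_4\cup R_\infty$. Where you genuinely diverge is in how the Cayley graph isomorphism is assembled. The paper (Propositions \ref{prop:R4empty} and \ref{prop:Rinfempty}) writes down an explicit word-rewriting bijection: expand powers of $s_0$ into strings of single letters and replace the occurrences of $s_0^{\pm 1}$ by $s'$ or $s''$ according to a global left-to-right alternation rule, with a matching description of the inverse; well-definedness is argued by mirroring the defining relations on both sides. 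You instead observe that a RAACH is a graph product of cyclic groups, invoke the normal form theorem for such graph products, and replace each $\langle s\rangle$-syllable of a reduced expression by its image under the fixed basepoint-preserving graph isomorphism of vertex-group Cayley graphs, leaving all other syllables alone. This buys a cleaner well-definedness argument --- compatibility with syllable shuffles is immediate from the fact that $s_1,s_2$ are adjacent in $H'$ to exactly the $H$-neighbours of $s$ --- at the price of importing the (standard but nontrivial) uniqueness of reduced expressions up to shuffles, which the paper's elementary argument avoids. Note that the two constructions produce genuinely different bijections (for $t$ not commuting with $s_0$, your map sends $s_0ts_0$ to $s'ts'$ while the paper's sends it to $s'ts''$), both of which are valid graph isomorphisms. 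Two minor points: the amalgamated-product/Bass--Serre paragraph is an under-justified detour that your own normal-form argument supersedes, so it should be dropped rather than patched; and the syllable replacement should be stated as acting on \emph{every} $\langle s\rangle$-syllable of a reduced word, not just one, since a reduced expression may contain several separated by syllables that do not commute with $s$.
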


While we do not utilize this fact in our curvature investigations, it tells us that it suffices to investigate only Cayley graphs of RAACHs with all generators of order $2,3$, since all generators of other orders can be eliminated by specific processes described in the proof of the proposition. Note however, that only the Cayley graphs $G$ and $G'$ in the proposition are isomorphic and not the underlying groups $\Gamma$ and $\Gamma'$.

For our curvature results, we need to introduce one more concept, the \emph{associated pair} $(H^*,w)$ of a RAACH $\Gamma$ with generating set $S$. $H^*$ is a combinatorial graph with vertex set $S^* = \{s,s^{-1}\, \mid \, s \in S \}$. The edges of $H^*$ are determined by $w: S^* \times S^* \to \{0,1,2\}$, which is defined as follows:
\begin{equation}
    w(s,t) = \begin{cases} 1, & \text{if $s$ and $t$ commute and $s \neq t,t^{-1}$,} \\ 1, & \text{if $t = s^{-1}$ and $\ord(s)=4$,} \\
    2, & \text{if $t = s^{-1}$ and $\ord(s)=3$,} \\
    0, & \text{otherwise.}
    \end{cases}
\end{equation}
Two vertices $s,t \in S^*$ are adjacent in $H^*$ if and only if $w(s,t) > 0$. 

In other words, the vertices of $H^*$ are the extension of the generating set $S$ of $\Gamma$ to the corresponding symmetric generating set $S^*$, and edges in $H$ corresponding to $[s,t] = e$ for $s,t \in S$ give rise to at most four weight $1$ edges $\{s^{\pm 1},t^{\pm 1}\}$ in $H^*$. This is in accordance with the fact that $[s,t] =e$ implies $[s^{\pm 1},t^{\pm 1}] = e$. Furthermore, there are edges of weight $1$ or $2$ in $H^*$ between $s \in S$ and $s^{-1}$ if $\ord(s)=4$ or $\ord(s)=3$, respectively.

In the special case of a Right Angled Coxeter Group (RACG) with defining graph $(H,m)$, we have $m \equiv 2$, and the associated pair $(H^*,w)$ consists of the same combinatorial graph $H^* = H$ without new vertices and edges and with trivial edge weights, that is, $w \equiv \mathbbm{1}$. 

Throughout the paper, we denote the set of integers from $1$ to $n$ by $[n]$. 

We have the following explicit curvature results.

\begin{thm}[Ollivier Ricci curvature for RAACHs]
\label{thm:mainraachor}
  Let $\Gamma$ be a RAACH with generating set $S$, defining graph $(H,m)$ and associated pair $(H^*,w)$. Then we have for any $s \in S^*$, representing an edge in $G = \Cay(\Gamma,S)$ incident to the identity $e \in \Gamma$ and representing a vertex in $H^*$,
  $$ \kappa_{LLY}(s) = \frac{a+2\deg_{H^*}(s)}{D} - 2 $$
  with $D = \deg_G(e) = |S^*|$ and 
  $$ a = \begin{cases} 4, & \text{if $\ord(s) \neq 3$,} \\ 3, & \text{if $\ord(s) = 3$.} \end{cases} $$

  In the particular case of $R_2=S$ (that is, $\Gamma$ is a RACG and $H^*=H$), we have for any $s \in S$, representing an edge in $G= \Cay(\Gamma,S)$ incident to $e \in \Gamma$ and representing a vertex in $H$,
  $$ \kappa_{LLY}(s) = \frac{4+2\deg_H(s)}{D} - 2 $$
  with $D = \deg_G(e) = |S|$.
\end{thm}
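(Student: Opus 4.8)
The plan is to apply the M\"unch--Wojciechowski formula to the edge $\{e,s\}$ of $G=\Cay(\Gamma,S)$. Since $G$ is $D$-regular with $D=|S^*|$, its random walk Laplacian is the normalized one, $\Delta f(x)=\frac1D\sum_{t\in S^*}(f(xt)-f(x))$, and I would first record the elementary identity, valid for any $1$-Lipschitz $f$ normalized by $f(e)=0$, $f(s)=1$,
\[
\Delta f(e)-\Delta f(s)=1+\frac1D\sum_{t\in S^*}\bigl(f(t)-f(st)\bigr).
\]
Thus $\kappa_{LLY}(s)=1+\frac1D\,I$ where $I$ is the infimum of $\sum_{t\in S^*}(f(t)-f(st))$ over all such $f$, so everything reduces to evaluating $I$.

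For the lower bound on $I$, I would group the summands by the relation of $t\in S^*$ to $s$: the two diagonal terms $t\in\{s,s^{-1}\}\cap S^*$; the terms with $t\in C(s):=\{t\in S^*:[s,t]=e,\ t\neq s,s^{-1}\}$; and the terms with $t$ in the complementary set $\mathrm{NC}(s)$. The key geometric input is the local structure of the Cayley graph of a RAACH $\Gamma$, regarded as a graph product of cyclic groups: from the normal form theory for graph products one extracts that $st\notin S^*\cup\{e\}$ for every generator $t\neq s^{-1}$ (and $t\neq s$ when $\ord s=2$), that $t^{-1}st$ is never a generator when $[s,t]\neq e$, and that $\langle s\rangle$ is isometrically embedded and cyclic of order $\ord s$, so that $s^{-1}\sim s^2$ holds exactly when $\ord s=4$ (among $\ord s\ge 3$). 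Granting this, for $t\in\mathrm{NC}(s)$ one has $t\sim e$, $st\sim s$ and $t\not\sim st$, whence $f(t)-f(st)\ge-1-2=-3$; for $t\in C(s)$ one moreover has $st=ts\sim t$, forcing $f(t)-f(st)\ge-1$; and the two diagonal terms sum to the constant $1$ if $\ord s\in\{2,3\}$, to something $\ge0$ if $\ord s=4$ (using $s^{-1}\sim s^2$), and to something $\ge-2$ if $\ord s=\infty$ (using $f(s^{-1})\ge-1$ and $f(s^2)\le2$). Summing these, and using $|C(s)|+|\mathrm{NC}(s)|=D-\epsilon$ with $\epsilon=|\{s,s^{-1}\}\cap S^*|\in\{1,2\}$ together with $\deg_{H^*}(s)=\delta+|C(s)|$, where $\delta\in\{0,1,2\}$ is the $w$-weight of the edge $\{s,s^{-1}\}$ in $H^*$, yields $I\ge a+2\deg_{H^*}(s)-3D$ once one checks in each of the four cases that $c_1+3\epsilon-2\delta=a$, where $c_1\in\{1,0,-2\}$ is the constant bounding the diagonal terms. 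Plugging into $\kappa_{LLY}(s)=1+\frac1D I$ gives the lower bound $\frac{a+2\deg_{H^*}(s)}{D}-2$.

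For the matching upper bound I would exhibit an explicit minimizer. On the vertices at distance at most $2$ from the edge $\{e,s\}$ it suffices to set $f(e)=0$, $f(s)=1$, $f\equiv-1$ on $\mathrm{NC}(s)$, $f\equiv2$ on $\{st:t\in\mathrm{NC}(s)\}$, $f\equiv0$ on $C(s)$, $f\equiv1$ on $\{ts:t\in C(s)\}$, and on $\langle s\rangle$ to take $f(s^{-1})=0$, $f(s^2)=1$ when $\ord s=4$, $f(s^{-1})=-1$, $f(s^2)=2$ when $\ord s=\infty$, and $f(s^{-1})=f(s^2)=0$ when $\ord s=3$. The same structural facts show this is $1$-Lipschitz on that set, and a McShane-type extension turns it into a $1$-Lipschitz function on all of $\Gamma$ without affecting $\Delta f(e)-\Delta f(s)$; substituting it into the identity attains the lower bound. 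Finally, the RACG statement is simply the specialization in which every generator has order $2$: then $H^*=H$ with $w\equiv\mathbbm{1}$, so $\deg_{H^*}(s)=\deg_H(s)$, $D=|S|$ and $a=4$.

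The step I expect to be the main obstacle is the geometric input --- knowing precisely which short relations can hold in a RAACH, equivalently which triangles and $4$-cycles appear in its Cayley graph and that $\langle s\rangle$ behaves like its own cyclic group. This is exactly what graph product normal form theory supplies. Once that is in place, the rest is a finite case analysis over $\ord s\in\{2,3,4,\infty\}$ and a routine verification that the displayed test function is $1$-Lipschitz.
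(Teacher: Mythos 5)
Your proposal is essentially correct, and the bookkeeping checks out: with $c_1 \in \{1,1,0,-2\}$, $\epsilon \in \{1,2,2,2\}$ and $\delta \in \{0,2,1,0\}$ for $\ord(s) = 2,3,4,\infty$ respectively, your identity $c_1 + 3\epsilon - 2\delta = a$ holds in all four cases and reproduces the stated formula. Your route, however, is genuinely different from the paper's. The paper works at the idleness value $p = 1/(D+1)$, computes $W_1(\mu_e^p,\mu_s^p)$ by exhibiting a matching transport plan and a matching Kantorovich potential (Lemma \ref{lem:ollgeneral}, which encapsulates the local picture of Figure \ref{OR_figure}), and then converts $\kappa_p$ into $\kappa_{LLY}$ via the piecewise-linearity result of \cite{BCLMP18}; you instead apply the M\"unch--Wojciechowski limit-free formula directly and split $\sum_{t\in S^*}(f(t)-f(st))$ into diagonal, commuting and non-commuting blocks. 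These are dual formulations of the same optimization: your explicit $1$-Lipschitz minimizer is precisely the paper's Kantorovich potential $\phi_0$ extended by McShane, and your term-by-term lower bound plays the role of the paper's transport plan. What your version buys is that the lower bound becomes completely soft --- it uses only adjacency, i.e.\ upper bounds on distances, which hold in any Cayley graph --- so all the group-theoretic content is concentrated in verifying that the test function is $1$-Lipschitz (equivalently, that the prescribed values do not collide and that the various pairs really are at distance $2$ or $3$). That verification is where you appeal to normal-form theory for graph products; the paper instead proves exactly the needed combinatorial facts from scratch as Proposition \ref{prop:cycles} (the classification of $3$-, $4$- and $5$-cycles through $e$), which is equivalent to the distance conditions you need and which you could cite in place of the external normal-form machinery. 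One small imprecision: your stated fact that $st \notin S^*\cup\{e\}$ for every generator $t \neq s^{-1}$ fails for $t=s$ when $\ord(s)=3$ (then $s^2=s^{-1}\in S^*$); this does not damage the argument, since you treat $\ord(s)=3$ separately in the test function, but the structural lemma should be stated with that exclusion.
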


\begin{thm}[Bakry-\'Emery curvature for certain RAACHs] 
\label{thm:mainraachbe}
Let $\Gamma$ be a RAACH with generating set $S$, defining graph $(H,m)$ and associated pair $(H^*,w)$. Let $G = \Cay(\Gamma,S)$ and $D = \deg_G(e) = |S^*|$. We enumerate the elements in the $1$-sphere $S_1(e)$ around the identity $e \in \Gamma$ as follows:
$$ S_1(e) = S^* = \{t_1,\dots,t_D\}, $$
with $\ord(t_i) = 3$ for $i \in [\ell]$ and $\ord(t_j) \neq 3$ for $j \in [D] \setminus [\ell]$. Then the curvature matrix at $e \in \Gamma$ is given by
$$ A(e) = (2-D) \Id_D + J - \Delta_{H^*} + \frac{1}{2} \diag(\underbrace{1,\dots,1}_{\ell},\underbrace{0,\dots,0}_{D-\ell}), $$
where $J$ is the $D \times D$ all-one matrix, $\Delta_{H^*}$ is Laplacian of the weighted graph $(H^*,w)$ with $m \equiv 1$, defined in \eqref{eq:wlap}, and $\diag(a_1,\dots,a_n)$ is the $n \times n$ diagonal matrix with diagonal entries $a_1,\dots,a_n$.
Moreover, we have the following:
\begin{itemize}
    \item[(a)] If $R_3 = \emptyset$ and $D \ge 2$, we have
    $$ 
    K(e) = 2 - D + \lambda_2(-\Delta_{H^*}).
    $$
    \item[(b)] If $R_3 = S$ and $D \ge 4$, we have
    $$ K(e) = \frac{5}{2} - D + \lambda_2(-\Delta_{H^*}). $$
\end{itemize}
\end{thm}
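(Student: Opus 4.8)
The plan is to obtain the matrix $A(e)$ directly from the definition of Bakry-\'Emery curvature and then read off its smallest eigenvalue in the two special cases. Since $K(e)=\lambda_{\min}(A(e))$ depends only on the incomplete $2$-ball $\mathring{B}_2(e)$ and $G$ is $D$-regular, the first task is to describe $\mathring{B}_2(e)$ for a RAACH. Here $S_1(e)=S^*$; the only edges inside $S_1(e)$ are the triangles $\{e,s,s^{-1}\}$ with $\ord(s)=3$ (because then $s\cdot s=s^{-1}$); and every $z\in S_2(e)$, being a reduced product of two generators, has either exactly one neighbour in $S_1(e)$ (call $z$ \emph{tree-type}: this happens when $z=st$ with $s,t$ non-commuting, or $z=s^2$ with $\ord(s)=\infty$) or exactly two (call $z$ \emph{square-type}: either $z=st=ts$ for a commuting pair $s\neq t^{\pm1}$, with neighbours $s,t$, or $z=s^2=s^{-1}s^{-1}$ with $\ord(s)=4$, with neighbours $s,s^{-1}$). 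One also checks that no two vertices of $S_1(e)$ have more than one common neighbour in $S_2(e)$. All of this follows from the solution of the word problem in RAAGs/RACGs extended by the order relations, but since only elements of length $\le 2$ occur it can also be verified directly. In particular the square-type vertices of $S_2(e)$ are in bijection with the weight-$1$ edges of $H^*$, and the order-$3$ triangles with the weight-$2$ edges of $H^*$.

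Next I would expand the curvature form. Normalising $f(e)=0$, write $\Gamma_2 f(e)=\tfrac12\Delta\Gamma f(e)-\Gamma(f,\Delta f)(e)$ as a quadratic form in $f_1=(f(s))_{s\in S_1(e)}$ and $f_2=(f(z))_{z\in S_2(e)}$ (by locality these are all the relevant values), say $f_1^\top P f_1+2f_1^\top R f_2+f_2^\top T f_2$, and note $\Gamma f(e)=\tfrac12\|f_1\|^2$. Using the description of $\mathring{B}_2(e)$ above, $T$ turns out to be diagonal with entry $\tfrac14$ at each tree-type $z$ and $\tfrac12$ at each square-type $z$, and $R_{sz}=-\tfrac12$ when $z\sim s$ and $0$ otherwise; hence $K(e)=\lambda_{\min}(A(e))$ with $A(e)=2(P-RT^{-1}R^\top)$, and since each $f(z)$ is coupled only to $f$ at the one or two $S_1$-neighbours of $z$, the Schur complement splits into a sum of small local contributions. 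Collecting everything: the ``$\tfrac{D}{2}\sum_s f(s)^2$'' and ``$\tfrac12(\Delta f(e))^2$'' parts of $\Gamma_2$ give $(2-D)\,\Id_D+J$; the Schur corrections at the square-type vertices, together with the triangle contributions that sit inside the $S_1$-block, assemble into $-\Delta_{H^*}$, the off-diagonal weights $1$ and $2$ arising respectively from a square-type $z$ (via $-2R_{sz}R_{tz}/T_{zz}$) and from an order-$3$ triangle edge; and the remaining energy along each order-$3$ triangle edge $\{e,s,s^{-1}\}$, which is not removed by the Schur complement, produces the extra $\tfrac12\diag(1,\dots,1,0,\dots,0)$. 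This yields the stated formula for $A(e)$. I expect this bookkeeping — keeping track of all the constants and, above all, handling the order-$3$ generators, whose companion $\{s,s^{-1}\}$ is a triangle through $e$ rather than a square — to be the main obstacle.

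Finally I would extract the eigenvalues. Since $J=\mathbbm{1}\mathbbm{1}^\top$ and $\mathbbm{1}\in\ker\Delta_{H^*}$, the matrices $J$ and $\Delta_{H^*}$ commute, so they diagonalise simultaneously. In case (a), $\ell=0$ and $A(e)=(2-D)\Id_D+J-\Delta_{H^*}$: it has eigenvalue $2$ on $\mathbbm{1}$ and eigenvalues $2-D+\lambda_j(-\Delta_{H^*})$, $j=2,\dots,D$, on $\mathbbm{1}^\perp$. In case (b), $\ell=D$ and the additional $\tfrac12\Id_D$ merely shifts these to $\tfrac52$ and $\tfrac52-D+\lambda_j(-\Delta_{H^*})$. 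Hence $K(e)=\min\{2,\,2-D+\lambda_2(-\Delta_{H^*})\}$ in case (a) and $K(e)=\min\{\tfrac52,\,\tfrac52-D+\lambda_2(-\Delta_{H^*})\}$ in case (b), so it remains to show $\lambda_2(-\Delta_{H^*})\le D$. In case (a) every edge weight of $H^*$ is $1$, so $\operatorname{tr}(-\Delta_{H^*})=2|E(H^*)|\le D(D-1)$ and therefore $(D-1)\lambda_2(-\Delta_{H^*})\le\sum_{j\ge2}\lambda_j(-\Delta_{H^*})=\operatorname{tr}(-\Delta_{H^*})\le D(D-1)$ (using $D\ge2$). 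In case (b) the graph $H^*$ is invariant under the involution $s\leftrightarrow s^{-1}$; decomposing $\R^{S^*}$ into its $\pm1$-eigenspaces, $-\Delta_{H^*}$ restricts on the $+1$-part to $2(-\Delta_H)$ and on the $-1$-part to $\diag\big(4+2\deg_H(s)\big)_{s\in S}$, whose entries are $\le 4+2(|S|-1)=D$ unless $H$ is complete, in which case the $+1$-part already gives $\lambda_2(-\Delta_{H^*})=2\lambda_2(-\Delta_H)=2|S|=D$. The hypothesis $D\ge4$ (i.e.\ $|S|\ge2$) is precisely what rules out the single exceptional case $\Gamma=\Z/3$, where $H^*$ is a weight-$2$ edge, $\lambda_2(-\Delta_{H^*})=4>2=D$, and $K(e)=\tfrac52\neq\tfrac52-D+\lambda_2(-\Delta_{H^*})$.
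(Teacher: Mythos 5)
Your overall route coincides with the paper's: describe $\mathring{B}_2(e)$ via the classification of short cycles, arrive at the stated matrix $A(e)$, diagonalise $J$ and $\Delta_{H^*}$ simultaneously, and reduce both cases to the inequality $\lambda_2(-\Delta_{H^*})\le D$. Two sub-arguments differ. First, you re-derive $A(e)$ from the $\Gamma_2$-form by an explicit Schur complement, whereas the paper quotes the general identity \eqref{eq:AxLapl} from \cite{Sic20,Sic21,CKLP22} and only has to prove $\Delta_{H^*}=2\Delta_{S_1(e)}+2\Delta_{S_1'(e)}$ (Lemma \ref{lem:assocpairsdetinc2balls}) plus the out-degree count $d_i^+=D-2$ or $D-1$; your constants ($T_{zz}=d_z^-/4$, $R_{sz}=-\tfrac12$, correction $-2R_{sz}R_{tz}/T_{zz}=-1$ per square) are consistent with that formula, so this part is sound, just more laborious. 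Second, for $\lambda_2(-\Delta_{H^*})\le D$ the paper uses domination by $K_D$ in case (a) and an explicit test function in case (b); your trace argument in (a) is correct, and your symmetric/antisymmetric decomposition in (b) is a genuine structural alternative. Your remark that $\Gamma=\Z/3$ ($D=2$) violates formula (b) is correct and does explain the hypothesis $D\ge 4$.

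There is, however, an arithmetic slip in case (b): since $D=|S^*|=2|S|$ there, the diagonal entries of the antisymmetric block satisfy $4+2\deg_H(s)\le 4+2(|S|-1)=D+2$, not $\le D$; the bound $\le D$ requires $\deg_H(s)\le|S|-2$, so your dichotomy ``entries $\le D$ unless $H$ is complete'' fails already when a single vertex of $H$ has full degree $|S|-1$. The repair is immediate and is in fact contained in your own fallback clause: for $|S|\ge 2$ the symmetric block alone supplies an eigenvector orthogonal to the constants with eigenvalue $2\lambda_2(-\Delta_H)\le 2\lambda_2(-\Delta_{K_{|S|}})=2|S|=D$, hence $\lambda_2(-\Delta_{H^*})\le D$ with no reference to the antisymmetric block at all. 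With that correction the argument is complete.
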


\begin{rmk} \label{rmk:nonnormnorm}
    The curvatures in Theorem \ref{thm:mainraachbe} are based on the choice of the non-normalized Laplacian. If one chooses the normalized Laplacian instead, the corresponding curvatures rescale by a factor $1/D$, since $G$ is $D$-regular.
\end{rmk}

We illustrate the above results with the following examples.

\begin{ex}[Cayley graphs of Coxeter groups] Coxeter groups $\Gamma$ are of the form
$$ \Gamma = \langle S\, \mid\, (s_is_j)^{m_{ij}} = e\,\, \text{for $i,j \in [D]$} \rangle $$
with generating set $S = \{s_1,\dots,s_D\}$, $m_{ii} = 1$ for $i \in [D]$ (that is, all generators are of order $2$), and $m_{ij} \in \{2,3,\dots,\infty\}$ for $i,j \in [D]$, $i \neq j$. Note that $m_{ij}=2$ means that the generators $s_i$ and $s_j$ commute. The corresponding Coxeter diagram has $D$ vertices representing the elements of $S$ with an edge between $s_i$ and $s_j$ if and only if $s_i$ and $s_j$ do not commute. The Cayley graph $G = \Cay(\Gamma,S)$ of a Coxeter group has only cycles of even length and, since our curvatures are local concepts, any cycles of length $\ge 6$ do not change the curvature values. Therefore, it suffices to restrict our considerations to Coxeter groups with $m_{ij}$ only taking values $2$ or $\infty$, that is, RACGs. We note that the defining graph $H=H^*$ of a Coxeter group $\Gamma$ is the complement of the corresponding Coxeter diagram, which we denote therefore by $H^c$. As stated in \cite[Theorem 9.6]{CLP20}, the Bakry-\'Emery curvature of the Cayley graph of a general Coxeter group with Coxeter diagram $H^c$ is given by
\begin{equation} \label{eq:BEcurvCox} 
K(e) = 2-\lambda_{\rm{max}}(-\Delta_{H^c}). 
\end{equation}
Since 
$\Delta_H + \Delta_{H^c} = J - D \Id_D$,
we conclude that
$$ \lambda_{\rm{max}}(-\Delta_{H^c}) = D - \lambda_2(-\Delta_H), $$
showing that \eqref{eq:BEcurvCox} agrees with the curvature formula in Theorem \ref{thm:mainraachbe}(a) in the case of RACGs. We like to mention that another independent Bakry-\'Emery curvature description of Cayley graphs of Coxeter groups was given in \cite{Sic20,Sic21}. Bakry-\'Emery curvatures for Hasse diagrams of Bruhat orders of finite Coxeter groups can be found in \cite{Sic22}, and for Bruhat graphs of finite Coxeter groups in \cite{Sic23}.
\end{ex}

\begin{ex}[Regular triangle trees] Let $H$ be the disjoint union of $D_0 \ge 2$ isolated vertices $S = \{s_1,\dots,s_{D_0}\}$ without edges and $m(s) = 3$ for all $s \in S$. The Cayley graph $G = \Cay(\Gamma,S)$ of the RAACH $\Gamma$ with defining graph $(H,m)$ has a treelike structure with triangles as building blocks (see Figure \ref{fig:triangle-graphs}). In fact, the only cycles of $G$ are triangles, every edge of $G$ is contained in a (unique) triangle, and every vertex has degree $2D_0$. 
The associated pair $(H^*,w)$ is the disjoint union of $D_0$ copies of $K_2$ with all edges of weight $2$. Since $H^*$ is not connected, we have $\lambda_2(-\Delta_{H^*}) = 0$. Consequently, all edges of $G$ have constant Ollivier Ricci curvature
$$ \kappa_{LLY} = \frac{7}{2D_0} - 2 $$
and constant Bakry-\'Emery curvature
$$ K = \frac{5}{2D_0} - 2, $$
where we used the normalized Laplacian for comparison reasons (see Remark \ref{rmk:nonnormnorm}).
 
\begin{figure}[h]
\includegraphics[width=0.49\textwidth]{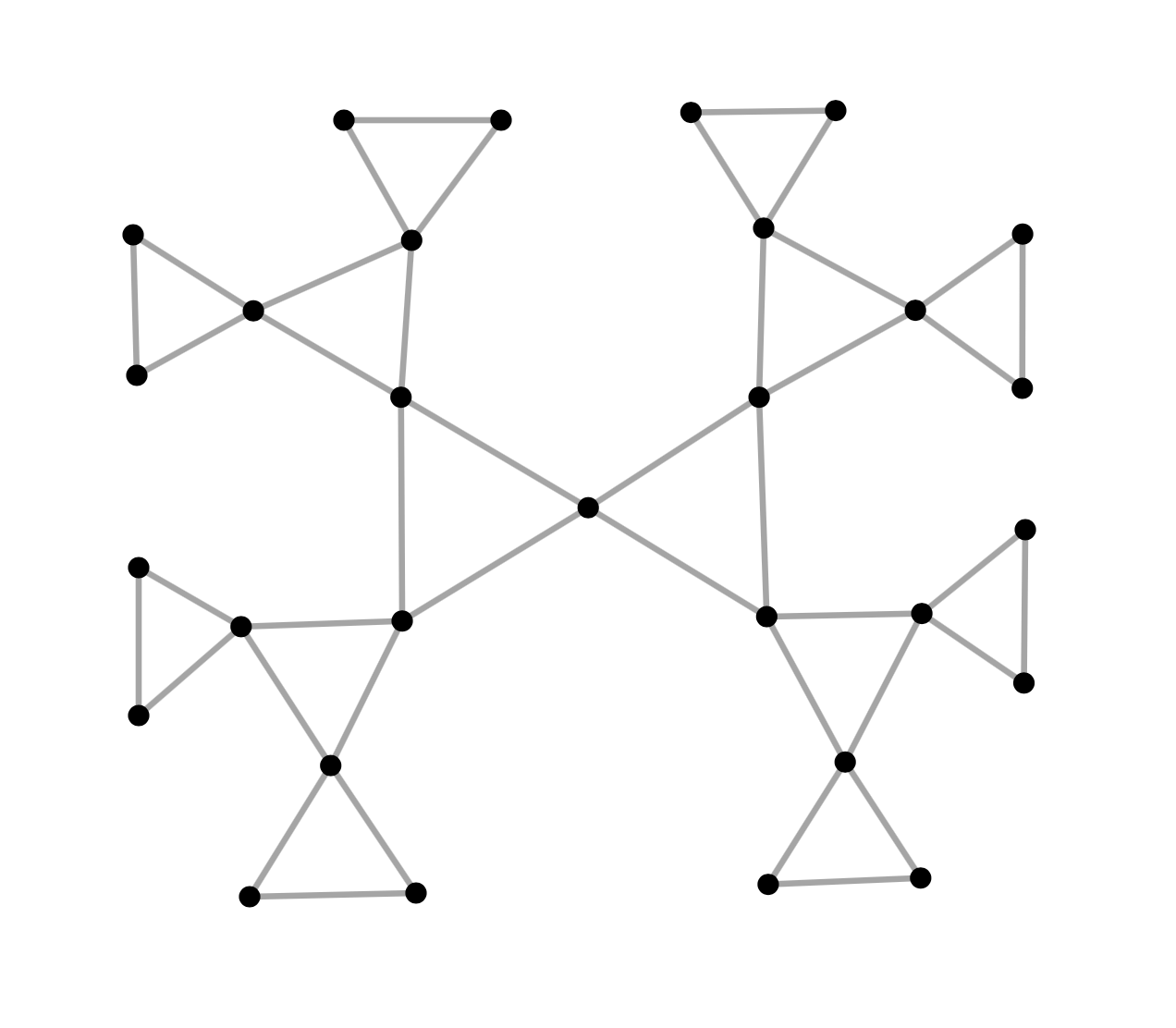}
\includegraphics[width=0.49\textwidth]{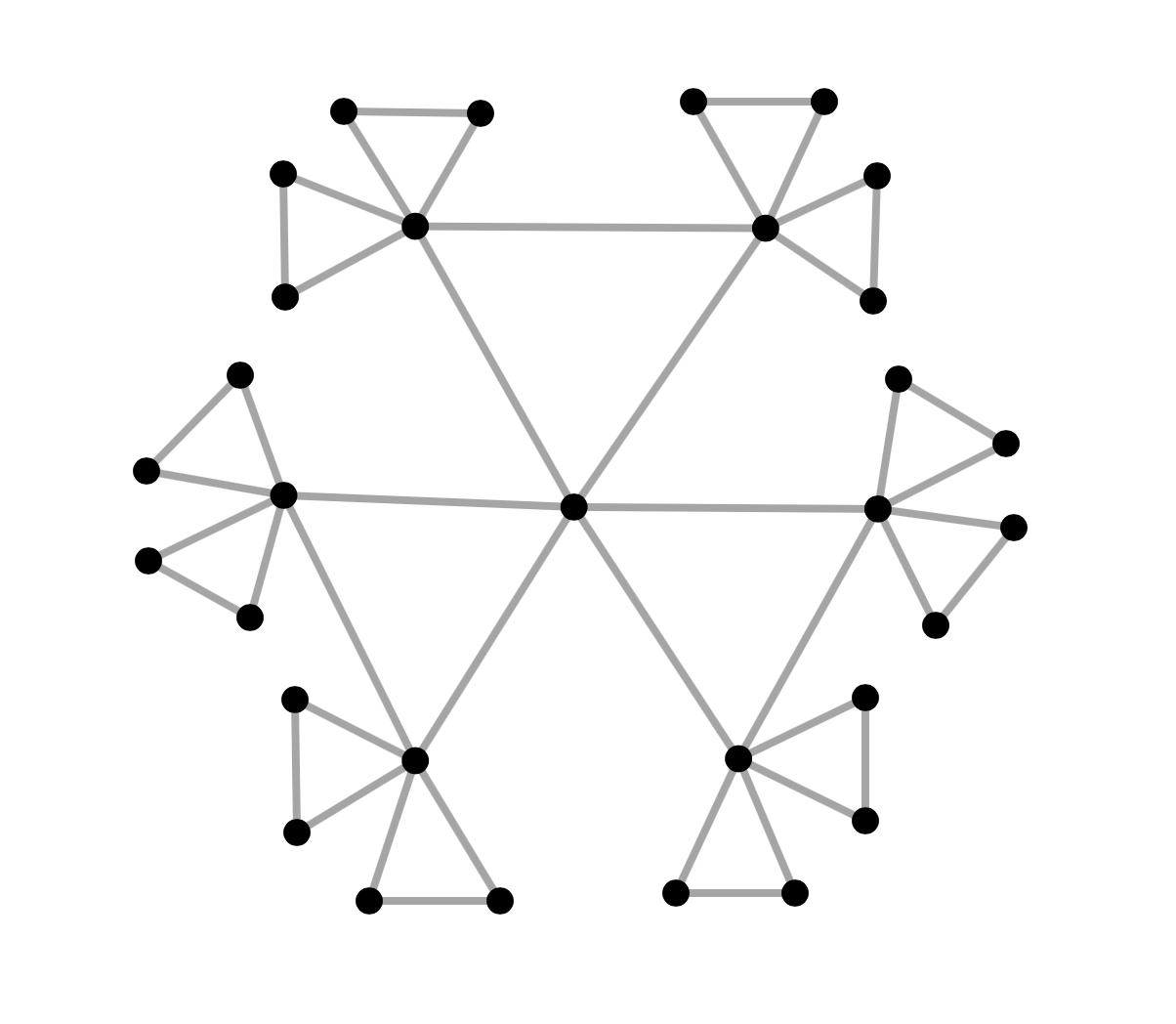}
\caption{Local structures of $2D_0$-regular triangle trees 
for $D_0=2$ and $D_0=3$.}
\label{fig:triangle-graphs}
\end{figure}
\end{ex}

\begin{ex}[Regular Trees] \label{ex:regtrees} The Cayley graphs of RAACHS $\Gamma$ with generating sets $$ S = R_2 = \{s_1,\dots,s_D\} \,\, (D \ge 2) $$
or 
$$ S = R_\infty = \{s_1,\dots,s_{D_0}\} \,\, (D_0 \ge 1) \quad \text{and $D=2D_0$} $$
and defining graphs $(H,m)$ without edges are $D$-regular trees. They have constant Ollivier Ricci curvature $\kappa_{LLY} = \frac{4}{D} -2$ and constant Bakry-\'Emery curvature $K = \frac{2}{D}-1$, based on the normalized Laplacian (see Remark \ref{rmk:nonnormnorm}). Adding edges into $H$ means also adding edges into $H^*$ and adding commutator relations into $C_H$. It is straightforward to see from the formulas in Theorem \ref{thm:mainraachor} that Ollivier Ricci curvature does not decrease under this process. It follows also from Theorem \ref{thm:mainraachbe}(a) that Bakry-\'Emery curvature does not decrease under the addition of commutator relations into $C_H$, since 
$\lambda_2(-\Delta_{H^*}) = 0$ for $H^*$ without edges, and since the addition of any edge $\{ t_i,t_j\}$ (of weight $1$) into $H^*$  with vertex set $S^* = \{t_1,\dots,t_D\}$ translates into adding a special non-negative matrix $M_{ij}$ into the matrix representation of $-\Delta_{H^*}$. The matrix $M_{ij}$ has non-zero entries only in the positions $(i,i)$, $(i,j)$, $(j,i)$ and $(j,j)$, where it is of the form 
$$ \begin{pmatrix} 1 & -1 \\ -1 & 1 \end{pmatrix}, $$
which is obviously a positive semidefinite matrix.
\end{ex}

In Example \ref{ex:regtrees}, we observed that the addition of commutator relations in certain RAACHs does not decrease the curvatures of their Cayley graphs. Our final result states such a fact regarding the addition of general relators in arbitrary Cayley graphs (see also \cite[Conjecture 9.3]{CLP20}). Since the precise formulation of this result is subtle, we need to first have a deeper look in the concept of Cayley graphs and to extend some of our notation. Let $\Gamma$ be finitely presented as $\Gamma = \langle S \mid R \rangle$. We refer to $S$ as the \emph{alphabet} of $\Gamma$. 
The set $R$ is a finite set of words in $S^* = \{s,s^{-1} \mid s \in S\}$. The elements of $R$ are called \emph{relators} in the presentation of  $\Gamma$. The group $\Gamma$ is then given by equivalence classes $[w]_R$ of words $w$ with letters in $S^*$ modulo the relators in $R$ and $\Cay(\Gamma,S)$ is the corresponding Cayley graph. The group $\Gamma$ can be understood as the quotient of the free group $F_S$ in $S$ with no relations other than $s s^{-1} = s^{-1} s =e$ by the normal closure of the set $R$. The normal closure of $R$ is the smallest normal subgroup of $F_S$ containing $R$.  
In the presentation of $\Gamma$ and its Cayley graph, we used a slight abuse of notation, since the elements of $S$ are not elements of $\Gamma$, but only their equivalence classes. We will use the notation $S_\Gamma$ for the set of \emph{non-trivial} equivalence classes $[s]_R$ corresponding to $s \in S$. The set $S_\Gamma$ may have a smaller cardinality than $S$, and its elements are \emph{generators} of the group $\Gamma$. The set $S_\Gamma^*$ is defined similarly as the set of \emph{non-trivial} equivalence classes $[s]_R$ corresponding to $s \in S^*$. Strictly speaking, we would need to write $\Gamma = \langle S_\Gamma \mid R \rangle$ and $\Cay(\Gamma,S_\Gamma \rangle$ for the group presentation and its associated Cayley graph. We will use the simpler notation if the context is clear and only use the more precise notation $\Gamma = \langle S_\Gamma | R \rangle$ and $\Cay(\Gamma,S_\Gamma)$ when we want to avoid confusion. 

Let $\Gamma' = \langle S \mid R' \rangle$ be second group with the same alphabet and a larger set $R'$ of relators, that is $R' \supset R$. Since the elements of $\Gamma$ (and of $\Gamma'$) are equivalence classes of words defined via the relators, we have a canonical map $\Phi: \Gamma \to \Gamma'$, mapping the equivalence class $[w]_R$ to the corresponding equivalence class $[w]_{R'}$. This map induces a $1$-Lipschitz map (with respect to the graph distances) on the corresponding Cayley graphs $G = \Cay(\Gamma,S)$ and $G' = \Cay(\Gamma',S)$, which we also denote by $\Phi$. Note that the larger set of relators in $R'$ may lead to various identifications of different group elements under the map $\Phi$, and different generators of $\Gamma$ may be identified in $\Gamma'$ or even collapse to the identity. 
Therefore, it is difficult to understand the nature of this map $\Phi$ geometrically, and even its local behaviour is non-trivial. Note also that, by the very definition, we collect only \emph{non-trivial} equivalence classes $[s]_{R'}$ in the sets $S_{\Gamma'}$ and $S_{\Gamma'}^*$. A natural guess is that the curvatures do not decrease under this process, that is, 
$$ K(x') \ge K(x) $$ 
for vertices $x$ and $x'$ in the Cayley graphs $G$ and $G'$ and $$ \kappa_{LLY}(x',y') \ge \kappa_{LLY}(x,y)$$ 
for corresponding edges $x \sim_G y$ and $x' \sim_{G'} y'$ under the map $\Phi$. The following example shows, however, that the curvature monotonicity assumption is not always true in this original form.

\begin{ex}
    Let $S = \{a,b\}$ and $R = \{ a^4, b^{-1}a^2 \}$. Then the group $\Gamma = \langle S\, |\, R \rangle$ is cyclic of order $4$ (with generator $a$), and the Cayley graph $\Cay(\Gamma,S)$ is isomorphic to the complete graph $K_4$ with constant Bakry-\'Emery curvature $K(x) = 3$ and constant Ollivier Ricci curvature $\kappa_{LLY}(x,y)=4$. Setting $R' = R \cup \{ a^2 \}$ and $\Gamma' = \langle S\, |\, R' \rangle$, the corresponding Cayley graph $\Cay(\Gamma',S)$ is isomorphic to $K_2$ with 
    $K(x') = 2$ and $\kappa_{LLY}(x',y') = 2$. These curvature values can be easily verified with the graph curvature calculator (see \cite{CKLLS19} and the freely accessible web-app at \url{https://www.mas.ncl.ac.uk/graph-curvature/}). In this case both curvature types decrease under the transition from $\Gamma$ to $\Gamma'$.
\end{ex}

Let us have a closer look at this example. The equivalence classes of the generators $[a]_R, [a^{-1}]_R, [b]_R = [b^{-1}]_R$ in $\Gamma$ are different and non-trivial. However, this is not true for their $\Phi$-images. The equivalence class $[b]_{R'}$ is trivial (the collapse of a generator) and the equivalence classes $[a]_{R'}$ and $[a^{-1}]_{R'}$ coincide (merging of generators). It turns out that the violation of our above curvature monotonicity assumption is not caused by the collapse of generators but by the merging of generators. Since there is a one-one correspondence between the generators and the edges incident to any Cayley graph vertex, we take care of this merging of generators by introducing a weighting scheme on our Cayley graphs, and by assigning increased weights on edges which are merged, and to use
weighted Laplacians and their corresponding weighted Bakry-\'Emery and Ollivier Ricci curvatures. In the case of our Cayley graph $G = \Cay(\Gamma,S)$, we choose the trivial vertex measure $m = \mathbbm{1}_\Gamma$ and edge weight functions $w$ which are invariant under the $\Gamma$-left action. This means that $w$ is determined by a function $w_0: S^*_\Gamma \to (0,\infty)$ satisfying $w_0(s) = w_0(s^{-1})$ for all $s \in S_\Gamma$ via the assignment $w(g,gs) := w_0(s)$ for all $g \in \Gamma$ and $s \in S^*_\Gamma$. A special choice of edge weights on $G$ is the trivial choice, given by $w_0 \equiv \mathbbm{1}$. we have the following curvature monotonicity results for weighted Cayley graphs:

\begin{thm}[Curvature monotonicity under addition of relators] \label{thm:addrel}
Let $\Gamma = \langle S\, |\, R \rangle$ and $\Gamma' = \langle S\, |\, R' \rangle$ be two finitely presented groups with $R' \supset R$ and $\Phi: G \to G'$ be the canonical associated $1$-Lipschitz map on their Cayley graphs $G = \Cay(\Gamma,S)$ and $G' = \Cay(\Gamma',S)$. Let $(m,w)$ and $(m',w')$ be weighting schemes on $G$ and $G'$ with trivial vertex measures $m,m'$ and edge weights associated to the functions $w_0: S^*_\Gamma \to (0,\infty)$ and $w_0': S^*_{\Gamma'} \to (0,\infty)$, respectively. 
Assume that $w_0$ and $w_0'$ satisfy the following for all $s' \in S^*_{\Gamma'}$:
\begin{equation} \label{eq:w0w0'}
    w_0'(s') = \sum_{s \in S^*_\Gamma: \Phi(s) = s'} w_0(s).
\end{equation}
Then the graphs $(G,m,w)$ and $(G',m',w')$ have constant weighted Bakry-\'Emery curvatures $K_G$ and $K_{G'}$ satisfying
$$ K_{G'} \ge K_G. $$
Moreover, for every edge $\{x',y'\}\in E'$ in $G'$ and every edge $\{x,y\}\in E$ in $G$ with $\{x',y'\} = \Phi(\{x,y\})$, the corresponding weighted Ollivier Ricci curvatures satisfy
$$ \kappa_{LLY}(x',y') \ge \kappa_{LLY}(x,y). $$
\end{thm}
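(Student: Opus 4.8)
The plan is to work entirely on the level of the Cayley graphs near the identity and to exploit the left-$\Gamma$-invariance of both weighting schemes, which makes the curvatures constant, so that it suffices to compare $K_G$ with $K_{G'}$ at the identities $e\in\Gamma$ and $e'\in\Gamma'$, and to compare $\kappa_{LLY}$ on a single edge $\{e,s\}$ with its image $\{e',\Phi(s)\}$. The central observation is that the map $\Phi$ identifies the neighbours $S^*_\Gamma$ of $e$ with the neighbours $S^*_{\Gamma'}$ of $e'$ via a surjection, and the weight condition \eqref{eq:w0w0'} says precisely that the weight of an edge at $e'$ is the sum of the weights of the edges at $e$ mapping to it. In probabilistic terms, if $\mu_e$ and $\mu_{e'}$ denote the random walk measures of the weighted Laplacians, then $\Phi_*\mu_e = \mu_{e'}$: the weight condition is exactly what makes the random walk on $G$ push forward to the random walk on $G'$. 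More generally, since $\Phi$ is a graph homomorphism and $w_0,w_0'$ satisfy \eqref{eq:w0w0'}, one checks that $\Phi_*\mu_x = \mu_{\Phi(x)}$ for every $x$, and that for any $f\colon V'\to\R$ one has $\Delta_G(f\circ\Phi) = (\Delta_{G'}f)\circ\Phi$, i.e. $\Phi$ intertwines the two weighted Laplacians.

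For the Ollivier curvature I would use the Münch--Wojciechowski limit-free formula quoted in the introduction,
$$ \kappa_{LLY}(x,y) = \inf_{\substack{f\in 1\text{-}\mathrm{Lip}\\ f(y)-f(x)=1}} \Delta f(x) - \Delta f(y), $$
now applied to the weighted random walk Laplacians. Given an edge $\{e,s\}$ in $G$ with image $\{e',s'\}$ in $G'$, take any admissible competitor $g$ for $\kappa_{LLY}(e',s')$, i.e. $g$ is $1$-Lipschitz on $G'$ with $g(s')-g(e')=1$. Then $f:=g\circ\Phi$ is $1$-Lipschitz on $G$ because $\Phi$ is $1$-Lipschitz, and $f(s)-f(e)=g(s')-g(e')=1$, so $f$ is admissible for $\kappa_{LLY}(e,s)$; by the intertwining relation $\Delta_G f(e)-\Delta_G f(s) = \Delta_{G'}g(e')-\Delta_{G'}g(s')$. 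Taking the infimum over all such $g$ gives $\kappa_{LLY}(e,s)\le\kappa_{LLY}(e',s')$, which is the desired inequality (the direction is correct: pulling back competitors can only enlarge the feasible set on the source, hence lower its infimum). One subtlety to record carefully: $\Phi$ need not be injective on the $1$-sphere, so several neighbours of $e$ may map to the same neighbour of $e'$, but this is harmless for pulling back a function and for the Lipschitz estimate, and \eqref{eq:w0w0'} ensures the Laplacian identity still holds.

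For the Bakry--Émery curvature I would argue via the curvature-dimension inequality $\mathrm{CD}(K,\infty)$ directly: a weighted graph has $K_G\ge K$ iff $\Gamma_2(f)(x)\ge K\,\Gamma(f)(x)$ for all $f$ and all $x$, where $\Gamma$ and $\Gamma_2$ are the Bakry--Émery operators of the weighted Laplacian. Since $\Gamma_2$ only involves the Laplacian and its iterate on the incomplete $2$-ball, and $\Phi$ restricted to $\mathring B_2(e)$ is a weighted-graph morphism onto $\mathring B_2(e')$ compatible with the Laplacians (here one must check that no ``new'' short loops in $G'$ spoil the picture — but any extra edges in $\mathring B_2(e')$ arising from collapsed relators only add non-negative contributions, cf. the matrix $M_{ij}$ computation in Example \ref{ex:regtrees}), we get that for $g\colon V'\to\R$, $\Gamma_2^G(g\circ\Phi)(e) = \Gamma_2^{G'}(g)(e')$ and $\Gamma^G(g\circ\Phi)(e)=\Gamma^{G'}(g)(e')$. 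Hence if $G$ satisfies $\mathrm{CD}(K_G,\infty)$ then, testing the inequality on $G'$ with functions of the form $g\circ\Phi$... this is the point that needs the most care, since not every function on $\mathring B_2(e)$ is of the form $g\circ\Phi$, so one cannot simply transport the optimizer backwards. Instead I would use the eigenvalue description $K(e)=\lambda_{\min}(A(e))$: the intertwining $\Phi^*\Delta_{G'}=\Delta_G\Phi^*$ shows that $A(e')$ is, up to the non-negative correction coming from the merging of generators (which is exactly the content of \eqref{eq:w0w0'}), the compression of $A(e)$ to the subspace of $\Phi$-pulled-back functions, and compressions together with an added positive semidefinite term can only raise $\lambda_{\min}$; hence $K_{G'}=\lambda_{\min}(A(e'))\ge\lambda_{\min}(A(e))=K_G$.

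The main obstacle I anticipate is bookkeeping around the merging and collapsing of generators: making the statement ``$A(e')$ is a PSD-perturbed compression of $A(e)$'' precise requires carefully setting up the linear map between the relevant function spaces on $\mathring B_2(e)$ and $\mathring B_2(e')$, checking that the quadratic form defining $\Gamma_2$ transforms correctly under pull-back even when distinct generators merge (so that a rank-one block of the form $\left(\begin{smallmatrix}1&-1\\-1&1\end{smallmatrix}\right)$ appears, as in Example \ref{ex:regtrees}) and when a generator collapses (so that an edge simply disappears from $G'$ and the corresponding coordinate is dropped). Once that linear-algebra dictionary is in place, both curvature monotonicity statements follow from the elementary facts that (i) restricting a min over $1$-Lipschitz functions to a pulled-back subfamily can only increase it, and (ii) compressing a symmetric matrix to a subspace and/or adding a positive semidefinite matrix can only increase its smallest eigenvalue.
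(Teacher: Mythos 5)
Your Ollivier Ricci argument is essentially the paper's proof: the weight condition \eqref{eq:w0w0'} (together with left-invariance of the weighting schemes) gives the Laplacian intertwining $\Delta_G(g\circ\Phi)=(\Delta_{G'}g)\circ\Phi$, pullbacks of $1$-Lipschitz competitors are admissible competitors, and the infimum in the M\"unch--Wojciechowski formula on $G$ runs over a superset of the pulled-back family, hence is smaller. That part is correct and complete.

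For Bakry-\'Emery you assemble exactly the right ingredients --- the identities $\boldsymbol{\Gamma}^G(g\circ\Phi)(x)=\boldsymbol{\Gamma}^{G'}(g)(x')$ and $\boldsymbol{\Gamma}_2^G(g\circ\Phi)(x)=\boldsymbol{\Gamma}_2^{G'}(g)(x')$, which follow from applying the Laplacian intertwining at every vertex, not just at $e$ --- but then you talk yourself out of the conclusion. The worry that ``not every function on $\mathring B_2(e)$ is of the form $g\circ\Phi$'' is a red herring: you do not need to transport an optimizer from $G$ to $G'$, only from $G'$ to $G$. To verify $\mathrm{CD}(K_G,\infty)$ at $x'$ you must check $\boldsymbol{\Gamma}_2'(g)(x')\ge K_G\,\boldsymbol{\Gamma}'(g)(x')$ for every $g\colon V'\to\R$; but $\boldsymbol{\Gamma}_2'(g)(x')=\boldsymbol{\Gamma}_2(g\circ\Phi)(x)\ge K_G\,\boldsymbol{\Gamma}(g\circ\Phi)(x)=K_G\,\boldsymbol{\Gamma}'(g)(x')$, since $\mathrm{CD}(K_G,\infty)$ holds on $G$ for \emph{all} functions, in particular for the pullbacks. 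Equivalently, $K^{G'}(x')=\inf\{\boldsymbol{\Gamma}_2'g(x')\mid \boldsymbol{\Gamma}'g(x')=1\}\ge\inf\{\boldsymbol{\Gamma}_2f(x)\mid\boldsymbol{\Gamma}f(x)=1\}=K^G(x)$, because the left-hand infimum is taken over the (possibly smaller) family of pullbacks. This is precisely the asymmetry that makes the inequality point the right way, and it is how the paper closes the argument. Your fallback via $K(e)=\lambda_{\min}(A(e))$ and a ``PSD-perturbed compression'' of the curvature matrix is not developed to the point of being a proof --- you yourself flag the bookkeeping around merging and collapsing generators as an unresolved obstacle --- and it is unnecessary: the $\boldsymbol{\Gamma}_2$-level argument you started with already finishes the job with no case analysis of short cycles in $G'$.
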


Let us finish this subsection with two remarks about the above theorem: 
\begin{itemize}
\item[(a)] In the setting of the Theorem \ref{thm:addrel}, there exists, for every edge $\{x',y'\} \in E$ in $G'$, at least one edge in $G$ which is mapped to $\{x',y'\}$ under $\Phi$. Hence the sum in \eqref{eq:w0w0'} is non-empty. In the case $w_0 \equiv \mathbbm{1}$, the weights of all edges in $G$ are equal to $1$, and the adapted edge weights induced by $w_0'$ on $G'$ are also integer-valued, where $w_0'(s')$ counts the number of generators $s$ in $S^*_\Gamma$, which are merged into $s' \in S^*_{\Gamma'}$ under the map $\Phi$.
\item[(b)] The theorem confirms Conjecture 9.3 of \cite{CLP20} in the case of infinite dimension $\mathcal{N} = \infty$. The conjecture there was formulated carefully to avoid any merging and collapsing of edges. It is easy to verify that the proof of our theorem in Section 5 works equally well for finite dimension $\mathcal{N}  < \infty$. In this paper, however, we chose to not introduce Bakry-\'Emery curvature for finite dimension, in order to keep the presentation a bit simpler.   
\end{itemize}

\subsection{Outline}

In Section \ref{sec:basicsRAACHs} we discuss and prove some interesting combinatorial properties 
of RAACHs and their Cayley graphs. A detailed introduction into Ollivier Ricci and Bakry-\'Emery curvature is given in Section \ref{sec:curvatures}. The main curvature results for RAACHs (Theorems \ref{thm:mainraachor} and \ref{thm:mainraachbe}) are proved in Section \ref{sec:curvRAACHs}. Finally, monotonicity of weighted curvatures on Cayley graphs under addition of relators (Theorem \ref{thm:addrel}) is proved in Section \ref{sec:curvmon}. 

\section{Fundamental facts about RAACHs}
\label{sec:basicsRAACHs}

Proposition \ref{prop:elimR4Rinf} is a straightforward consequence of the following two propositions, which allow successive removal of all elements in the sets $R_4$ and $R_\infty$ of a RAACH $\Gamma$ without changing the combinatorics of its Cayley graph $\Cay(\Gamma,S)$.

\begin{prop} \label{prop:R4empty}
  Let $\Gamma$ be a RAACH with defining graph $(H,m)$, non-empty set $R_4$, and $H^*=(S^*,E^*)$ be the associated pair. Fix an element $s_0 \in R_4$ and define another defining graph $(H',m')$ with vertex set $S'$ as follows:
  \begin{itemize}
  \item $S' = S \setminus \{s_0\} \cup \{s',s''\}$;
  \item $m'(s) = \begin{cases} m(s) &\text{if $s \in S \setminus\{s_0\}$,} \\ 2 &\text{if $s \in \{s',s''\}$;} \end{cases}$
  \item all edges $\{s,t\}$ in $H$ with $s,t \neq s_0$ are also edges in $H'$, all edges $\{s_0,t\}$ in $H$ give rise to pairs of edges $\{s',t\}, \{s'',t\}$ in $H'$, and a new edge $\{s',s''\}$ is introduced in $H'$.
  \end{itemize}
  Let $\Gamma'$ be the RAACH to the new defining graph $(H',m')$ and $((H')^*,w')$ be the corresponding associated pair. Then there is an isomorphism between $(H^*,w)$ and $((H')^*,w')$ and between the Cayley graphs $G = \Cay(\Gamma,S)$ and $G' = \Cay(\Gamma',S')$.
\end{prop}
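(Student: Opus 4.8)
The plan is to present both $\Gamma$ and $\Gamma'$ as amalgamated free products over a common subgroup, and then to glue together one ``local'' isomorphism of $4$-cycles with the identity on the common part so as to obtain the Cayley graph isomorphism. Set $T=S\setminus\{s_0\}$ and let $A=N_H(s_0)\subseteq T$ be the set of generators commuting with $s_0$. The only defining relations of $\Gamma$ involving $s_0$ are $s_0^4=e$ and $[s_0,a]=e$ for $a\in A$, so, writing $\Gamma_T=\langle T\rangle$ and $\Gamma_A=\langle A\rangle$ for the parabolic subgroups of $\Gamma$ (which embed as the graph products on the induced subgraphs $H-s_0$ and $H[A]$, by the standard normal form theory for graph products),
$$ \Gamma \;=\; \Gamma_T \ast_{\Gamma_A}\bigl(\Gamma_A\times\langle s_0\mid s_0^4\rangle\bigr). $$
Since the new generators $s',s''$ have order $2$, commute with one another, and have neighbour set exactly $A$ inside $T$, the same computation gives
$$ \Gamma' \;=\; \Gamma_T \ast_{\Gamma_A}\bigl(\Gamma_A\times(\Z/2\Z)^2\bigr), $$
with the same base $\Gamma_T$, the same edge group $\Gamma_A$, and the same inclusion $\Gamma_A\hookrightarrow\Gamma_T$. (The degenerate cases $A=\emptyset$ or $T=\emptyset$ are allowed; the latter is the base case $\Z/4\Z$ versus $(\Z/2\Z)^2$.)

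The isomorphism $\phi\colon(H^*,w)\to((H')^*,w')$ is a direct finite check: send $s_0\mapsto s'$, $s_0^{-1}\mapsto s''$, and fix $T^*$ pointwise. From the definition of the weight function, the weight-$1$ edge $\{s_0,s_0^{-1}\}$ goes to the weight-$1$ edge $\{s',s''\}$, the weight-$1$ edges $\{s_0^{\pm1},a^{\pm1}\}$ with $a\in A$ go to the weight-$1$ edges $\{s',a^{\pm1}\}$ and $\{s'',a^{\pm1}\}$, and every edge and non-edge inside $T^*$ is preserved; hence $\phi$ is an isomorphism of weighted graphs.

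For the Cayley graphs, the key local fact is that $\Cay(\Z/4\Z,\{\pm1\})\cong C_4\cong\Cay((\Z/2\Z)^2,\{e_1,e_2\})$ via the bijection $\beta\colon 0\mapsto 0,\ 1\mapsto e_1,\ 2\mapsto e_1+e_2,\ 3\mapsto e_2$, which fixes the base vertex. Taking Cartesian products with $\Cay(\Gamma_A,A)$ promotes $\beta$ to a graph isomorphism $\theta$ from $\Cay(\Gamma_A\times\Z/4\Z,\,A^*\cup\{s_0^{\pm1}\})$ onto $\Cay(\Gamma_A\times(\Z/2\Z)^2,\,A^*\cup\{s',s''\})$ which carries left $\Gamma_A$-cosets to left $\Gamma_A$-cosets and restricts to the identity on the distinguished coset $\Gamma_A\times\{e\}$. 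Now recall that the Cayley graph of an amalgamated product $G_1\ast_B G_2$ with generating set $S_1\cup S_2$, where $S_i\cap B=S_0$ generates $B$, carries the standard tree-of-spaces structure over the Bass--Serre tree: disjoint copies of $\Cay(G_1,S_1)$ sit over the $G_1$-cosets, disjoint copies of $\Cay(G_2,S_2)$ over the $G_2$-cosets, copies of $\Cay(B,S_0)$ over the $B$-cosets, and two vertex-spaces are glued along their common $B$-coset copy. Feeding the two decompositions above into this picture, and using that the $\Gamma_T$-side and the edge group $\Gamma_A$ (with all their coset copies and inclusions) literally coincide while $\theta$ identifies the remaining sides compatibly with $\Gamma_A$-cosets and fixing the distinguished $\Gamma_A$-copy, one gets a tree isomorphism of the two Bass--Serre trees respecting all gluing data, and lifts it to a graph isomorphism $\Psi\colon\Cay(\Gamma,S)\to\Cay(\Gamma',S')$. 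Equivalently, $\Psi$ acts on the amalgamated-product normal form of $\gamma\in\Gamma$ by replacing each $(\Gamma_A\times\Z/4\Z)$-syllable by the corresponding $(\Gamma_A\times(\Z/2\Z)^2)$-syllable determined by $\theta$ and leaving the $\Gamma_T$-syllables alone; it is a bijection by uniqueness of normal forms and bijectivity of $\beta$, it preserves the $T^*$-labelled edges (which lie inside single $\Gamma_T$-copies on which $\Psi$ acts compatibly) and it sends the $s_0^{\pm1}$-labelled edges to genuine edges (which lie inside single $(\Gamma_A\times\Z/4\Z)$-copies on which $\Psi$ acts through the graph isomorphism $\theta$).

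The main obstacle is exactly this assembly step: verifying that $\theta$ together with the identity on the $\Gamma_T$-part really patches to a single well-defined global graph isomorphism. Concretely, one has to track how right multiplication by a generator interacts with the amalgamated-product normal form — especially the degenerate moves in which a syllable becomes trivial and is absorbed into $\Gamma_A$, or two neighbouring syllables of the same type collapse after a cancellation — and check that $\Psi$ still matches adjacencies there. This is routine Bass--Serre bookkeeping, but it is where the care is concentrated; the other ingredients (the weighted-graph isomorphism $\phi$, the $4$-cycle isomorphism $\beta$, and the parabolic-subgroup facts) are finite checks or standard citations.
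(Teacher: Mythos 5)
Your proposal is correct, but it reaches the conclusion by a genuinely different route from the paper. The paper's proof is an elementary, hands-on word argument: it defines $\Phi$ by normalising every power of $s_0$ to a string of the letter $s_0$ and then substituting $s'$ and $s''$ alternately from left to right, gives an analogous explicit recipe for $\Phi^{-1}$, and argues that relator moves in the two presentations mirror each other so that both maps are well defined; the underlying combinatorial heart is the same as yours, since the alternating substitution $s_0^k\mapsto s's''s'\cdots$ is precisely your $4$-cycle isomorphism $\beta$ between $\Cay(\Z/4\Z,\{\pm1\})$ and $\Cay((\Z/2\Z)^2,\{e_1,e_2\})$. You instead package that local isomorphism structurally: the splittings $\Gamma=\Gamma_T\ast_{\Gamma_A}(\Gamma_A\times\Z/4\Z)$ and $\Gamma'=\Gamma_T\ast_{\Gamma_A}(\Gamma_A\times(\Z/2\Z)^2)$ are correct (every edge of $H$ lies in $H[T]$ or in $H[A\cup\{s_0\}]$, and the parabolic subgroups embed by the normal form theory for graph products), and gluing $\mathrm{id}_{\Gamma_T}$ with $\theta=\mathrm{id}\,\square\,\beta$ over the tree of coset spaces does produce the desired Cayley graph isomorphism, since $\theta$ permutes the $\Gamma_A$-cosets and fixes the distinguished one, and the indices $[\Gamma_A\times\Z/4\Z:\Gamma_A]=[\Gamma_A\times(\Z/2\Z)^2:\Gamma_A]=4$ match. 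What your approach buys is that well-definedness and bijectivity come for free from uniqueness of amalgamated-product normal forms, where the paper's version relies on an informal ``relator moves can be mirrored'' check; what it costs is the reliance on graph-product and Bass--Serre machinery and an assembly step you only sketch. That assembly is genuinely routine and would go through, and its level of detail is comparable to the corresponding step in the paper's own proof, so I do not regard it as a gap; your verification of the isomorphism $(H^*,w)\cong((H')^*,w')$ agrees with the paper's.
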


In short, any generator in $R_4$ can be replaced by a pair of commuting generators in $R_2$ without changing the combinatorics of the associated pairs and Cayley graphs.

\begin{proof}
  In the transition from $H$ to $H'$ the vertex $s_0 \in R_4 \subset S$ is replaced by the two new vertices $s',s'' \in R_2' \subset S'$. Since $s_0 \in R_4$, we have $w(s_0,s_0^{-1}) = 1$, and by introducing a new edge $\{s',s''\}$ in $H'$ we have also $w'(s',s'')=1$ and we require that $s'$ and $s''$ commute in $\Gamma'$. Moreover, every edge $\{s_0,t\}$ in $H$ gives rise to the edges $\{s_0^{\pm 1},t^{\pm 1}\}$ in $H^*$ and, by introducing the edges $\{s',t\}$ and $\{s'',t\}$ in $H'$, we also have the edges $\{s',t^{-1}\}$ and $\{s'',t^{-1}\}$ in $(H')^*$ if $t^{-1} \neq t$. All other edges are unaffected. These arguments show that we have a canonical graph isomorphism $H^* \to (H')^*$ by preserving $S^* \setminus \{s_0,s_0^{-1}\} = (S')^* \setminus \{s',s''\}$ and mapping $s_0$ and $s_0^{-1}$ in $S^*$ to $s'$ and $s''$ in $(S')^*$, respectively. Moreover, the edge weights $w$ and $w'$ agree under this isomorphism. 

  Next we describe the graph isomorphism $\Phi: \Gamma \to \Gamma'$ between the Cayley graphs $G$ and $G'$. For $\gamma \in \Gamma$, we choose a word $w$ with letters in $S^*$ representing $\gamma$. We then replace any power of $s_0$ (also negative ones) by a string of the simple letter $s_0$ representing the same power (for example, $s_0^{-1}$ is replaced by $s_0s_0s_0$). 
  Finally, by parsing the new word from left to right, we create a word $w'$ by alternatingly replacing any appearance of $s_0$ by $s'$ and $s''$ (ignoring all other letters in between) and starting with $s'$ from the left. Then $\Phi(\gamma)$ is the element in $\Gamma'$ represented by the word $w'$.  
  For example, if $\gamma = s_0 s_1 s_0^2 s_2^{-1}s_0$,
  we obtain $w'$ by the following process
  $$ w=s_0^{-1} s_1 s_0^2 s_2^{-1} s_0 \quad \mapsto \quad s_0 s_0 s_0 s_1 s_0 s_0 s_2^{-1} s_0 \quad \mapsto \quad s' s'' s' s_1 s'' s' s_2^{-1} s'' = w'. $$
  It is easy to see that all manipulations of $w$ via the powers and commutators in the presentation of $\Gamma$ can be mirrored by corresponding manipulations of $w'$ via the powers and commutators in the presentation of $\Gamma'$. This shows that the map $\Phi$ is well-defined. 
  
  The description of the inverse $\Phi^{-1}$ is a bit more complicated: Given $\gamma' \in \Gamma'$, choose a word $w'$ with letters in $(S')^*$ representing $\gamma'$. We first remove any even power of $s'$ and replace any odd power of $s'$ by $s'$ itself in $w'$, and we do the same with $s''$. In the second step we replace succcessively each occurence of $s',s''$ by $s_0,s_0^{-1}$ in a certain way by parsing through the word from left to right. If $s'$ appears first in the word, we replace it by $s_0$, if it is $s''$, we replace it by $s_0^{-1}$. Thereafter, we continue with replacements of $s',s''$ by $s_0,s_0^{-1}$ following this rule: in the case of two consecutive $s',s'$ or $s'',s''$ (potentially with other letters in between), we replace them by different powers of $s_0^{\pm 1}$, and in the case of $s',s''$ or $s'',s'$ we replace them by the same powers of $s_0^{\pm 1}$. Then $\Phi^{-1}(\gamma')$ is the element in $\Gamma$ represented by the resulting word $w$. Here is an example for that process:
  $$ w' = s_1 (s')^{-1} s_2 (s'')^2 s' s_1 s'' s' s_3 s' \quad \mapsto \quad s_1 s' s_2 s' s_1 s'' s' s_3 s' \quad \mapsto s_1 s_0 s_2 s_0^{-1} s_1 s_0^{-1} s_0^{-1} s_3 s_0 = w. $$
It is straighforward to verify that $\Phi^{-1} \circ \Phi = \Id_\Gamma$ and $\Phi \circ \Phi^{-1} = \Id_{\Gamma'}$. This shows that $\Phi$ is bijective. Moreover, by construction, $\Phi$ maps adjacent vertices in $G$ to adjacent vertices in $G'$ and is therefore a graph isomorphism. Note that $\Phi$ cannot a group isomorphism, since the groups $\Gamma$ and $\Gamma'$ are not isomorphic.
\end{proof}
    
\begin{prop} \label{prop:Rinfempty}
  Let $\Gamma$ be a RAACH with defining graph $(H,m)$, non-empty set $R_\infty$, and $H^*=(S^*,E^*)$ be the associated pair. Fix an element $s_0 \in R_\infty$ and define another defining graph $(H',m')$ with vertex set $S'$ as follows:
  \begin{itemize}
  \item $S' = S \setminus \{s_0\} \cup \{s',s''\}$;
  \item $m'(s) = \begin{cases} m(s) &\text{if $s \in S \setminus\{s_0\}$,} \\ 2 &\text{if $s \in \{s',s''\}$;} \end{cases}$
  \item all edges $\{s,t\}$ in $H$ with $s,t \neq s_0$ are also edges in $H'$, and all edges $\{s_0,t\}$ in $H$ give rise to pairs of edges $\{s',t\}, \{s'',t\}$ in $H'$.
  \end{itemize}
  Let $\Gamma'$ be the RAACH to the new defining graph $(H',m')$ and $((H')^*,w')$ be the corresponding associated pair. Then there is an isomorphism between $(H^*,w)$ and $((H')^*,w')$ and between the Cayley graphs $G = \Cay(\Gamma,S)$ and $G' = \Cay(\Gamma',S')$.
\end{prop}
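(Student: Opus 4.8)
\emph{Proof proposal.} The plan is to follow the same two-step strategy as in the proof of Proposition \ref{prop:R4empty}, the only essential difference being that the two replacement generators $s',s''$ now do \emph{not} commute in $\Gamma'$ (no new edge $\{s',s''\}$ is added to $H'$). Consequently the subgroup $\langle s',s''\rangle\le\Gamma'$ is the infinite dihedral group $D_\infty$ rather than $\Z/2\times\Z/2$, and this is precisely what matches the subgroup $\langle s_0\rangle\cong\Z\le\Gamma$ generated by the original order-$\infty$ element. (Viewing RAACHs as graph products of cyclic groups makes these embeddings transparent, but the word-level argument below does not formally require this.)

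First I would record the isomorphism of associated pairs. Since $s_0\in R_\infty$ we have $w(s_0,s_0^{-1})=0$, i.e.\ $s_0$ and $s_0^{-1}$ are non-adjacent in $H^*$; and since no edge $\{s',s''\}$ is introduced in $H'$, we also have $w'(s',s'')=0$. Every edge $\{s_0,t\}$ of $H$ gives rise to the edges $\{s_0^{\pm1},t^{\pm1}\}$ of $H^*$, and correspondingly the new edges $\{s',t\},\{s'',t\}$ of $H'$ give rise to $\{s',t^{\pm1}\},\{s'',t^{\pm1}\}$ in $(H')^*$, while all remaining edges are unaffected. Hence the map fixing $S^*\setminus\{s_0,s_0^{-1}\}=(S')^*\setminus\{s',s''\}$ and sending $s_0\mapsto s'$, $s_0^{-1}\mapsto s''$ is a weight-preserving graph isomorphism $(H^*,w)\to((H')^*,w')$.

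Next I would build the graph isomorphism $\Phi\colon G\to G'$ by a syntactic word substitution. The computation to be carried out once and for all is that $\Cay(\Z,\{s_0\})$ and $\Cay(D_\infty,\{s',s''\})$ are both bi-infinite paths, identified by the bijection sending the reduced alternating word $s's''s'\cdots$ of length $n$ to $s_0^n$, the reduced alternating word $s''s's''\cdots$ of length $n$ to $s_0^{-n}$, and $e$ to $e$. Concretely, given $\gamma\in\Gamma$ represented by a word $w$ with letters in $S^*$, I would first expand every power $s_0^m$ in $w$ into $|m|$ copies of $s_0^{\operatorname{sign}(m)}$ (unlike in Proposition \ref{prop:R4empty}, one cannot make all powers positive here), and then parse the result from left to right while maintaining a state that alternately designates $s'$ or $s''$ as the ``current'' replacement: reading $s_0$ outputs the currently designated letter and then flips the state, while reading $s_0^{-1}$ first flips the state and then outputs the now-designated letter; all other letters are copied verbatim. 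Then $\Phi(\gamma)$ is the element of $\Gamma'$ represented by the output word. The inverse $\Phi^{-1}$ reverses this: delete $s'$ in even-length runs and reduce odd-length runs to a single $s'$ (and likewise for $s''$), then replace the occurrences of $s',s''$ by powers $s_0^{\pm1}$ by parsing left to right, reusing the same power after an $s's''$ or $s''s'$ pattern and using the opposite power after an $s's'$ or $s''s''$ pattern (possibly with other letters in between).

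Finally I would verify that $\Phi$ is well defined and a graph isomorphism. Well-definedness amounts to checking that each defining relation of $\Gamma$ is sent to a relation of $\Gamma'$: the order relations involve only generators in $S\setminus\{s_0\}$ (there is no order relation on $s_0\in R_\infty$) and are preserved verbatim; a commutator relation $[s_0,t]=e$ for a neighbour $t$ of $s_0$ is sent to a product of the relations $[s',t]=[s'',t]=e$, which hold in $\Gamma'$ because $t$ is a neighbour of both $s'$ and $s''$; and cancellations using $s_0s_0^{-1}=e$ correspond to $s's'=e$ or $s''s''=e$. Symmetrically $\Phi^{-1}$ is well defined, one checks $\Phi^{-1}\circ\Phi=\Id_\Gamma$ and $\Phi\circ\Phi^{-1}=\Id_{\Gamma'}$, so $\Phi$ is a bijection; and since a single generator step $s_0^{\pm1}$ becomes a single generator step $s'$ or $s''$ (and conversely), $\Phi$ preserves adjacency and is a graph isomorphism. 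As in Proposition \ref{prop:R4empty}, $\Phi$ is not a group isomorphism, since $\Gamma$ and $\Gamma'$ are not isomorphic as groups. I expect the main technical obstacle to be the bookkeeping in the parsing rules for $\Phi$ and $\Phi^{-1}$, in particular checking that the alternating substitution is compatible with the non-commutation of $s'$ and $s''$ and with interspersed generators — which causes no trouble precisely because every neighbour of $s_0$ becomes a neighbour of both $s'$ and $s''$.
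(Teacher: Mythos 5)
Your proposal is correct and follows essentially the same route as the paper's proof: the same weight-preserving identification of $(H^*,w)$ with $((H')^*,w')$ sending $s_0\mapsto s'$, $s_0^{-1}\mapsto s''$, and the same left-to-right alternating substitution defining $\Phi$ and $\Phi^{-1}$ (your state-machine formulation reproduces exactly the paper's rule that consecutive equal powers of $s_0$ become alternating letters while consecutive opposite powers become repeated letters). Your explicit relation-by-relation well-definedness check and the observation that both $\Cay(\Z,\{s_0\})$ and $\Cay(D_\infty,\{s',s''\})$ are bi-infinite paths are slightly more detailed than, but entirely consistent with, the paper's argument.
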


In short, any generator in $R_\infty$ can be replaced by a pair of non-commuting generators in $R_2$ without changing the combinatorics of the associated pairs and Cayley graphs.

\begin{proof}
  The proof of this proposition is similar to the proof of the previous proposition. In the transition from $H$ to $H'$ the vertex $s_0 \in R_\infty \subset S$ is replaced by the two new vertices $s',s'' \in R_2' \subset S'$. Since $s_0 \in R_\infty$, we have $w(s_0,s_0^{-1}) = 0$, which matches  $w'(s',s'')=0$, meaning that $s'$ and $s''$ do not commute in $\Gamma'$. The arguments for all other edges are as in the previous proof and we conclude that 
  we have a canonical isomorphism $H^* \to (H')^*$ by preserving $S^* \setminus \{s_0,s_0^{-1}\} = (S')^* \setminus \{s',s''\}$ and mapping $s_0$ and $s_0^{-1}$ in $S^*$ to $s'$ and $s''$ in $(S')^*$, respectively. Moreover, the edge weights $w$ and $w'$ agree under this isomorphism. 

  Next we describe the map $\Phi: \Gamma \to \Gamma'$. For $\gamma \in \Gamma$, we choose a word $w$ with letters in $S^*$ representing $\gamma$. We then replace any positive power of $s_0$ by a string of the simple letter $s_0$ and any negative power of $s_0$ by a string of the letter $s_0^{-1}$.  
  In the second step we parse through the word from left to right, and if the first occurrence of one of the letters $s_0^{\pm 1}$ is
  $s_0$, we replace it by $s'$, and if it is $s_0^{-1}$, we replace it by $s''$. For later replacements to obtain a new word $w'$, we follow this rule: two consecutive $s_0,s_0$ or $s_0^{-1},s_0^{-1}$ (possibly with other letters in between) are replaced by an alternating pair $s',s''$ or $s'',s'$ (with the same letters in between), and two consecutive $s_0^{\pm 1},s_0^{\mp 1}$ with different powers (possibly with other letters in between) are replaced by the pair $s',s'$ or $s'',s''$ (note that these pairs do not need to cancel each other out since there may be other letters in between). Then $\Phi(\gamma)$ is the element represented by the word $w'$. Here is an example illustrating this replacement process:  
  $$ w = s_0^{-2} s_1 s_0^2 s_2 s_0 \quad \mapsto \quad s_0^{-1} s_0^{-1} s_1 s_0 s_0 s_2 s_0 \quad \mapsto \quad s''s's_1 s's''s_2 s' = w'. $$
  
  As in the previous proof, it is easy to see that all manipulations of $w$ via the powers and commutators in the presentation of $\Gamma$ can be mirrored by corresponding manipulations of $w'$ via the powers and commutators in the presentation of $\Gamma'$, showing that $\Phi$ is well-defined. 
  
  The description of the inverse $\Phi^{-1}$ is as follows: Given $\gamma' \in \Gamma'$, choose a word $w'$ with letters in $(S')^*$ representing $\gamma'$. We first remove any even power of $s'$ and replace any odd power of $s'$ by $s'$ itself in $w'$, and we do the same with $s''$. The second step starts with finding the first occurrence (from the left) of one of $s',s''$ in the word, and if it is $s'$, we replace it by $s_0$, and if it is $s''$, we replace it by $s_0^{-1}$. Then any pair of consecutive $s_0,s_0$ or $s_0^{-1},s_0^{-1}$ (possibly with letters in between) are replaced by alternating pairs $s',s''$ or $s'',s'$, and any pair of consecutive $s_0^{\pm 1}, s_0^{\mp 1}$ with different powers are replaced by pairs $s',s'$ or $s'',s''$. Then $\Phi^{-1}(\gamma')$ is the element in $\Gamma$ represented by the resulting word $w$. Let us again illustrate this replacement process by an example:
  $$ w' = (s')^{-1}s_1s's''s_2^2s'(s'')^2 \quad \mapsto s's_1s's''s_2^2s' \quad \mapsto \quad s_0s_1s_0^{-1}s_0^{-1}s_2^2s_0^{-1} = w. $$
  As in the previous proof, we conclude that $\Phi$ is bijective and maps adjacent vertices in $G$ to adjacent vertices in $G'$ and is therefore a graph isomorphism.
\end{proof}

In our next result, we describe short cycles in the Cayley graph of a RAACH.

\begin{prop} \label{prop:cycles}
  Let $\Gamma$ be a RAACH with defining graph $(H,m)$ and $S^* = \{s,s^{-1}\, \mid \ s \in S\}$. Then all $3$-, $4$- and $5$-cycles in $G=\Cay(\Gamma,S)$ have the following form:
  \begin{itemize}
      \item[(i)] Any $3$-cycle containing the identity $e \in \Gamma$ is of the form $s,s^2,s^3$ with $s^3=e$ for some $s \in S^*$ with $\ord{s}=3$.
      \item[(ii)] Any $4$-cycle containing $e$ is of the form $s,s^2,s^3,s^4$ with $s^4=e$ for some $s \in S^*$ with $\ord{s}=4$ or of the form $s,st,t,e$ for $s,t \in S^*$, $[s,t] = e$ and $t \neq s,s^{-1}$.
      \item[(iii)] For any $5$-cycle $$s_1, \,\, s_1s_2, \,\, s_1s_2s_3, \,\, s_1s_2s_3s_4,\,\, s_1s_2s_3s_4s_5 $$ containing $e$ with $s_1,s_2,s_3,s_4,s_5 \in S^*$ and $s_1s_2s_3s_4s_5=e$, there must exist two commuting elements $s,t \in S^*$ with $t \neq s,s^{-1}$ and $\ord(t) =3$, such that $\{s_1,s_2,s_3,s_4,s_5\} = \{s^{\pm 1},t^{\pm 1}\}$ and if $s_1 = s_5$, then $\ord(s_1)=3$.
 \end{itemize}
\end{prop}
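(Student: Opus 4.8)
The plan is to analyze cycles through $e$ by translating them into identities among generators and then exploiting the structure of RAACHs as quotients of graph products. A closed $k$-cycle through $e$ corresponds to a word $s_1 s_2 \cdots s_k = e$ in $\Gamma$ with each $s_i \in S^*$ and each partial product $s_1 \cdots s_j \neq e$ for $1 \le j \le k-1$, and additionally all partial products distinct (so that we get an embedded cycle, not a degenerate one). The key tool is the normal form theory for graph products of cyclic groups: each vertex group is $\langle s \mid s^{m(s)} \rangle$ (cyclic of order $2$, $3$, $4$, or $\Z$), and a reduced word is one admitting no cancellation of adjacent syllables in the same vertex group after shuffling commuting syllables past each other; reduced words represent $e$ only if they are empty. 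I would first record this as a lemma (or cite the standard Green thesis / graph product normal form), and note that ``syllable length'' is a well-defined invariant.

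First I would handle (i): if $s_1 s_2 s_3 = e$ is an embedded $3$-cycle, the word $s_1 s_2 s_3$ must reduce to the empty word, so there is cancellation; since the cycle is embedded no two of the $s_i$ are mutually inverse generators directly, so the cancellation must go through a power relation $s^{m(s)}=e$, forcing $s_1 = s_2 = s_3 =: s$ (up to the shuffle equivalence, but three letters give no room to shuffle) with $s^3 = e$, hence $\ord(s) = 3$ and the vertices are $s, s^2, s^3 = e$. For (ii), a word $s_1 s_2 s_3 s_4 = e$ of syllable length $4$ reduces to empty either (a) entirely within one vertex group $\langle s\rangle$ with $s^4 = e$, giving the first form $s, s^2, s^3, s^4$; or (b) by pairing $s_i$ with $s_j^{-1}$ after shuffling, which (tracking which pairs can be shuffled adjacent, and ruling out $j = i+1$ by embeddedness) forces $s_2 = s_1^{-1}$ or $s_1$ commutes with $s_2$ and $s_3 = s_1^{-1}$, $s_4 = s_2^{-1}$; the embeddedness condition $s_1 \neq s_1 s_2 s_3 = s_2^{-1}$ etc.\ then pins down the form $s, st, t, e$ with $[s,t]=e$ and $t \neq s^{\pm 1}$. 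The main obstacle is (iii): here syllable length $5$ is odd, so a pure power relation can only contribute if some generator has order $3$ (eating three equal syllables) or order $\infty$/other (contributing none), and the remaining two syllables must pair off as $u, u^{-1}$; I would argue that after shuffling, the length-$5$ word must split as a $\langle t\rangle$-block contributing $t, t, t$ with $t^3 = e$ interleaved with $s, s^{-1}$ (for a commuting $s$), so that $\{s_1,\dots,s_5\} = \{s^{\pm1}, t^{\pm1}\}$; the genuinely fiddly part is checking all interleaving patterns of the three $t$'s and two $s^{\pm1}$'s against the embeddedness requirement (no partial product equal to $e$ or repeating), and verifying the boundary claim that if $s_1 = s_5$ then $\ord(s_1) = 3$ — which should follow because $s_1 = s_5$ together with $s_1 \cdots s_5 = e$ gives $s_1 s_2 s_3 s_4 = s_1^{-1}$, i.e.\ $s_1^2 = (s_2 s_3 s_4)^{-1}$ lies in the complementary factor, forcing $s_1^2$ to be trivial or the two sides to commute, and a short case check then yields $s_1 \in R_3$.

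I would organize the writing so that (ii) and (iii) share the shuffle-reduction bookkeeping: introduce a normal-form word for the product $s_1 \cdots s_k$, observe it must be empty, and then enumerate the minimal reduction sequences. A clean way to control the shuffling is to use the fact (provable directly for these small lengths, or quotable from graph-product theory) that a non-empty reduced word of syllable length $k \le 5$ cannot equal $e$, together with the observation that each elementary reduction either merges two syllables in a common vertex group (using the commutator relations to make them adjacent) or kills a syllable via $s^{m(s)} = e$. For $k \le 5$ one can simply track the ``reduction tree'' by hand, and the embedded-cycle hypothesis eliminates the degenerate branches. I expect steps (i) and the power-relation branch of (ii) to be essentially immediate; the real work is the careful case analysis in (iii), especially confirming that no interleaving of $\{s, s^{-1}, t, t, t\}$ other than the stated ones produces an embedded $5$-cycle, and that embeddedness is actually consistent in the stated cases.
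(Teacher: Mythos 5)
Your overall strategy matches the paper's: translate an embedded $k$-cycle through $e$ into a word $s_1\cdots s_k = e$ with pairwise distinct, nontrivial partial products, constrain which generators can occur, and enumerate. The difference is in the key tool. You invoke the normal form theorem for graph products of cyclic groups (reduced words of positive syllable length are nontrivial), which is legitimate here since a RAACH is exactly such a graph product. The paper gets by with something strictly weaker and more elementary: for each $s\in S$ there is a retraction homomorphism $\Gamma\to\Z/m(s)\Z$ killing all other generators, so in any word representing $e$ the exponents of $s$ sum to a multiple of $m(s)$; hence every generator occurring in the word occurs at least twice, which already caps the number of distinct generators at one (for $k=3$) or two (for $k=4,5$) and forces the occurrence pattern $3+2$ in case (iii), whence $\ord(t)=3$. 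After that, both routes reduce to the same hand enumeration of interleavings against the embeddedness constraints, and that enumeration is where the paper spends most of its effort. What your heavier machinery buys is a cleaner justification of steps like ``$s^2t^2=e$ with $s,t$ in distinct vertex groups forces $s^2=t^2=e$''; what the paper's approach buys is self-containedness.

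The one substantive shortfall is that for (iii) you have only described the case analysis, not carried it out: you still need to check every placement of the three $t$'s and the two $s^{\pm1}$'s (the paper does this by cyclically shifting $s$ to the front and reducing to the two relations $sts^{-1}tt=e$ and $stts^{-1}t=e$, each of which yields that $s$ and $t$ commute), and to run the $s_1=s_5$ argument to its conclusion: if $s_1=s_5\in\{s^{\pm1}\}$, the exponent-sum constraint forces $\ord(s)=2$ and then $s_2s_3s_4=e$, so the fourth partial product repeats the first vertex, a contradiction; hence $s_1\in\{t^{\pm1}\}$ and $\ord(s_1)=3$. Your sketch of that last step points in the right direction but is not yet a proof. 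Likewise, in (ii) your dichotomy ``one vertex group vs.\ pairing $s_i$ with $s_j^{-1}$'' should explicitly account for the mixed word $s\,s\,t\,t$ with two separate power cancellations, which is excluded only because $s^2=e$ makes the second partial product equal to $e$. None of these are wrong turns, but they are exactly the places where the proposition's content lives, so the proposal is a sound plan rather than a complete argument.
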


\begin{proof}
    Let $\Gamma$ be a RAACH with defining graph $(H,m)$ and generators $S$. It follow from the nature of the relations that 
    \begin{equation} \label{eq:wordid}
    w = s_1 s_2 \dots s_n = e \end{equation}
    with $s_1,\dots,s_n \in S^*$ implies that the exponents of each generator $s \in S$ in the word $w$ must add up to an integer multiple of $m(s)$. Therefore, if we have for some $s \in S^*$ that $s$ appears in \eqref{eq:wordid}, then
    there must be at least two different $s_i,s_j$ with $1 \le i < j \le n$ with $s_i,s_j \in \{s,s^{-1}\}$. Therefore, any $3$-cycle $s_1,s_1s_2,s_1s_2s_3=e$ cannot involve $s^{\pm 1}, t ^{\pm}$ of two different generators $s,t \in S$, and it must be of the form described in (i).

    Similarly, for any $4$- or $5$-cycle $s_1,\dots,s_1s_2\cdots s_n=e$ with $n=4,5$ there can be at most two $s,t \in S^*$ with $s \neq t,t^{-1}$ such that $s_1,\dots,s_n \in \{s^{\pm 1}, t^{\pm 1}\}$.

    If a $4$-cycle involves only one $s \in S^*$, then it must be of the form $s,s^2,s^3,s^4=e$ or $s^{-1},s^{-2},s^{-3},s^{-4}=e$, since cycles to do allow multiple appearances of the same vertex. In this case we have $\ord(s) = \ord(s^{-1}) =4$.
    
    If a $4$-cycle involves $s,t \in S^*$, $s \neq t,t^{-1}$, possibly with their inverses, then we can assume, without loss of generality, that this $4$-cycle starts off either with $s,s^2$ or with $s,st$. In the first case, it must continue with $s,s^2,s^2t$ (after possibly renaming $t$ by $t^{-1}$), and the full $4$-cycle must be of the form $s,s^2,s^2t,s^2t^2$ ($s,s^2,s^2t,s^2$ and $s,s^2,s^2t,s^2ts^{\pm 1}$ are ruled out due to multiple occurrences of vertices or the fact that the last vertex is not the identity). Since we require $s^2t^2=e$, we conclude that $\ord(s)=\ord(t)=2$, which is a contradiction since then $s^2=e$ and the sequence $s,s^2=e,s^2t,s^2t^2=e$ would not be a cycle. Therefore, the $4$-cycle must start with $s,st$, and the next vertex must be either $st^2$ or $sts^{\pm 1}$. In the first case, the last vertex must be $st^2s^{\pm 1}$ ($st^3$ is ruled out since it cannot represent the identity), which would lead to $\ord(t)=2$ and, consequently, to a contradiction to the cycle condition since then $st^2 = s$. It remains to consider $4$-cycles starting with $s,st,sts^{\pm 1}$, which can only be completed by $s,st,sts^{\epsilon_1},sts^{\epsilon_1}t^{\epsilon_2}=e$ with $\epsilon_1,\epsilon_2 \in \{\pm 1\}$, since otherwise the last vertex could not be the identity.  
    The choice $\epsilon_1=1$ would lead to $\ord(s)=2$, in which case we would have $s=s^{-1}$. As similar argument applies to $t$. Therefore, the $4$-cycle involving $s,t \in S^*$ with $t \neq s,s^{-1}$ must be of the form $s,st,sts^{-1},sts^{-1}t^{-1}=e$, which implies that $s$ and $t$ commute. In this case the $4$-cycle simplifies to $s,st,t,e$, completing the proof of (ii). 

    For the proof of (iii), note first that $w=s_1s_2s_3s_4s_5$ cannot involve only one generator $s \in S^*$ and its inverse, since we can only have $\ord{s} \in \{2,3,4,\infty\}$, which rules this possibility out. Therefore, the word $w$ must involve two elements $s,t \in S^*$ with $t \neq s,s^{-1}$, that is $\{s_1,s_2,s_3,s_4,s_5\} \subset \{s^{\pm 1},t^{\pm 1} \}$. Since the exponents of $s$ and $t$ in $w$ must each add up to integer multiples of $\ord(s)$ and $\ord(t)$, respectively, we must have $2$ occurrences of $s^{\pm 1}$ and $3$ occurrences of $t^{\pm 1}$, or vice versa.
    Without loss of generality, assume that $t^{\pm 1}$ has $3$ occurrences in $w$. This forces the order of $t$ 
    to be $3$. By changing $s$ into $s^{-1}$, if needed, we can assume that $w$ contains the letters $s,s$ or the letters $s,s^{-1}$. In the first case, we must have $\ord(s)=2$, which means that $s=s^{-1}$, therefore, we can assume that the element represented by $w$ can also be expressed by a word, written again as $s_1s_2s_3s_4s_5$ with $3$ occurrences of $t$ (if there are $3$ occurrences of $t^{-1}$ instead, we can also replace $t$ by $t^{-1}$) as well as one occurrence of each $s$ and $s^{-1}$. Since $w$ comes from a $5$-cycle without multiple vertices and since we can cyclically shift the letters of the word $w$ such that $s$ comes to the front, we can conclude that we must have either the relation $sts^{-1}tt=e$ or $stts^{-1}t=e$. In the first case we obtain $[s^{-1},t^{-1}]=e$, and in the second case $[s^{-1},t]=e$, that is $s$ and $t$ must commute and the $5$-cycle can be written that all four elements $s,s^{-1},t,t^{-1}$ are involved. Moreover, the words $sts^{-1}tt$ and $stts^{-1}t$ and their cyclic shifts imply that the condition $s_1=s_5$ necessarily implies $s_1=t$, and therefore $\ord(s_1)=3$. This concludes the proof of (iii).
\end{proof}

We finish this section with the following fact, whose proof is straightforward and therefore omitted.

\begin{prop} Let $\Gamma_1$ and $\Gamma_2$ be two RAACHs with defining graphs $(H_1,m_1)$ and $(H_2,m_2)$, respectively. The direct product $\Gamma_1 \times \Gamma_2$ is again a RAACH with defining graph $(H,m)$, where $H$ is the disjoint union of $H_1$ and $H_2$ with additional edges between any pair of vertices in $H_1$ and $H_2$ and the vertex measure $m$, restricted to the vertices of $H_i$, $i=1,2$, coincides with the vertex measure $m_i$.
\end{prop}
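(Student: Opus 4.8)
The plan is to compare group presentations directly. First I would fix RAACH presentations
$$ \Gamma_i = \langle S_i \mid R_2^{(i)} \cup R_3^{(i)} \cup R_4^{(i)} \cup R_\infty^{(i)} \cup C_{H_i} \rangle, \qquad i = 1,2, $$
with $S_1$ and $S_2$ disjoint, and set $S = S_1 \sqcup S_2$ together with $m\colon S \to \{2,3,4,\infty\}$ defined by $m|_{S_i} = m_i$. The graph $H$ in the statement is the join of $H_1$ and $H_2$: its vertex set is $S$, and two vertices are adjacent precisely when they lie in different $H_i$, or they lie in the same $H_i$ and are adjacent there. Hence its set of commutator relations is $C_H = C_{H_1} \cup C_{H_2} \cup \{\,[s,t]=e : s \in S_1,\ t \in S_2\,\}$, so the RAACH with defining graph $(H,m)$ is
$$ \Gamma := \langle S \mid R_2^{(1)} \cup R_2^{(2)} \cup \cdots \cup R_\infty^{(1)} \cup R_\infty^{(2)} \cup C_H \rangle. $$

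Next I would invoke the standard fact that, for groups given by presentations on disjoint alphabets, the direct product is presented by the union of the two relator sets together with all cross commutators: if $\Gamma_i = \langle S_i \mid R_i \rangle$ with $S_1 \cap S_2 = \emptyset$, then
$$ \Gamma_1 \times \Gamma_2 \;\cong\; \langle S_1 \sqcup S_2 \mid R_1 \cup R_2 \cup \{\,[s,t]=e : s \in S_1,\ t \in S_2\,\} \rangle. $$
This is verified via the universal property of a presentation: the obvious homomorphism from the right-hand group onto $\Gamma_1 \times \Gamma_2$ (sending $s \in S_i$ to the corresponding generator of $\Gamma_i$ in the $i$-th factor) is well defined, since all listed relators hold in $\Gamma_1 \times \Gamma_2$, and it admits an inverse assembled from the two projections, because $\Gamma_1$ and $\Gamma_2$ map into the right-hand group with commuting images. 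Applying this with $R_i = R_2^{(i)} \cup \cdots \cup R_\infty^{(i)} \cup C_{H_i}$ identifies $\Gamma_1 \times \Gamma_2$ with the group $\Gamma$ above, so $\Gamma_1 \times \Gamma_2$ is the RAACH with defining graph $(H,m)$, and the $R_j$-decomposition of its generating set is $R_j = R_j^{(1)} \cup R_j^{(2)}$ as required.

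Finally I would check the compatibility of the vertex measure with the actual generator orders: the inclusion $\Gamma_i \hookrightarrow \Gamma_1 \times \Gamma_2$ as the $i$-th factor is injective, so the order of a generator $s \in S_i$ in the product equals its order in $\Gamma_i$, namely $m_i(s) = m(s)$; this confirms that the prescription "$m$ restricted to the vertices of $H_i$ coincides with $m_i$" is consistent with the RAACH convention that $m$ records generator orders. I do not expect a genuine obstacle: every step is routine, the only point requiring any care being the presentation lemma for direct products, whose proof is the standard universal-property argument sketched above.
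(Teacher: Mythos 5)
Your proof is correct: the presentation lemma for direct products over disjoint alphabets, applied to the join $H$ of $H_1$ and $H_2$, is exactly the standard argument, and your closing check that the factor inclusions preserve generator orders is a sensible (if minor) point of care. The paper explicitly omits the proof as ``straightforward,'' so there is nothing to compare against, but your route is surely the one the authors intended.
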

    
    \section{Curvature notions} \label{sec:curvatures}

    In this section we provide a detailed introduction into Olliver Ricci curvature and Bakry-\'Emery curvature. 
       
    \subsection{Ollivier Ricci curvature}
Ollivier Ricci curvature was introduced in \cite{Oll09} and is based on Optimal Transport Theory. A fundamental concept in this theory is the
Wasserstein distance between probability measures.
\begin{defin}
Let $G = (V,E)$ be a graph. Let $\mu_{1},\mu_{2}: V \to [0,1]$ be two probability measures on $V$, that is, $\sum_{x \in V} \mu_i(x) = 1$ for $i=1,2$. The {\it Wasserstein distance} $W_1(\mu_{1},\mu_{2})$ between $\mu_{1}$ and $\mu_{2}$ is defined as
\begin{equation} \label{eq:W1def}
W_1(\mu_{1},\mu_{2})=\inf_{\pi} \sum_{y\in V}\sum_{x\in V} d(x,y)\pi(x,y),
\end{equation}
where the infimum runs over all transport plans $\pi:V\times  V\rightarrow [0,1]$ satisfying
$$\mu_{1}(x)=\sum_{y\in V}\pi(x,y),\:\:\:\mu_{2}(y)=\sum_{x\in V}\pi(x,y).$$
\end{defin}
The transport plan $\pi$ moves a mass
distribution given by $\mu_1$ into a mass distribution given by
$\mu_2$, and $W_1(\mu_1,\mu_2)$ is a measure for the minimal effort
which is required for such a transition.
If $\pi$ attains the infimum in \eqref{eq:W1def} we call it an {\it
  optimal transport plan} transporting $\mu_{1}$ to $\mu_{2}$. We define the following probability measures $\mu_x$ for any
$x\in V,\: p\in[0,1]$:
\begin{equation} \label{eq:muxp}
\mu_x^p(z)=\begin{cases}p,&\text{if $z = x$,}\\
\frac{1-p}{d_x},&\text{if $z\sim x$,}\\
0,& \mbox{otherwise.}\end{cases}
\end{equation}
These measures can be viewed as probabilistic realisations of $1$-balls around $x$ or, alternatively, as random walks with \emph{idleness} $p$.

\begin{defin}
The \emph{$p-$Ollivier-Ricci curvature} of an edge $\{x,y\} \in E$ in $G=(V,E)$ is
$$\kappa_{ p}(x,y)=1-W_1(\mu^{ p}_x,\mu^{ p}_y).$$
The Ollivier Ricci curvature introduced by Lin-Lu-Yau in
\cite{LLY11} is defined as
$$\kappa_{LLY}(x,y) = \lim_{ p\rightarrow 1}\frac{\kappa_{ p}(x,y)}{1- p}.$$
\end{defin}

A fundamental concept in the optimal transport theory is Kantorovich duality. First we recall the notion
of a 1--Lipschitz functions and then state Kantorovich duality.

\begin{defin}
Let $G=(V,E)$ be a combinatorial graph and $f: V\rightarrow\mathbb{R}$. We say that $f$ is \emph{$1$-Lipschitz} if 
$$|f(x) - f(y)| \leq d(x,y)$$
for all $x,y\in V.$ The set of all $1$--Lipschitz functions is denoted by \textrm{1--Lip}. 
\end{defin}

\begin{thm}[Kantorovich duality]\label{Kantorovich}
Let $G = (V,E)$ be a combinatorial graph and $\mu_{1},\mu_{2}$ be two probability measures on $V$. Then
$$W_1(\mu_{1},\mu_{2})=\sup_{\substack{\phi:V\rightarrow \mathbb{R}\\ \phi\in \textrm{\rm{1}--{\rm Lip}} 
}}  \sum_{x\in V}\phi(x)(\mu_{1}(x)-\mu_{2}(x)).$$
If $\phi \in \textrm{1--Lip}$ attains the supremum we call it an \emph{optimal Kantorovich potential} transporting $\mu_{1}$ to $\mu_{2}$.
\end{thm}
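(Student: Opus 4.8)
The plan is to realise $W_1(\mu_1,\mu_2)$ as the value of a linear program and to deduce the stated identity from linear-programming strong duality, the only graph-specific ingredient being that the cost $d$ is a metric.

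First I would reduce to finitely supported measures, which is all that is needed in this paper since the measures $\mu_x^p$ in \eqref{eq:muxp} are finitely supported. Put $C=\mathrm{supp}\,\mu_1\cup\mathrm{supp}\,\mu_2$. Any transport plan $\pi$ satisfying the marginal constraints in \eqref{eq:W1def} must vanish outside $C\times C$ because $\pi\ge 0$, so \eqref{eq:W1def} is the finite-dimensional linear program
$$ \min_{\pi\ge 0}\ \sum_{x,y\in C} d(x,y)\,\pi(x,y)\qquad\text{subject to}\qquad \sum_{y\in C}\pi(x,y)=\mu_1(x),\quad \sum_{x\in C}\pi(x,y)=\mu_2(y). $$
It is feasible ($\pi=\mu_1\otimes\mu_2$ works) and bounded below by $0$, so both this program and its dual
$$ \max_{\phi,\psi:\,C\to\mathbb R}\ \sum_{x\in C}\mu_1(x)\phi(x)+\sum_{y\in C}\mu_2(y)\psi(y)\qquad\text{subject to}\qquad \phi(x)+\psi(y)\le d(x,y)\ \ \text{for all }x,y\in C $$
attain their optima with equal value.

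Next I would collapse the two dual functions into a single $1$-Lipschitz potential. Starting from an optimal pair $(\phi,\psi)$, first replace $\psi$ by $\psi^{c}(y):=\min_{x\in C}\bigl(d(x,y)-\phi(x)\bigr)$ and then $\phi$ by $\varphi(x):=\min_{y\in C}\bigl(d(x,y)-\psi^{c}(y)\bigr)$. Both replacements preserve dual feasibility, and since $\psi^{c}\ge\psi$ and $\varphi\ge\phi$ pointwise while $\mu_1,\mu_2\ge 0$, they do not decrease the objective; hence $(\varphi,\psi^{c})$ is still optimal. Using symmetry of $d$, the triangle inequality, and $d(z,z)=0$, one checks that $\varphi$ is $1$-Lipschitz on $C$ and that $\psi^{c}(y)\le-\varphi(y)$ for all $y\in C$, so $(\varphi,-\varphi)$ is dual-feasible with objective at least that of $(\varphi,\psi^{c})$, and therefore optimal; its objective equals $\sum_{z\in C}\varphi(z)\bigl(\mu_1(z)-\mu_2(z)\bigr)$. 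Finally, extend $\varphi$ to all of $V$ by the McShane formula $\widetilde\varphi(z):=\min_{x\in C}\bigl(\varphi(x)+d(x,z)\bigr)$, which is $1$-Lipschitz on $V$ and agrees with $\varphi$ on $C\supseteq\mathrm{supp}(\mu_1-\mu_2)$, so the objective is unchanged. Conversely, for every $\phi\in\textrm{1--Lip}$ the pair $(\phi,-\phi)$ is dual-feasible, so weak duality gives $W_1(\mu_1,\mu_2)\ge\sum_{z\in V}\phi(z)(\mu_1(z)-\mu_2(z))$; combining the two directions yields the claimed equality, with the supremum attained by $\widetilde\varphi$.

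The step I expect to be the main obstacle is this collapse of the two-function dual to a single $1$-Lipschitz potential with $\psi=-\varphi$: one must verify that a single round of $c$-transforms already lands in $\textrm{1--Lip}$ and cannot decrease the objective, and this is precisely where the metric structure of the cost (not merely its non-negativity) is essential — it is what turns the constraint $\phi(x)+\psi(y)\le d(x,y)$ into the Lipschitz condition $|\phi(x)-\phi(y)|\le d(x,y)$. A secondary subtlety, should one want the statement verbatim for measures of infinite support on an infinite graph, is that finite-dimensional LP duality must be replaced by an infinite-dimensional argument (Fenchel--Rockafellar, or Kantorovich's original proof); the finite-support reduction above already covers every application of the theorem in this paper.
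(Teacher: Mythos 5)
The paper does not prove this statement at all: it is quoted as the classical Kantorovich--Rubinstein duality theorem and used as a black box, so there is no in-paper argument to compare against. Your proposal is a correct, self-contained proof of the finitely-supported case. The reduction to the finite LP on $C=\mathrm{supp}\,\mu_1\cup\mathrm{supp}\,\mu_2$ is valid (the marginal constraints together with $\pi\ge 0$ force $\pi$ to vanish off $C\times C$), strong LP duality applies since the primal is feasible and bounded below, and the double $c$-transform argument is sound: $\varphi(x)=\min_{y}\bigl(d(x,y)-\psi^{c}(y)\bigr)$ is a finite minimum of $1$-Lipschitz functions and hence $1$-Lipschitz, the inequalities $\psi^{c}\ge\psi$, $\varphi\ge\phi$, $\psi^{c}\le-\varphi$ all follow from feasibility and $d(y,y)=0$, and non-negativity of the measures ensures the objective never decreases, so $(\varphi,-\varphi)$ is optimal. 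The McShane extension correctly handles the fact that the supremum in the statement ranges over $1$-Lipschitz functions on all of $V$ (this needs $G$ connected so that $d$ is finite, which the paper assumes). The only gap is the one you flag yourself: the theorem as stated allows arbitrary probability measures on a possibly infinite $V$, and for infinitely supported measures the finite-dimensional LP duality must be replaced by an infinite-dimensional duality argument; since every application in the paper uses the finitely supported measures $\mu_x^p$, your proof covers all that is actually needed.
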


The following result 
allows us to use a convenient choice of idleness parameter $p$ 
to compute $\kappa_{LLY}(x,y)$  from $\kappa_{p}(x,y)$. 

\begin{thm}[see \cite{BCLMP18}]\label{LLY_vs_p-OR}
  Let $G=(V,E)$ be a combinatorial graph. Let $x,y\in V$ with
  $x\sim y.$ Then the function $p \mapsto \kappa_{p}(x,y)$ is concave
  and piecewise linear over $[0,1]$ with at most $3$ linear
  parts. Furthermore $\kappa_{p}(x,y)$ is linear on the intervals
  \begin{equation*}
    \left[0,\frac{1}{{\rm{lcm}}({\rm{deg}}(x),{\rm{deg}}(y))+1}\right]\:\:\: {\rm and} \:\:\:\left[\frac{1}{\max({\rm{deg}}(x),{\rm{deg}}(y))+1},1\right].
  \end{equation*}
  Thus, if $p\in \left[\frac{1}{\max({\rm{deg}}(x),{\rm{deg}}(y))+1},1\right]$ then 
  \begin{equation} \label{eq:relLLykp}
  \kappa_{LLY}(x,y) = \frac{1}{1-p}\kappa_{p}(x,y).
  \end{equation}
\end{thm}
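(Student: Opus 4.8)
The plan is to read off the statement from Kantorovich duality (Theorem~\ref{Kantorovich}) applied to the measures $\mu_x^p,\mu_y^p$ of \eqref{eq:muxp}, regarded as functions of the single parameter $p\in[0,1]$. Write $\mu_x^p = p\,\delta_x + (1-p)m_x$, where $m_x$ is the uniform probability measure on the neighbour set $S_1(x)$. Then
$$ \mu_x^p - \mu_y^p = p(\delta_x-\delta_y) + (1-p)(m_x-m_y), $$
so for every $\phi\in\textrm{1--Lip}$ the quantity $\sum_z \phi(z)(\mu_x^p(z)-\mu_y^p(z)) = p(\phi(x)-\phi(y)) + (1-p)\sum_z \phi(z)(m_x(z)-m_y(z))$ is affine in $p$. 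Since all four measures are supported on the finite set $W = \{x,y\}\cup S_1(x)\cup S_1(y)$ and, after the harmless normalisation $\phi(x)=0$, the admissible $1$-Lipschitz functions on $W$ form a bounded polytope $P$, the function $p\mapsto W_1(\mu_x^p,\mu_y^p)$ is the upper envelope of the finitely many affine functions indexed by the vertices of $P$. Hence it is convex and piecewise linear with finitely many pieces, so $p\mapsto\kappa_p(x,y) = 1-W_1(\mu_x^p,\mu_y^p)$ is concave and piecewise linear; moreover $\kappa_1(x,y) = 1 - d(x,y) = 0$.

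Next I would use concavity to reduce the interval statements to single points. Because $p\mapsto\kappa_p(x,y)$ is concave with $\kappa_1(x,y)=0$, the secant slopes $p\mapsto \frac{0-\kappa_p(x,y)}{1-p}$ are non-increasing on $[0,1)$; equivalently $p\mapsto\frac{\kappa_p(x,y)}{1-p}$ is non-decreasing on $[0,1)$, with limit $\kappa_{LLY}(x,y)$ as $p\to1$, so that $\kappa_p(x,y)\le(1-p)\kappa_{LLY}(x,y)$ throughout. Consequently, in order to show that $p\mapsto\kappa_p(x,y)$ is affine on $[\frac{1}{\max(\deg(x),\deg(y))+1},1]$ — which is exactly what is needed for \eqref{eq:relLLykp} — it suffices to prove the reverse inequality at the single value $p_0 = \frac{1}{\max(\deg(x),\deg(y))+1}$, i.e.\ $\kappa_{p_0}(x,y)\ge(1-p_0)\kappa_{LLY}(x,y)$; equality there forces $\frac{\kappa_p(x,y)}{1-p}\equiv\kappa_{LLY}(x,y)$ on all of $[p_0,1)$ by monotonicity, hence $\kappa_p(x,y)=(1-p)\kappa_{LLY}(x,y)$ on $[p_0,1]$.

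The heart of the proof is therefore the estimate $W_1(\mu_x^{p_0},\mu_y^{p_0})\le 1-(1-p_0)\kappa_{LLY}(x,y)$. Assume without loss of generality that $\deg(x)\ge\deg(y)$, so $p_0 = \frac{1}{\deg(x)+1}$ and $\mu_x^{p_0}$ is precisely the uniform measure on $B_1(x)$ (this is the significance of the threshold). For any $\phi\in\textrm{1--Lip}$ with $\phi(x)-\phi(y)=1$ a short computation with the random walk Laplacian $\Delta$ gives $\sum_z\phi(z)(m_x(z)-m_y(z)) = \Delta\phi(x)-\Delta\phi(y) + (\phi(x)-\phi(y))$, hence $\sum_z\phi(z)(\mu_x^{p_0}(z)-\mu_y^{p_0}(z)) = 1 + (1-p_0)(\Delta\phi(x)-\Delta\phi(y))$, and by the Münch--Wojciechowski formula $\kappa_{LLY}(x,y)=\inf\{\Delta f(x)-\Delta f(y): f\in\textrm{1--Lip},\ f(y)-f(x)=1\}$ (applied with $f=-\phi$) the supremum of the left-hand side over such $\phi$ equals $1-(1-p_0)\kappa_{LLY}(x,y)$. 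It remains to exhibit a transport plan from $\mu_x^{p_0}$ to $\mu_y^{p_0}$ of cost exactly $1-(1-p_0)\kappa_{LLY}(x,y)$; one builds it from a minimiser $f^*$ of the Münch--Wojciechowski formula (respecting the complementary-slackness constraint ``$\pi(u,v)>0\Rightarrow f^*(v)-f^*(u)=d(u,v)$'', which in particular permits routing mass along the edge $\{x,y\}$ since $f^*(y)-f^*(x)=1$), using crucially that $\mu_x^{p_0}$ puts weight $\frac{1}{\deg(x)+1}$ both at the centre $x$ and at each neighbour, which is exactly enough slack to push the surplus at $x$ across to $y$. I expect this routing step to be the genuine obstacle: pinning down the precise threshold $\frac{1}{\max(\deg(x),\deg(y))+1}$ (as opposed to merely ``$p$ close to $1$'') requires a careful combinatorial min-cost-flow / Hall-type argument on $B_1(x)\cup B_1(y)$ rather than pure duality, and one has to control the interaction of the surpluses at $x$ and $y$ with the weights on triangles through the edge $\{x,y\}$.

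For completeness I would also treat the remaining two assertions, which are not used elsewhere in the paper. On $[0,\frac{1}{\mathrm{lcm}(\deg(x),\deg(y))+1}]$ every weight occurring in $\mu_x^p,\mu_y^p$, namely $p$, $\frac{1-p}{\deg(x)}$ and $\frac{1-p}{\deg(y)}$, is a non-negative integer multiple of the common unit $u(p)=\frac{1-p}{\mathrm{lcm}(\deg(x),\deg(y))}$, with $p\le u(p)$; as $p$ ranges over this interval only the (continuously varying) centre weights change while the integral combinatorial ``skeleton'' of the transportation problem stays fixed, and a parametric min-cost-flow argument then shows $W_1(\mu_x^p,\mu_y^p)$ is affine there. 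Finally, since $\mathrm{lcm}(\deg(x),\deg(y))\ge\max(\deg(x),\deg(y))$, the two distinguished intervals abut or are disjoint, and a convexity argument together with a refinement of the first step — identifying the at most three affine functions that can ever be active — yields the bound of three linear parts, completing the proof. The identity \eqref{eq:relLLykp} is then immediate from affineness of $p\mapsto\kappa_p(x,y)$ on $[p_0,1]$ together with $\kappa_1(x,y)=0$.
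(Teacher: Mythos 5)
The paper does not actually prove this theorem; it is quoted from \cite{BCLMP18}, so there is no in-paper argument to compare against. Your framework is sound as far as it goes: the upper-envelope argument gives concavity and piecewise linearity, and the reduction --- via monotonicity of $p\mapsto\kappa_p(x,y)/(1-p)$ together with $\kappa_1(x,y)=0$ --- of the whole of \eqref{eq:relLLykp} to the single inequality $W_1(\mu_x^{p_0},\mu_y^{p_0})\le 1-(1-p_0)\kappa_{LLY}(x,y)$ at $p_0=\frac{1}{\max(\deg(x),\deg(y))+1}$ is exactly right.

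But that single inequality is the entire content of the result, and you do not prove it. Your duality computation (the identity $\sum_z\phi(z)(\mu_x^{p_0}(z)-\mu_y^{p_0}(z))=1+(1-p_0)(\Delta\phi(x)-\Delta\phi(y))$ for $1$-Lipschitz $\phi$ with $\phi(x)-\phi(y)=1$, combined with the M\"unch--Wojciechowski formula) only evaluates the supremum of the Kantorovich functional over the \emph{restricted} class of potentials satisfying $\phi(x)-\phi(y)=1$; since $W_1$ is a supremum over \emph{all} $1$-Lipschitz potentials, this yields $W_1(\mu_x^{p_0},\mu_y^{p_0})\ge 1-(1-p_0)\kappa_{LLY}(x,y)$, which is the direction you already had from monotonicity. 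The needed upper bound requires either exhibiting a transport plan of cost $1-(1-p_0)\kappa_{LLY}(x,y)$ or, dually, showing that an optimal Kantorovich potential can be chosen with $\phi(x)-\phi(y)=1$; this is precisely the nontrivial combinatorial core of \cite{BCLMP18}, it is where the threshold $\frac{1}{\max(\deg(x),\deg(y))+1}$ genuinely enters, and you explicitly defer it (``I expect this routing step to be the genuine obstacle''). The assertions about the interval $\bigl[0,\frac{1}{\mathrm{lcm}(\deg(x),\deg(y))+1}\bigr]$ and the bound of three linear parts are likewise only sketched. As written, the proposal is a correct reduction plus an acknowledged missing step, not a proof. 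A smaller caveat: the M\"unch--Wojciechowski identity you invoke is usually established in tandem with the stabilisation of $\kappa_p/(1-p)$ near $p=1$, so if you rely on it here you should verify that the argument is not circular.
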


The probability measures \eqref{eq:muxp} give rise to a corresponding \emph{random walk Laplacian}, defined on functions $f: V \to \mathbb{R}$ as follows:
$$ \Delta f(x) = \sum_{y \in V} \mu_x^p(y) (f(y)-f(x)). $$
We can choose other graph Laplacians $\Delta$ which are no longer based on probability measures. In fact, the definition 
$$ \kappa_{LLY}(x,y) = \inf_{\substack{f \in \textrm{\rm{1}--{\rm Lip}}\\ f(y)-f(x) = 1}} \Delta f(x) - \Delta f(y) $$
in \cite{MW19} recovers the original Ollivier Ricci curvature for random walk Laplacians and 
leads to a generalization of Ollivier Ricci curvature for more general Laplacians. We use this more general viewpoint for the results in Theorem \ref{thm:addrel}.

\subsection{ Bakry-\'Emery curvature }

Bakry-\'Emery introduced in \cite{BE84} a curvature notion which was based on two symmetric bilinear $\Gamma$-operators involving the 
generator (Laplacian) of a continuous time Markov process. 
In the setting of a combinatorial graph $G=(V,E)$, these $\Gamma$-operators are defined via a graph Laplacian $\Delta$ for a pair of functions $f,g: V \to \mathbb{R}$ as follows:
\begin{eqnarray*}
    {\bf{\Gamma}}(f,g) &=& \frac{1}{2}\left( \Delta(fg) - f\Delta g - g \Delta f\right), \\
    {\bf{\Gamma}}_2(f,g) &=& \frac{1}{2}\left( \Delta {\bf{\Gamma}}(f,g) - {\bf{\Gamma}}(f,\Delta g) - {\bf{\Gamma}}(g,\Delta f)\right).
\end{eqnarray*}
In the case of an unweighted combinatorial graph, the Laplacian for Bakry\'Emery curvature used in the paper is the non-normalized Laplacian given by
\begin{equation} \label{eq:nonormlap}
\Delta f(x) = \sum_{y \sim x} (f(y)-f(x)). 
\end{equation}
We write ${\bf{\Gamma}} f$ for ${\bf{\Gamma}}(f,f)$ and, similarly, ${\bf{\Gamma}}_2 f$
for ${\bf{\Gamma}}_2(f,f)$.
Usually, the Bakry-\'Emery curvature involves a dimension parameter $N \in (0,\infty]$, but we restrict our consideration to the case $N=\infty$. In the graph theoretical setting, Bakry-\'Emery curvature is a function $K: V \to \mathbb{R}$ on the vertices.

\begin{defin}
    The \emph{Bakry-\'Emery curvature} $K(x)$ (for dimension $N=\infty$) of a vertex $x \in V$ in $G=(V,E)$ is the supremum of all values $K \in \mathbb{R}$ such that
    \begin{equation} \label{eq:BKcurvdef} 
    {\bf{\Gamma}}_2 f(x) - K {\bf{\Gamma}} f(x) \ge 0 \quad \text{for all $f: V \to \mathbb{R}$.} 
    \end{equation}
\end{defin}

Let us reformulate the condition \eqref{eq:BKcurvdef} in matrix form. Firstly, ${\bf{\Gamma}} f(x)$ and ${\bf{\Gamma}}_2 f(x)$ do not change by adding a constant to the function $f$, so we can restrict the condition \eqref{eq:BKcurvdef} to functions $f: V \to \mathbb{R}$ 
with $f(x) = 0$. Next, we employ the fact that the left hand side of \eqref{eq:BKcurvdef} does only depend on the values of $f$ in the $2$-ball around $x$: Assume that there are $m = d_x$ vertices at distance $1$ from $x$ and $n$ vertices as distance $2$ from $x$ and denote these vertices by $y_1,\dots,y_n$ and $z_1,\dots,z_n$, respectively. Let $$ \vec{f}_1(x) = (f(y_1),\dots,f(y_n)) \quad \text{and} \quad \vec{f}_2(x) = (f(z_1),\dots,f(z_n)). $$
Then the inequality in \eqref{eq:BKcurvdef} can be reformulated with the help of a suitable symmetric $n \times n$ matrix ${\bf{\Gamma}}(x)$ and of a suitable symmetric $(n+m) \times (n+m)$ matrix ${\bf{\Gamma}}_2(x)$ as
$$ (\vec{f}_1,\vec{f}_2) {\bf{\Gamma}}_2(x) \begin{pmatrix} \vec{f}_1 \\ \vec{f}_2 \end{pmatrix} - \vec{f}_1 (K {\bf{\Gamma}}(x)) \vec{f}_1 \ge 0. $$
This reduces the determination of $K(x)$ as a semidefinite programming problem.

Writing ${\bf{\Gamma}}_2(x)$ as a block matrix with blocks of size $n$ and $m$,
$$ {\bf{\Gamma}}_2(x) = \begin{pmatrix} ({\bf{\Gamma}}_2(x) )_{S_1,S_1} &
({\bf{\Gamma}}_2(x) )_{S_1,S_1} \\
({\bf{\Gamma}}_2(x) )_{S_2,S_1} &
({\bf{\Gamma}}_2(x) )_{S_2,S_2} \end{pmatrix}, $$
we can reformulate \eqref{eq:BKcurvdef} further as
\begin{equation} \label{eq:BKcurvdef2} 
\begin{pmatrix} ({\bf{\Gamma}}_2(x) )_{S_1,S_1} - K {\bf{\Gamma}}(x) &
({\bf{\Gamma}}_2(x) )_{S_1,S_1} \\
({\bf{\Gamma}}_2(x) )_{S_2,S_1} &
({\bf{\Gamma}}_2(x) )_{S_2,S_2} \end{pmatrix} \succcurlyeq 0, 
\end{equation}
where $A \succcurlyeq B$ and $A \succ B$ for symmetric matrices mean that $A-B$ is positive semidefinite and that $A-B$ is positived definite, respectively. Now we employ the Schur complement for symmetric block matrices 
$$ M = \begin{pmatrix} M_{11} & M_{12} \\ M_{21} & M_{22} \end{pmatrix} $$
with invertible $M_2$, defined by
$$ M/M_{22} := M_{11} - M_{12} M_{22}^{-1} M_{21}. $$
Using the fact that $M \succcurlyeq 0$ if and only if $M_2 \succcurlyeq 0$, \eqref{eq:BKcurvdef2} is equivalent to
\begin{equation} \label{eq:BKcurvdef3} 
Q(x) - K {\bf{\Gamma}}(x) \succcurlyeq 0 
\end{equation}
where
$$ Q(x) = {\bf{\Gamma}}_2(x)/\left ({\bf{\Gamma}}_2(x)\right)_{S_2,S_2}. $$
All derivations so far were carried out with respect to the non-normalized Laplacian \eqref{eq:nonormlap}, but they are equally valid in the case of the weighted Laplacian \eqref{eq:wlap}.
The non-normalized Laplacian can be viewed as the weighted Laplacian with trivial vertex measure $m = \mathbbm{1}_V$ and trivial edge weights $w = \mathbbm{1}_E$. Introducing finally the symmetric matrix
$$ A(x) := 2 D(x)^{-1} Q(x) D(x) \quad \text{with} \,\, D(x) = \mathrm{diag}(\sqrt{w(x,y_1)}, \dots, \sqrt{w(x,y_n)}), $$
where $\mathrm{diag}(c_1,\dots,c_n)$ denotes the diagonal matrix with entries $c_1,\dots,c_n$ on its diagonal, 
\eqref{eq:BKcurvdef3} is equivalent to $A(x) - K {\rm{Id}}_n) \succcurlyeq 0$. It follows from these manipulations that the determination of the Bakry-\'Emery curvature $K(x)$ reduces to finding the smallest eigenvalue of the matrix $A(x)$, that is
$$ K(x) = \lambda_{\rm{min}}(A(x)). $$
The references for this reformulation process are \cite{Sic20,Sic21} and \cite{CKLP22}. We will make use of the following description of the curvature matrix:
\begin{equation} \label{eq:AxLapl} 
A(x) = - 2\Delta_{S_1(x)} -2\Delta_{S_1'(x)} + J + \frac{3-D}{2} {\rm{Id}} - \frac{1}{2}{\rm{diag}}(d_1^+,\dots,d_D^+). 
\end{equation}
Here $J$ is the $D \times D$ all-one matrix, $\Delta_{S_1(x)}$ is the Laplacian of the subgraph induced by $S_1(x)$ with weights $w_{ij}=1$ if and only if vertices $y_i$ and $y_j$ are neighbours in this induced subgraph, $\Delta_{S_1'(x)}$ is the Laplacian of the weighted graph with vertex set $S_1(x)$, vertex measure $m \equiv 1$, and edge weights $w_{ij}^{S_1'(x)} = \sum_{z \in S_2(x)} \frac{w_{y_iz}w_{y_jz}}{d_z^-}$ for $i \neq j$ and $0$ otherwise, where $w_{yz}=1$ if $z \in S_2(x)$ is adjacent to vertex $y \in S_1(x)$ and $0$ otherwise, and $d_z^-$ is the in-degree of $z \in S_2(x)$, that is, the number of vertices in $S_1(x)$ adjacent to $z$. Note that the matrices representing Laplacians have the property that their rows sums are all equal to zero. 

\section{Curvatures of RAACHs}
\label{sec:curvRAACHs}

In the next two subsections, we prove the curvature results for the Cayley graphs of certain RAACHs stated in the Introduction (Theorems \ref{thm:mainraachor} and \ref{thm:mainraachbe}). Due to vertex transitivity, it suffices to consider Ollivier Ricci curvatures $\kappa_{LLY}(s)$ of generators in $s \in S^*$ (which represent the edges incident to $e \in \Gamma$) and the Bakry-\'Emery curvature $K(e)$ at $e \in \Gamma$.

\subsection{Ollivier Ricci curvature of RAACHs}

We start with the following general fact about Ollivier Ricci curvature.

\begin{lemma} \label{lem:ollgeneral}
    Let $G=(V,E)$ be a combinatorial graph with two adjacent vertices $x,y \in V$. Assume that the neighbourhood structures of $x$ and $y$ are as illustrated in Figure \ref{OR_figure}, with or without an additional common vertex $z$ (illustrated in blue), that is, the degrees of $x$ and $y$ are equal and either $n+\ell+1$ or $n+\ell+2$ (depending on whether $z \not\in V$ or $z \in V$). Assume further that we have the following distances between these vertices in $G$:
    \begin{itemize}
        \item[(i)] $d_G(x_i,y_j) = 3$ for $i,j \in [\ell]$,
        \item[(ii)] $d_G(x_i,v_j) = d_G(y_i,u_j) = 3$ for $i \in [\ell]$ and $j \in [n]$,
        \item[(iii)] $d_G(u_j,v_j) = 1$ for $j \in [n]$.
    \end{itemize}
    Then we have
    $$ \kappa_{LLY}(x,y) = \begin{cases} \frac{2-2\ell}{n+\ell+1}, & \text{if $z \not\in V$,} \\ \frac{3-2\ell}{n+\ell+2}, & \text{if $z \in V$.}  \end{cases}$$
\end{lemma}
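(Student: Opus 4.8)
The strategy is to use the limit-free formula $\kappa_{LLY}(x,y) = \inf\{\Delta f(x) - \Delta f(y) : f \in \text{1--Lip},\ f(y)-f(x)=1\}$ with the random walk (normalized) Laplacian, or equivalently to invoke Theorem \ref{LLY_vs_p-OR} and compute $\kappa_p(x,y)$ for a single well-chosen $p$ close to $1$ via the Kantorovich duality of Theorem \ref{Kantorovich}. I would opt for the latter: fix $p$ large enough that $p \in [\frac{1}{\deg(x)+1},1]$ (note $\deg(x)=\deg(y)=D$ where $D=n+\ell+1$ or $n+\ell+2$), so that $\kappa_{LLY}(x,y) = \frac{1}{1-p}\kappa_p(x,y)$. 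Then it suffices to pin down $W_1(\mu_x^p,\mu_y^p)$ exactly.

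First I would set up the two measures $\mu_x^p, \mu_y^p$ explicitly: each puts mass $p$ on its centre, mass $\frac{1-p}{D}$ on each neighbour, where the neighbours of $x$ are $\{y, x_1,\dots,x_\ell, u_1,\dots,u_n\}$ (plus $z$ if present) and the neighbours of $y$ are $\{x, y_1,\dots,y_\ell, v_1,\dots,v_n\}$ (plus $z$). The common mass — on $z$ if present, and the overlap coming from $x\in\text{supp}(\mu_y)$, $y\in\text{supp}(\mu_x)$ — should be handled carefully; the effective transport is between the "excess" parts of the two measures. To get the upper bound $W_1 \le \cdots$ I would exhibit an explicit transport plan: move the mass $p - \frac{1-p}{D}$ sitting at $x$ (after cancelling the $\frac{1-p}{D}$ that $\mu_y$ also places at $x$) a distance $1$ to $y$; for each $j\in[n]$, move the mass $\frac{1-p}{D}$ from $u_j$ to $v_j$ a distance $1$ (using hypothesis (iii)); and move the mass $\frac{1-p}{D}$ from each $x_i$ to $y_i$ — but here the key observation is that $d_G(x_i,y_i)$ could be as large as $3$ by (i), so this costs $3\cdot\frac{1-p}{D}$ per $i$; the $z$-mass (if present) stays put at cost $0$. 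Summing gives an upper bound for $W_1$.

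For the matching lower bound I would produce an optimal Kantorovich potential $\phi$. The natural candidate is $\phi$ taking value $0$ on $x$ and its side, value $1$ on $y$ and its side, but corrected on the far vertices $x_i, y_i$ to exploit the distance-$3$ separations: set $\phi(x)=0$, $\phi(x_i) = -1$, $\phi(u_j)=0$, $\phi(z)=\tfrac12$ (if present), $\phi(y)=1$, $\phi(y_i)=2$, $\phi(v_j)=1$; one then checks $\phi$ is $1$-Lipschitz using precisely hypotheses (i)–(iii) (e.g. $|\phi(x_i)-\phi(y_j)| = 3 = d_G(x_i,y_j)$, $|\phi(x_i)-\phi(v_j)|=2 \le 3$, $|\phi(u_j)-\phi(v_j)|=1=d_G(u_j,v_j)$, and the remaining pairs are at graph distance $\ge 1$ or $\ge 2$ as needed). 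Evaluating $\sum_x \phi(x)(\mu_x^p(x)-\mu_y^p(x))$ for this $\phi$ gives a lower bound on $W_1$ matching the upper bound, whence $W_1 = 1 - \frac{1-p}{D}\bigl((1) + n(1) + \ell(-2) \bigr)$-type expression, i.e. $W_1 = 1 - \frac{(1-p)}{D}(2-2\ell+\text{[correction if }z\in V])$. Finally $\kappa_p = 1 - W_1$ and division by $1-p$ yields $\frac{2-2\ell}{n+\ell+1}$ or $\frac{3-2\ell}{n+\ell+2}$ according to whether $z\notin V$ or $z\in V$.

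The main obstacle I anticipate is the lower bound: one must verify that the chosen $\phi$ is genuinely $1$-Lipschitz on the whole relevant $2$-ball (not just on the listed pairs) — in particular that no hidden short paths between the $x_i$'s, $y_j$'s, $u_j$'s and $v_j$'s force a smaller Lipschitz constant than what the potential assigns — and, dually, that the exhibited transport plan is actually feasible (the marginals match) given the overlap at $x$, $y$, and $z$. Getting the bookkeeping of the common mass at $z$ right is what produces the $+1$ in the numerator and the $+1$ in the denominator in the $z\in V$ case, so that accounting must be done with care. Once the two bounds coincide, optimality of both $\phi$ and $\pi$ follows automatically from Theorem \ref{Kantorovich}, and no further optimization is needed.
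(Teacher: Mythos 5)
Your proposal follows essentially the same route as the paper: an explicit transport plan for the upper bound on $W_1(\mu_x^p,\mu_y^p)$, an explicit Kantorovich potential for the matching lower bound, and Theorem \ref{LLY_vs_p-OR} to convert $\kappa_p$ into $\kappa_{LLY}$. The only structural difference is the choice of idleness: the paper takes $p=1/(D+1)$ (the left endpoint of the admissible interval), which makes both $\mu_x^p$ and $\mu_y^p$ uniform with mass $1/(D+1)$ on their closed $1$-balls, so the masses at $x$, $y$ and $z$ cancel exactly and the plan reduces to $x_i\mapsto y_i$ (cost $3$ each) and $u_j\mapsto v_j$ (cost $1$ each), with no mass shuttled between the centres. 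Your choice of large $p$ works too, but forces you to track the excess centre mass $p-\tfrac{1-p}{D}$ moving from $x$ to $y$; the arithmetic comes out the same.

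Two small corrections. First, your potential is oriented the wrong way for the duality formula as stated, $W_1(\mu_x^p,\mu_y^p)=\sup_\phi\sum\phi\,(\mu_x^p-\mu_y^p)$: since $\mu_x^p-\mu_y^p$ is positive on the $x$-side, you need $\phi$ \emph{large} there, whereas you set $\phi(x_i)=-1$ and $\phi(y_i)=2$; as written your $\phi$ yields a negative (hence useless) lower bound. Negating it (equivalently, adding $2$ to $-\phi$) gives exactly the paper's potential $\phi_0(x_i)=3$, $\phi_0(x)=\phi_0(u_j)=2$, $\phi_0(y)=\phi_0(v_j)=1$, $\phi_0(y_i)=0$. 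Second, the ``main obstacle'' you flag --- extending the potential to all of $V$ without violating the Lipschitz condition --- is not actually an obstacle: it suffices to check the $1$-Lipschitz property on the finite set $V_0$ of listed vertices with respect to the ambient distance $d_G$ (which is precisely what hypotheses (i)--(iii) deliver), and then invoke the McShane extension $\phi(w)=\sup\{\phi_0(w_0)-d(w_0,w)\mid w_0\in V_0\}$, which preserves the Lipschitz constant; this is what the paper does. Since $\mu_x^p-\mu_y^p$ is supported on $V_0\cup\{z\}$ and $z$ carries equal mass under both measures, the values of the extension outside $V_0$ never enter the computation. With these two repairs your argument is complete and coincides with the paper's.
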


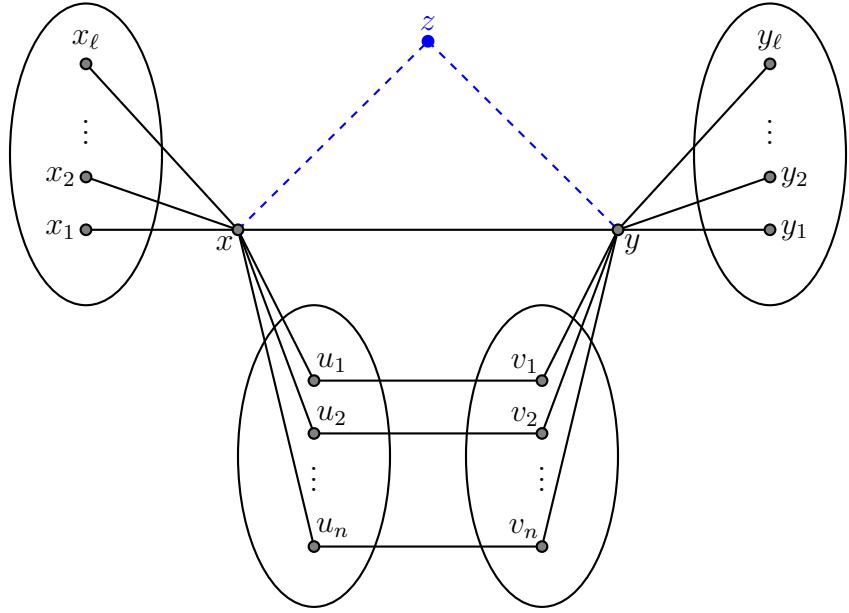
\begin{figure}[h]
\begin{center}
\tikzstyle{every node}=[circle, draw, fill=black!50,
                        inner sep=0pt, minimum width=4pt]      
\begin{tikzpicture}[thick,scale=1]%
    \draw (0,0) -- (5,0);

    \draw [dashed,color=blue](0,0) -- (2.5,2.5);
    \draw (2.5,2.5) node[color=blue,label=above:{\blue{$z$}}]{};
    \draw [dashed,color=blue](2.5,2.5) -- (5,0);
    
    \draw (1,-2) -- (4,-2);
     \draw (1,-2.7) -- (4,-2.7);
     \draw (1,-4.2) -- (4,-4.2);
         
    \draw (0,0)--(1,-2);
    \draw (0,0)--(1,-2.7);
    \draw (0,0)--(1,-4.2);
    \draw (1,-2) node[label=above right:$u_1$]{};
    \draw (1,-2.7) node[label=above right:$u_2$]{};
    \draw (1,-4.2) node[label=above right:$u_n$]{};
    \draw (1,-3) ellipse (1cm and 2cm);
    \node[draw=none,fill=none] at (1,-3.2) {\vdots};
    
    \draw (5,0)--(4,-2);
    \draw (5,0)--(4,-2.7);
    \draw (5,0)--(4,-4.2);
    \draw (4,-2) node[label=above left:$v_1$]{};
    \draw (4,-2.7) node[label=above left:$v_2$]{};
    \draw (4,-4.2) node[label=above left:$v_n$]{};
    \draw (4,-3) ellipse (1cm and 2cm);
    \node[draw=none,fill=none] at (4,-3.2) {\vdots};

    \draw (0,0)--(-2,0);
    \draw (0,0)--(-2,0.7);
    \draw (0,0)--(-2,2.2);
    \draw (-2,0) node[label=left:$x_1$]{};
    \draw (-2,0.7) node[label=left:$x_2$]{};
    \draw (-2,2.2) node[label={[label distance=0px]90:$x_\ell$}]{};
    \draw (-2,1) ellipse (1cm and 2cm);
    \node[draw=none,fill=none] at (-2,1.4) {\vdots};
     
    \draw (5,0)--(7,0);
    \draw (5,0)--(7,0.7);
    \draw (5,0)--(7,2.2);
    \draw (7,0) node[label= right:$y_1$]{};
    \draw (7,0.7) node[label= right:$y_2$]{};
    \draw (7,2.2) node[label={[label distance=-0px]90:$y_{\ell}$}]{};
    \draw (7,1) ellipse (1cm and 2cm);
    \node[draw=none,fill=none] at (7,1.4) {\vdots};
\draw (0,0) node[label=below left:$x$]{};
    \draw (5,0) node[label=below right:$y$]{};

    \end{tikzpicture}
\end{center}
\caption{The labelling of the vertices in the neighbourhoods of $x$ and $y$.}
\label{OR_figure}
\end{figure}

\begin{proof}
    Let us fist consider the case $z \not\in V$ and set $D = {\rm{deg}}(x)={\rm{deg}}(y)=n+\ell+1$ and $p=1/(D+1)$. Consider the transport plan transporting $\mu_x^p$ to $\mu_y^p$ given by $\pi(x_i,y_i)= 1/(D+1)$ for all $i \in [\ell]$ and $\pi(u_j,v_j) = 1/(D+1)$ for all $j \in [n]$ and $\pi(w_1,w_2) = 0$ for all other choices of pairs of vertices.  Then we have
    $$ W_1(\mu_x^p,\mu_y^p) \le \frac{3\ell + n}{D+1}. $$
    Let 
    $$ V_0 := \{x_1,\dots,x_\ell,x,y,y_1,\dots,y_\ell,u_1,\dots,u_n,v_1,\dots,v_n\} , $$
    and $\phi_0: V_0 \to \R$ be a 1-Lip function given by
    $$ \phi_0(w) = \begin{cases} 3, & \text{if $w = x_i$ for $i \in [\ell]$}, \\
    2, &\text{if $w=x$ or $w=u_j$ for $j \in [n]$}, \\ 1, & \text{if $w=y$ or $w=v_j$ for $j \in [n]$,} \\ 
    0, & \text{if $w = y_i$ for $i \in [\ell]$.} \end{cases} $$
    This function can be extended to a 1-Lip function $\phi: V \to \R$ 
    via (see 
    \cite{McS34} or Kirszbraun Theorem)
    $$ \phi(w) = \sup \{ \phi_0(w_0) - d(w_0,w) \, \mid \, w_0 \in V_0 \}, $$
    and Kantorovich duality (Theorem \ref{Kantorovich}) yields
    $$ W_1(\mu_x^p,\mu_y^p) \ge \sum_{w \in V} \phi(w)(\mu_x^p(w)-\mu_y^p(w)) = \frac{3\ell+n}{D+1}. $$
    Consequently, we obtain using \eqref{eq:relLLykp},
    $$ \kappa_{LLY}(x,y) = \frac{D+1}{D} \kappa_p(x,y) = \frac{D+1-(3\ell+n)}{D} = \frac{2-2\ell}{n+\ell+1}. $$
    
    The case $z \in V$ is treated similarly with $D = n + \ell +2$, the same transport plan and the same function $\phi_0$. We then obtain $W_1(\mu_x^p,\mu_y^p) = \frac{3\ell+n}{D+1}$ and
    $$ \kappa_{LLY}(x,y) = \frac{D+1-(3\ell+n)}{D} = \frac{3-2\ell}{n+\ell+2}. $$
\end{proof}

Now we present the proof of the Ollivier Ricci curvature result for RAACHs from the Introduction.

\begin{proof}[Proof of Theorem \ref{thm:mainraachor}]
    Let $\Gamma$ be a RAACH with generating set $S$, defining graph $(H,m)$ and associated pair $(H^*,w)$. Let $s \in S^*$.
    
    Assume first that $\ord(s) \neq 3,4$. Let $G = \Cay(\Gamma,S)$. Then we have the situation illustrated in Figure \ref{OR_figure} (without the vertex $z$) with $x=e$, $y=s$ and $u_1,\dots,u_n$ the elements in $S^* \setminus \{s,s^{-1}\}$ which commute with $s$ and $n = {\rm{deg}}_{H^*}(s)$, $x_1 = s^{-1}$, and $x_2,\dots,x_\ell$ the elements in $S^*$ which do not commute with $s$. Similarly, $v_1,\dots,v_n$ are the elements $su_j$ for $j \in [n]$ and $y_1=s^2$, $y_i = sx_i$ for $i \in [\ell] \setminus \{1\}$. The distance condition $d(x_i,y_j)=3$ of Lemma \ref{lem:ollgeneral} is satisfied since $s,y_j$ cannot be completed to a $3$-, $4$- or $5$-cycle containing $x_i$ because of Proposition \ref{prop:cycles} and since $x_1=s^{-1}$ (which would imply $\ord(s)=3$ in the case that $s,x_1$ and $e$ would be contained in a $5$-cycle) and $x_i$ does not commute with $s$ for $i \in [\ell]\setminus \{1\}$. Similarly, the distance condition $d(x_i,v_j) = 3$ is satisfied since $u_1,v_1=su_1$ cannot be completed to a $3$-, $4$- or $5$-cycle containing $x_i$ for the same reason. A similar argument shows $d(y_i,u_j) =3$. So we can apply Lemma \ref{lem:ollgeneral} with $n = {\rm{deg}}_{H^*}(s)$ and $n+\ell+1 = \deg_G(e)$, and we obtain
    \begin{equation} \label{eq:ollRAACHcurvno3} 
    \kappa_{LLY}(s) = \frac{2-2\ell}{n+\ell+1} = \frac{4+2 \deg_{H^*}(s)}{\deg_G(e)} - 2. 
    \end{equation}
    
    In the case $\ord(s)=4$, the arguments are almost the same, with the only difference that $s^{-1}$ belongs now to the set $\{u_1,\dots,u_n\}$ and $s^2$ to the set $\{v_1,\dots,v_n\}$ (instead of $x_1=s$ and $y_1=s^2$). This leads to the same end result \eqref{eq:ollRAACHcurvno3}.

    In the case $\ord(s)=3$, 
    we have the situation illustrated in Figure \ref{OR_figure} (with the vertex $z$) with $x=e$, $y=s$, $z=s^2$ and $u_1,\dots,u_n$ the elements in $S^* \setminus \{s,s^{-1}\}$ which commute with $s$ and $x_1,\dots,x_\ell$ the elements in $S^*$ which do not commute with $s$. Again, we set $v_j = su_j$ for $j \in [n]$ and $y_i = sx_i$ for $i \in [\ell]$. The distance conditions of Lemma \ref{lem:ollgeneral} are verified via the same arguments, and we obtain 
    $$ \kappa_{LLY}(s) = \frac{3-2\ell}{n+\ell+2} = \frac{3+2 \deg_{H^*}(s)}{\deg_G(e)} - 2.  $$

    Finally, if we have $R_2=S$, then $H^*$ agrees with $H$, $S^*$ agrees with $S$, and we have no generators in $S^*$ of order $3$, 
    and \eqref{eq:ollRAACHcurvno3} simplifies to 
    $$ \kappa_{LLY}(s) = \frac{4+2 \deg_H(s)}{\deg_G(e)} - 2. $$
\end{proof}

\subsection{Bakry-\'Emery curvatures of certain RAACHs}

Our next aim is to derive the Bakry-\'Emery curvature matrix $A(e)$ of the Cayley graph of a RAACH $\Gamma$ with generating set $S$. A first step into this direction is the following lemma. 

\begin{lemma} \label{lem:assocpairsdetinc2balls} The associated pair $(H^*,w)$ of a RAACH $\Gamma$ with generating set $S$ determines the combinatorial structure of the incomplete $2$-ball $\mathring{B}_2(e)$ of the Cayley graph ${\rm{Cay}}(\Gamma,S)$. Moreover, we have the following identity between Laplacians:
\begin{equation} \label{eq:Lapidentity} 
\Delta_{H^*} = 2 \Delta_{S_1(e)} + 2 \Delta_{S_1'(e)} 
\end{equation}
with the Laplacians on the right hand side defined in \eqref{eq:AxLapl}.
\end{lemma}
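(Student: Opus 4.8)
\textbf{Proof plan for Lemma \ref{lem:assocpairsdetinc2balls}.}
The plan is to identify the combinatorial structure of $\mathring B_2(e)$ explicitly in terms of the associated pair $(H^*,w)$ and then read off the Laplacian identity \eqref{eq:Lapidentity} from that description. First I would record that $S_1(e) = S^*$, so the $1$-sphere is identified with the vertex set of $H^*$. The edges inside $S_1(e)$ in the Cayley graph come from $4$-cycles $s, st, t, e$ with $[s,t]=e$ and $t\neq s^{\pm 1}$ (Proposition \ref{prop:cycles}(ii)); hence $s\sim t$ in the induced subgraph $S_1(e)$ if and only if $s,t$ commute and $t\neq s,s^{-1}$, which is exactly a weight-$1$ edge of $H^*$. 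Thus the matrix $\Delta_{S_1(e)}$ is the combinatorial Laplacian of the subgraph of $H^*$ consisting only of its weight-$1$ edges. Next I would describe $S_2(e)$: a vertex $z\in S_2(e)$ is a word $s^2$ with $\ord(s)\ge 3$ (the ``triangle/square'' vertices, using Proposition \ref{prop:cycles}(i),(ii)) or a word $st$ with $[s,t]=e$, $t\neq s^{\pm 1}$. For the purpose of $\mathring B_2(e)$ we only need the bipartite adjacency between $S_1(e)$ and $S_2(e)$, i.e.\ the in-degrees $d_z^-$ and which $y_i\in S_1(e)$ are adjacent to a given $z$.

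The key computation is then the weighted graph $S_1'(e)$ on vertex set $S_1(e)$ with edge weights $w^{S_1'(e)}_{y_iy_j} = \sum_{z\in S_2(e)} \frac{w_{y_iz}w_{y_jz}}{d_z^-}$ for $i\neq j$. So I would enumerate, for each pair $y_i=s$, $y_j=t$ in $S^*$, the common neighbours $z\in S_2(e)$ of $s$ and $t$. There are two cases. If $t\neq s^{\pm 1}$: the only candidate common neighbour at distance $2$ is $z=st$ (when $[s,t]=e$, $t\neq s^{\pm1}$), which has in-degree $d_z^-=2$ (its neighbours in $S_1(e)$ are $s$ and $t$), contributing $\frac12$ to $w^{S_1'(e)}_{st}$; using Proposition \ref{prop:cycles} one checks no other $z$ is simultaneously adjacent to both $s$ and $t$. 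Hence $w^{S_1'(e)}_{s,t} = \tfrac12$ exactly when $s,t$ commute and $t\neq s^{\pm1}$, i.e.\ on the same weight-$1$ edges of $H^*$, and $0$ otherwise. If instead $t = s^{-1}$: the common neighbours of $s$ and $s^{-1}$ at distance $2$ come from the ``power'' vertices — when $\ord(s)=4$, $z=s^2$ is adjacent to both $s$ and $s^{-1}$ with $d_z^-=2$, contributing $\tfrac12$; when $\ord(s)=3$, the vertex $s^2$ is at distance $1$ from both $s$ and $s^{-1}$ (indeed $s^2=s^{-1}$), so this case produces no $S_2(e)$-contribution but instead must be tracked as a doubled edge in $H^*$. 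So I would conclude $w^{S_1'(e)}_{s,s^{-1}} = \tfrac12$ if $\ord(s)=4$ and $0$ otherwise, while the $\ord(s)=3$ situation contributes a weight-$2$ edge $\{s,s^{-1}\}$ that sits entirely in the induced subgraph $S_1(e)$.

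Assembling: $2\Delta_{S_1(e)} + 2\Delta_{S_1'(e)}$ has, for $s\neq t^{\pm1}$, off-diagonal entry $-2(1_{[s,t]=e}) - 2\cdot\tfrac12 1_{[s,t]=e} = -3\,(1_{[s,t]=e})$ — wait, I would instead double-check the bookkeeping so that it matches $w$: the edge weight in $H^*$ on a commuting pair is $1$, so I expect $2\Delta_{S_1(e)}$ to already account for that edge with coefficient $-2$ and then need $\Delta_{S_1'(e)}$'s contribution folded in differently; concretely I would verify directly that for every off-diagonal pair $\{s,t\}$ the sum of the corresponding entries of $2\Delta_{S_1(e)}+2\Delta_{S_1'(e)}$ equals $-w(s,t)$, and that the diagonal entries equal $\sum_{t}w(s,t) = \deg_{H^*}(s)$, which is exactly $\Delta_{H^*}$ since Laplacians have zero row sums. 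The first statement of the lemma then follows because all the adjacency data of $\mathring B_2(e)$ — the vertices of $S_1(e)$, their mutual edges, the vertices of $S_2(e)$, and the bipartite edges to $S_1(e)$ — were expressed purely in terms of $(H^*,w)$ via the cycle classification. The main obstacle is the careful enumeration in Proposition \ref{prop:cycles} style of \emph{all} distance-$2$ vertices and their in-degrees, making sure no double counting occurs (e.g.\ distinguishing $s^2$ for $\ord(s)=3$ from $\ord(s)=4$, and ruling out spurious common neighbours for non-commuting pairs), so that the weighted sum collapses exactly to $\Delta_{H^*}$; the Laplacian identity itself is then just matching coefficients.
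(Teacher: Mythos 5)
Your overall strategy is the same as the paper's: use Proposition \ref{prop:cycles} to classify the $3$- and $4$-cycles through $e$, read off from this the induced subgraph on $S_1(e)$ and the common-neighbour structure in $S_2(e)$, match the off-diagonal entries of both sides of \eqref{eq:Lapidentity}, and recover the diagonal from the zero-row-sum property of Laplacians. However, your first paragraph contains a genuine error that your own computation later exposes and that you never actually repair. You claim that the edges \emph{inside} $S_1(e)$ come from the $4$-cycles $e,s,st,t$, so that $s\sim t$ in the induced subgraph exactly when $s,t$ commute and $t\neq s^{\pm1}$, and hence that $\Delta_{S_1(e)}$ is the Laplacian of the weight-$1$ part of $H^*$. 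This is false: in the $4$-cycle $e,s,st,t$ the vertices $s$ and $t$ sit at \emph{opposite} corners, so $d(s,t)=2$ and they are not adjacent in $\Cay(\Gamma,S)$. The only edges of the induced subgraph on $S_1(e)$ come from the triangles $e,s,s^2$ with $\ord(s)=3$ (Proposition \ref{prop:cycles}(i)), i.e.\ from the pairs $\{s,s^{-1}\}$ that are precisely the weight-$2$ edges of $H^*$ --- which you in fact say correctly in your second paragraph, contradicting the first.

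The misattribution is exactly what produces the ``$-2-1=-3$'' you notice in the assembly step, and saying you ``would instead double-check the bookkeeping'' does not close the gap; the bookkeeping has to be corrected, not rechecked. The correct accounting is that the two Laplacians on the right of \eqref{eq:Lapidentity} have \emph{disjoint} off-diagonal supports: a weight-$2$ edge of $H^*$ (pair $s,s^{-1}$ with $\ord(s)=3$) is an edge of the induced subgraph $S_1(e)$ and contributes $2\cdot 1+0=2=w(s,s^{-1})$; a weight-$1$ edge of $H^*$ (a commuting pair with $t\neq s^{\pm1}$, or $s,s^{-1}$ with $\ord(s)=4$) is \emph{not} an edge of $S_1(e)$ but yields a unique common neighbour $st$ (resp.\ $s^2$) in $S_2(e)$ of in-degree $2$, contributing $0+2\cdot\tfrac12=1=w(s,t)$; all other pairs contribute $0$ (by Proposition \ref{prop:cycles}, which also rules out any additional common neighbours and pins the in-degrees at $2$). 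With that correction the off-diagonal entries match $\Delta_{H^*}$ exactly and the rest of your argument (zero row sums for the diagonal, and the first assertion of the lemma following from the cycle classification) goes through as in the paper.
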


\begin{proof}
The vertices in $S_1(e)$ coincide with $S^*$, which is the vertex set of $H^*$. To understand $\mathring{B}_2(e)$, we need to understand the $3$-cycles and $4$-cycles containing $e$. They are described in Proposition \ref{prop:cycles}
and are of the form  $e,s,s^{-1}$ for $s \in R_3 \subset S$, $e,s,s^2,s^3$ for $s \in R_4 \subset S$, and $e,s,st,t$ for any commuting pair of $s,t \in S^*$ with $s \neq t,t^{-1}$. This shows that any edge of ${\rm{Cay}}(\Gamma,S)$ connecting two different vertices in $S_1(e)$ is precisely described by vertices $s,t \in S^*$ with $w(s,t) = 2$ (and therefore $t=s^2$, $e=s^3$) and the corresponding off-diagonal entries of $\Delta_{H^*}$ and $2 \Delta_{S_1(e)} = 2 \Delta_{S_1(e)} + 2 \Delta_{S_1'(e)}$ coincide. Moreover, any $4$-cycle containing $e$ must stem from a commuting pair of different generators $s,t \in S^*$ satisfying $s \neq t^{-1}$ unless ${\rm{ord}}(s) = 4$. This is precisely described by the property $w(s,t) = 1$ and the corresponding off-diagonal of $\Delta_{H^*}$ agrees with the corresponding off-diagonal matrix of $2\Delta_{S_1'(e)}$, since in this case $s$ and $t$ have a unique common neighbour in $S_2(e)$, namely $st$, whose in-degree is equal to $2$. Moreover, the corresponding off-diagonal of $\Delta_{S_1(e)}$ is zero. This shows that all off-diagonal entries of $\Delta_{H^*}$ and $2 \Delta_{S_1(e)} + 2 \Delta_{S_1'(e)}$ agree and, since the off-diagonal entries uniquely determine the diagonal entries of Laplacians, we have the agreement \eqref{eq:Lapidentity} stated in the lemma. 
\end{proof}

We need the following lemma.

\begin{lemma}
  Let $\Gamma$ be a RAACH with generating set $S$ and defining graph $(H,m)$, $(H^*,w)$ be its associated pair, $S^*=\{s,s^{-1} \mid s \in S\}$ and $D=|S^*|$. If $R_3 = \emptyset$ and $D \ge 2$ or $R_3 = S$ and $D\ge 4$, we have
  $$ \lambda_2(-\Delta_{H^*}) \le |S^*|. $$
\end{lemma}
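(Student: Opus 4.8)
The plan is to estimate $\lambda_2(-\Delta_{H^*})$ from above by the Rayleigh quotient of a single, explicitly chosen test function in each of the two cases. Since $-\Delta_{H^*}$ is symmetric and positive semidefinite (we use $m\equiv 1$) with $-\Delta_{H^*}\mathbf{1}=0$, applying the Courant--Fischer min-max characterization to the $2$-dimensional space $\mathrm{span}\{\mathbf{1},g\}$ shows that for every $g$ with $\sum_{s\in S^*}g(s)=0$ and $g\not\equiv 0$,
$$
\lambda_2(-\Delta_{H^*}) \;\le\; \frac{\langle -\Delta_{H^*}g,g\rangle}{\|g\|^2} \;=\; \frac{\sum_{\{a,b\}\in E^*} w(a,b)\,(g(a)-g(b))^2}{\sum_{s\in S^*}g(s)^2}.
$$
So it suffices to produce, in each case, one such $g$ whose Rayleigh quotient is at most $D=|S^*|$.

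In the case $R_3=\emptyset$, every edge of $H^*$ has weight $1$, since a weight-$2$ edge would require a generator of order $3$; thus $H^*$ is an ordinary simple graph on $D$ vertices. I would choose $g=\mathbbm{1}_{\{a\}}-\mathbbm{1}_{\{b\}}$ for two distinct vertices $a,b\in S^*$ (possible since $D\ge 2$), compute $\langle -\Delta_{H^*}g,g\rangle=\deg_{H^*}(a)+\deg_{H^*}(b)+2w(a,b)$ and $\|g\|^2=2$, and observe that $\deg_{H^*}(a),\deg_{H^*}(b)\le D-1$ and $w(a,b)\le 1$, so the Rayleigh quotient is at most $D$. (Equivalently one may invoke the classical fact that all Laplacian eigenvalues of a simple graph on $D$ vertices lie in $[0,D]$, which follows from $-\Delta_{H^*}+(-\Delta_{\overline{H^*}})=D\,\Id_D-J$.)

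In the case $R_3=S$ the defining graph has $|S|=D/2\ge 2$ vertices, so I can pick two distinct generators $s,t\in S$ and set $g=\mathbbm{1}_{\{s,s^{-1}\}}-\mathbbm{1}_{\{t,t^{-1}\}}$, which satisfies $\sum g=0$ and $\|g\|^2=4$. The key feature is that $g$ is constant on each of the weight-$2$ pairs $\{s,s^{-1}\}$ and $\{t,t^{-1}\}$, so those heavy edges contribute nothing to $\langle -\Delta_{H^*}g,g\rangle$; the remaining contributions come only from weight-$1$ edges, namely the cross-edges between $\{s^{\pm 1}\}$ and $\{t^{\pm 1}\}$ (by the definition of $w$ these are present either all four or none, according to whether $s$ and $t$ commute, each contributing $4$) and the edges running from $\{s^{\pm 1}\}$ or $\{t^{\pm 1}\}$ into the remaining $D-4$ vertices of $S^*$ (each contributing $1$). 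A short count, using that there are only $D-4$ such outside vertices and that $s$ commutes with $t$ iff $s^{\pm 1}$ commutes with $t^{\pm 1}$, then gives $\langle -\Delta_{H^*}g,g\rangle\le 4D$ in both the commuting and non-commuting cases, hence Rayleigh quotient at most $D$.

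The only real obstacle is the presence of weight-$2$ edges in the case $R_3=S$: the top eigenvalue of $-\Delta_{H^*}$ can exceed $D$ (it equals $D+2$ when $H$ is complete), so a crude bound on $\lambda_{\max}$ does not suffice and one must genuinely localize at $\lambda_2$. This is precisely what the choice of a test function that is \emph{constant across every weight-$2$ edge} achieves, and it also explains why the hypothesis $D\ge 4$ (that is, $|S|\ge 2$, so that two distinct matching pairs exist) is needed. Everything else is elementary edge-counting in $H^*$ using the explicit description of $w$.
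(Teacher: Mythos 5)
Your proposal is correct and follows essentially the same route as the paper: both bound $\lambda_2(-\Delta_{H^*})$ by the Rayleigh quotient of the test function $\mathbbm{1}_{\{a\}}-\mathbbm{1}_{\{b\}}$ (case $R_3=\emptyset$) respectively $\mathbbm{1}_{\{s,s^{-1}\}}-\mathbbm{1}_{\{t,t^{-1}\}}$ (case $R_3=S$), exploiting that the latter is constant across every weight-$2$ edge so that only weight-$1$ edges contribute. Your edge count matches the paper's computation, which likewise yields exactly $D$ after dividing by $\|g\|^2$.
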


\begin{proof}
    We have the following variational characterization of $\lambda_2(-\Delta_{H^*})$:
    $$ \lambda_2(-\Delta_{H^*}) = \inf\left\{ \frac{\frac{1}{2} \sum_{s,s' \in S^*} w(s,s') (f(s')-f(s))^2}{\sum_{s \in S^*} (f(s))^2} \, \Big\vert \, \sum_{s \in S^*} f(s) = 0 \right\}. $$
    In the case $R_3=\emptyset$ with $D \ge 2$, we can choose $t_1,t_2 \in S^*$, $t_1 \neq t_2$, such that $f(t_1) = -f(t_2) = 1$ and $f(s)=0$ for all $s \in S^* \setminus \{t_1,t_2\}$. In the case $R_3=S$ with $D \ge 4$, we can choose $s_1,s_2 \in S$, $s_1 \neq s_2$, such that $f(s_1)=f(s_1^{-1}) = -f(s_2)=-f(s_2^{-1}) = 1$ and $f(s) = f(s^{-1})=0$ for all $s \in S \setminus \{s_1,s_2\}$, to satisfy $\sum_{s \in S^*} f(s) = 0$. Note that this choice implies that $w(s,s') \in \{0,1\}$ for all $s,s' \in S^*$ with $f(s')-f(s) = 0$, and we can use the estimate
    $$ \lambda_2(-\Delta_{H^*}) \le \frac{\frac{1}{2} \sum_{s,s' \in S^*} (f(s')-f(s))^2}{\sum_{s \in S^*} (f(s))^2}. $$
    In the case $R_3 = \emptyset$, this estimate yields
    $$ \lambda_2(-\Delta_{H^*}) \le \frac{4 + 4 + 4(D-2)}{4} = D. $$
    Similarly, in the case $R_3 = S$, we obtain
    $$ \lambda_2(-\Delta_{H^*}) \le \frac{8\cdot 4+ 8(D-4)}{8} = D. $$
\end{proof}

Now we can provide the proof of the Bakry-\'Emery curvature result for certain RAACHs from the Introduction.

\begin{proof}[Proof of Theorem \ref{thm:mainraachbe}]
    Using \eqref{eq:AxLapl} for the curvature matrix $A(e)$ and the identity \eqref{eq:Lapidentity}, we obtain
    $$ A(e) = - \Delta_{H^*} + J + \frac{3-D}{2}{\rm{Id}} - \frac{1}{2}{\rm{diag}}(d_1^*,\dots,d_D^+). $$
    Let $S_1(e)$ be the $1$-sphere of $e$ in the Cayley graph $G$.
    Since a vertex $s \in S_1(e)$ is adjacent to at most one other vertex in $S_1(e)$, in which case we have ${\rm{ord}}(s)=3$, we conclude that 
    $$ {\rm{diag}}(d_1^*,\dots,d_D^+) = (\underbrace{D-2,\dots,D_2}_{\ell},\underbrace{D-1,\dots,D-1}_{D-\ell}). $$
    Combining these facts, we end up with
    $$ A(e) = (2-D) {\rm{Id}} + J - \Delta_{H^*} + \frac{1}{2} {\rm{diag}}(\underbrace{1,\dots,1}_{\ell},\underbrace{0,\dots,0}_{D-\ell}). $$

    Since $K(e) = \lambda_{\rm{min}}(A(e))$, we conclude in the case $R_3 = \emptyset$ that
    $$ K(e) = 2-D + \lambda_{\rm{min}}(J - \Delta_{H^*}). $$
    The assumption $R_3 = \emptyset$ implies also that $(H^*,w)$ does not have any edges of weight $2$ and $\Delta_{H^*}$ is therefore the Laplacian of the combinatorial graph $H^*$.
    
    Note that $J$ and $-\Delta_{H^*}$ have a common orthogonal eigenvector decomposition: the eigenvalue of $J-\Delta_{H^*}$ to the constant eigenvector is $D$ and the eigenvalues of all other eigenvectors (perpendicular to the constant eigenvector) are $\lambda_j(-\Delta_{H^*})$ for $j \in [D] \setminus \{1\}$. Since
    $$ \lambda_2(-\Delta_{H^*}) \le \lambda_2(-\Delta_{K_D}) = D, $$
    (where $K_D$ denotes the complete graph with $D$ vertices), we obtain
    $$ K(e) = 2-D + \lambda_2(-\Delta_{H^*}). $$

    Similarly, we conclude in the case $R_3=S$ that
    $$ K(e) = \frac{5}{2} - D + \lambda_{\rm{min}}(J-\Delta_{H^*}). $$

    The eigenvalue analysis of $J-\Delta_{H^*}$ remains the same and we still have $\lambda_2(-\Delta_{H^*}) \le D$. Therefore, we obtain
    $$ K(e) = \frac{5}{2} -D + \lambda_2(-\Delta_{H^*}). $$
\end{proof}

\section{Adding relators does not decrease curvatures}
\label{sec:curvmon}

Before we present the proof of Theorem \ref{thm:addrel} in the Introduction, we first prove a more general result about surjective $1$-Lipschitz maps. A map $\Phi: V \to V'$ between the vertex sets of two combinatorial graphs $G=(V,E)$ and $G'=(V',E')$ is called a surjective $1$-Lipschitz map if we have $\Phi(V) = V'$ and
$$ d'(\Phi(x),\Phi(y)) \le d(x,y), $$
where $d$ and $d'$ are the combinatorial distance functions of the graphs $G$ and $G'$. Note that $\Phi$ does not necessarily induces a map between the edge sets $E$ and $E'$ (since two adjacent vertices $x,y\in V$ in $G$ can be mapped to the same vertex $\Phi(x)=\Phi(y)$ in $G'$). The following relation between weigthed Laplacians of surjective $1$-Lipschitz maps is important for our curvature results.

\begin{lemma} \label{lem:laprel}
  Let $\Phi: V \to V'$ be a surjective $1$-Lipschitz map between $G=(V,E)$ and $G'=(V',E')$. Let $(m,w)$ and $(m'w')$ be weigthing schemes on $G$ and $G'$, respectively. We assume the following relation between them:
  \begin{itemize}
      \item[(i)] For all $x' \in V'$ and
      $x \in \Phi^{-1}(x)$:
      \begin{equation} \label{eq:mm'rel} 
      m(x) = m'(x').
      \end{equation}
      \item[(ii)] For all $x',y' \in V'$ with $x' \sim y'$ and all $x \in \Phi^{-1}(x')$:
      \begin{equation} \label{eq:w'wrel} 
      w'(x',y') = \sum_{y \in \Phi^{-1}(y')} w(x,y). 
      \end{equation}
  \end{itemize}    
  Then we have the following relation between the weighted Laplacians $\Delta$ and $\Delta'$ on $G$ and $G'$:
  $$ (\Delta (f' \circ \Phi)(x)) = (\Delta' f')(\Phi(x))  $$
  for all functions $f': V' \to \mathbb{R}$ and $x \in V$.
\end{lemma}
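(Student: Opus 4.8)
The plan is to compute both sides directly from the definition \eqref{eq:wlap} of the weighted Laplacian and match them term by term, using the hypotheses \eqref{eq:mm'rel} and \eqref{eq:w'wrel} together with the fact that $\Phi$ is surjective and $1$-Lipschitz. Fix $x \in V$ and set $x' = \Phi(x)$. Writing $g = f' \circ \Phi$, we have
$$ \Delta g(x) = \frac{1}{m(x)} \sum_{y \in V} w(x,y)\bigl(f'(\Phi(y)) - f'(\Phi(x))\bigr) = \frac{1}{m(x)} \sum_{y \sim x} w(x,y)\bigl(f'(\Phi(y)) - f'(x')\bigr), $$
since $w(x,y) = 0$ unless $y \sim x$. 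The idea is to partition the neighbours $y$ of $x$ according to the value $\Phi(y)$. Because $\Phi$ is $1$-Lipschitz, every neighbour $y$ of $x$ satisfies $d'(\Phi(y), x') \le 1$, so $\Phi(y)$ is either $x'$ itself or a neighbour of $x'$ in $G'$; in the former case the summand vanishes. Hence only neighbours $y$ with $\Phi(y) \sim x'$ contribute, and
$$ \Delta g(x) = \frac{1}{m(x)} \sum_{y' \sim x'} \;\Bigl(\sum_{y \in \Phi^{-1}(y'),\, y \sim x} w(x,y)\Bigr)\bigl(f'(y') - f'(x')\bigr). $$

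Next I would observe that in the inner sum the condition $y \sim x$ is automatic: if $y \in \Phi^{-1}(y')$ with $y \not\sim x$ then $w(x,y) = 0$, so restricting to neighbours changes nothing and the inner sum equals $\sum_{y \in \Phi^{-1}(y')} w(x,y)$, which by hypothesis \eqref{eq:w'wrel} is precisely $w'(x',y')$. Substituting this and using \eqref{eq:mm'rel} in the form $m(x) = m'(x')$ gives
$$ \Delta g(x) = \frac{1}{m'(x')} \sum_{y' \sim x'} w'(x',y')\bigl(f'(y') - f'(x')\bigr) = \frac{1}{m'(x')} \sum_{y' \in V'} w'(x',y')\bigl(f'(y') - f'(x')\bigr) = \Delta' f'(x'), $$
which is exactly the claimed identity $\Delta(f'\circ\Phi)(x) = (\Delta' f')(\Phi(x))$.

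The only genuinely delicate point is the bookkeeping in the regrouping step: one must be careful that hypothesis \eqref{eq:w'wrel} is stated for a \emph{fixed} preimage $x \in \Phi^{-1}(x')$ and all edges $x' \sim y'$, which is exactly the form needed here, and that the sum $\sum_{y'\sim x'}$ over the image ranges over all of $V'$ harmlessly because $w'(x',y') = 0$ when $y' \not\sim x'$. There is also the implicit consistency check that the right-hand side $(\Delta' f')(\Phi(x))$ does not depend on the choice of $x$ within a fibre $\Phi^{-1}(x')$, which is automatic since it is manifestly a function of $x' = \Phi(x)$ alone; it is the left-hand side whose fibrewise constancy is the content of the lemma. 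No estimate or optimization is involved — the whole proof is a direct manipulation of finite sums — so there is no real obstacle beyond writing the index juggling cleanly.
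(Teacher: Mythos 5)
The proposal is correct and takes essentially the same approach as the paper's own proof: both compute $\Delta(f'\circ\Phi)(x)$ directly from the definition, regroup the sum over $V$ by the fibres $\Phi^{-1}(y')$, and then apply hypotheses \eqref{eq:mm'rel} and \eqref{eq:w'wrel}. Your version is, if anything, slightly more explicit than the paper's about why the terms with $\Phi(y)=x'$ or $\Phi(y)\not\sim x'$ contribute nothing (the former because the difference $f'(y')-f'(x')$ vanishes, the latter via the $1$-Lipschitz property), which the paper leaves implicit.
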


\begin{proof}
    Let $x' = \Phi(x)$ and $f = f' \circ \Phi: V \to \mathbb{R}$. 
    Since $G$ is simple, we can extend our edge weights $w: E \to (0,\infty)$ to a symmetric map $w: V \times V \to [0,\infty)$ with $w(x,y) = 0$
    for $x=y$ and for $x \neq y$ which are not adjacent. We will also write $w_{xy}$ for $w(x,y)$. Similarly, we can extend the map $w'$ of $G'$, and we have
    \begin{multline*}
        (\Delta f)(x) = \frac{1}{m(x)} \sum_{y \in V} w_{xy} (f(y)-f(x)) \\ =
        \frac{1}{m(x)} \sum_{y' \in V'} \sum_{y \in \Phi^{-1}(y')} w_{xy} (f(y)-f(x)) = \frac{1}{m'(x')} \sum_{y' \in V'} \left( \sum_{y \in \Phi^{-1}(y')} w_{xy} \right) (f'(y') - f'(x')) \\ = \frac{1}{m'(x')} \sum_{y' \in V'} w_{x'y'} (f'(y')-f'(x')) = (\Delta' f')(x').
    \end{multline*}
\end{proof}

\begin{rmk} \label{rmk:edgepreim}
    Note that the edge weight function $w': E' \to (0,\infty)$ is a positive function. Therefore, the right hand sum of the relation \eqref{eq:w'wrel} must be non-empty. This implies under the assumptions of the lemma that for every edge $\{x',y'\} \in E'$ and every $x \in \Phi^{-1}(x')$, there must be at least one edge $\{x,y\} \in E$ with $\Phi(y) = y'$.   
\end{rmk}

Now we can state the following general result about curvature relations:

\begin{thm} \label{thm:curvrel}
Let $\Phi: V \to V'$ be a surjective $1$-Lipschitz map between $G=(V,E)$ and $G'=(V',E')$. Let $(m,w)$ and $(m'w')$ be weigthing schemes on $G$ and $G'$, respectively, satisfying the relations \eqref{eq:mm'rel} and \eqref{eq:w'wrel}. Then we have for every edge $\{x',y'\} \in E'$, 
\begin{equation} \label{eq:ollcrel} 
\kappa_{LLY}^G(x',y') \ge \kappa_{LLY}^{G'}(x,y), 
\end{equation}
where $\{x,y\} \in E$ is any edge with $x' = \Phi(x)$ and $y' = \Phi(y)$ (such an edge $\{x,y\}$ exists due to the previous remark).
Moreover, we have for every $x' \in V'$ and every $x \in \Phi^{-1}(x')$,
\begin{equation} \label{eq:becrel} 
K^{G'}(x') \ge K^G(x). 
\end{equation}
\end{thm}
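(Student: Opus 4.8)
The plan is to transfer each curvature inequality by pulling back the relevant test objects along $\Phi$ and invoking Lemma \ref{lem:laprel}, which says that $\Delta(f'\circ\Phi) = (\Delta' f')\circ\Phi$. The key structural fact I will use repeatedly is that $\Phi$ is $1$-Lipschitz, so a $1$-Lipschitz function $f'$ on $G'$ pulls back to a $1$-Lipschitz function $f'\circ\Phi$ on $G$; and conversely, thanks to Remark \ref{rmk:edgepreim}, every edge of $G'$ and every vertex of $G'$ lifts to $G$, so the statements make sense.

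\emph{Ollivier Ricci inequality \eqref{eq:ollcrel}.} I would use the Münch--Wojciechowski limit-free formula
$$ \kappa_{LLY}(x,y) = \inf_{\substack{f\in 1\text{-}\mathrm{Lip}\\ f(y)-f(x)=1}} \Delta f(x) - \Delta f(y). $$
Fix an edge $\{x',y'\}\in E'$ and a lift $\{x,y\}\in E$ with $\Phi(x)=x'$, $\Phi(y)=y'$. Let $f'$ be any $1$-Lipschitz function on $G'$ with $f'(y')-f'(x')=1$; then $f := f'\circ\Phi$ is $1$-Lipschitz on $G$ and satisfies $f(y)-f(x) = f'(y')-f'(x') = 1$. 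By Lemma \ref{lem:laprel},
$$ \Delta f(x) - \Delta f(y) = (\Delta' f')(x') - (\Delta' f')(y') \ge \kappa_{LLY}^{G'}(x',y'). $$
Taking the infimum over all admissible $f'$ does not immediately give what we want, because the infimum defining $\kappa_{LLY}^G(x,y)$ ranges over \emph{all} admissible $f$ on $G$, not only those of the form $f'\circ\Phi$. But the direction we need is the easy one: the pulled-back functions form a subset of the competitors for $\kappa_{LLY}^G(x,y)$, hence
$$ \kappa_{LLY}^G(x,y) \;\le\; \inf_{\substack{f'\in 1\text{-}\mathrm{Lip}\\ f'(y')-f'(x')=1}} \bigl(\Delta f(x) - \Delta f(y)\bigr) \;=\; \kappa_{LLY}^{G'}(x',y'). $$
Wait — this gives the inequality with the roles reversed relative to \eqref{eq:ollcrel}. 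So in fact I must be more careful: the correct reading is that the \emph{larger} graph $G'$ (with more collapsing) should have larger curvature, i.e. \eqref{eq:ollcrel} as stated reads $\kappa_{LLY}^G(x',y') \ge \kappa_{LLY}^{G'}(x,y)$, which is a slight notational inversion in the theorem statement; the substantive content is that pulling back an optimal Kantorovich-type potential on the quotient gives a valid competitor upstairs, while not every competitor upstairs descends, so the quotient curvature is the infimum over a \emph{smaller} family and is therefore $\ge$. I would phrase the argument so that it is the quotient (more-relations) side that is bounded below, matching the intended monotonicity $K_{G'}\ge K_G$ of Theorem \ref{thm:addrel}.

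\emph{Bakry--\'Emery inequality \eqref{eq:becrel}.} Here I would use the defining inequality: $K^G(x)$ is the largest $K$ with $\mathbf{\Gamma}_2 f(x) - K\,\mathbf{\Gamma} f(x) \ge 0$ for all $f$ on $G$. The crucial observation is that $\mathbf{\Gamma}$ and $\mathbf{\Gamma}_2$ are built entirely out of $\Delta$ by pointwise algebraic operations (products and the Laplacian), so Lemma \ref{lem:laprel} upgrades to $\mathbf{\Gamma}(f'\circ\Phi)(x) = \mathbf{\Gamma}'(f')(x')$ and $\mathbf{\Gamma}_2(f'\circ\Phi)(x) = \mathbf{\Gamma}_2'(f')(x')$ — this is because each intermediate quantity like $\Delta(fg)$, $f\Delta g$, etc., is preserved: e.g. $(f'\circ\Phi)(g'\circ\Phi) = (f'g')\circ\Phi$, and then one applies the lemma again. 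Thus if $K = K^G(x)$ works for all functions on $G$, then in particular for all pulled-back functions $f'\circ\Phi$, giving $\mathbf{\Gamma}_2'(f')(x') - K\,\mathbf{\Gamma}'(f')(x') \ge 0$ for all $f'$ on $G'$; hence $K^{G'}(x') \ge K = K^G(x)$, which is \eqref{eq:becrel}.

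\emph{Main obstacle.} The routine part is verifying that $\mathbf{\Gamma}$ and $\mathbf{\Gamma}_2$ genuinely commute with pullback; this requires only the identity of Lemma \ref{lem:laprel} together with $(f'g')\circ\Phi = (f'\circ\Phi)(g'\circ\Phi)$ and a short induction through the definitions of $\mathbf{\Gamma}$ and $\mathbf{\Gamma}_2$. The one genuine subtlety I would want to get right is the direction of the inequality and the handling of the vertex/edge preimages: one must check that $\{x,y\}$ being an edge of $G$ with the prescribed images really exists (Remark \ref{rmk:edgepreim}) and that $f'\circ\Phi$ is a legitimate competitor, i.e. that $f'(y')-f'(x')=1$ forces $f(y)-f(x)=1$ and $1$-Lipschitzness is preserved — both immediate from $\Phi$ being a $1$-Lipschitz surjection. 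So the hard part is essentially bookkeeping about which family of test functions shrinks under pullback; there is no analytic difficulty.
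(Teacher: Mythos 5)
Your proof is correct and follows essentially the same route as the paper: the Münch--Wojciechowski infimum formula plus the pullback identity of Lemma \ref{lem:laprel} for the Ollivier part, and the commutation of ${\bf{\Gamma}}, {\bf{\Gamma}}_2$ with pullback for the Bakry--\'Emery part, with the inequality in each case coming from the fact that pulled-back test functions form a (possibly proper) subfamily of the competitors on $G$. You also correctly diagnosed that the superscripts in \eqref{eq:ollcrel} are transposed in the theorem statement; the paper's own proof establishes $\kappa_{LLY}^{G'}(x',y') \ge \kappa_{LLY}^{G}(x,y)$, exactly as you concluded.
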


\begin{proof}
    Let $\{x',y'\} \in E'$ and $\{x,y\} \in E$ be edges with $x' = \Phi(x)$ and $y' = \Phi(y)$. Let us first prove the statement \eqref{eq:ollcrel} about the Ollivier Ricci curvature. Note that we have $d(x,y) = d'(x',y') = 1$. 

    Next we observe the following facts about \textrm{\rm{1}--{\rm Lip}} functions: if $f': V' \to \mathbb{R}$ is  \textrm{\rm{1}--{\rm Lip}} in $G'$, then $f = f' \circ \Phi: V \to \mathbb{R}$ is \textrm{\rm{1}--{\rm Lip}} in $G$, since we have for $u,v \in V$,
    $$ |f(u)-f(v)| = |f'(\Phi(u))-f'(\Phi(v))| \le d'(\Phi(u),\Phi(v)) \le d(u,v). $$
    On the other hand, there may be \textrm{\rm{1}--{\rm Lip}} functions $f: V \to \mathbb{R}$ which are not of the form $f = f' \circ \Phi$.

    Bringing these facts together, we conclude with Lemma \ref{lem:laprel} that
    $$ \kappa_{LLY}^{G'}(x',y') = \inf_{\substack{f' \in \textrm{\rm{1}--{\rm Lip}}\\ f'(y')-f'(x') = 1}} \Delta' f'(x') - \Delta' f'(y') \ge \inf_{\substack{f \in \textrm{\rm{1}--{\rm Lip}}\\ f(y)-f(x) = 1}} \Delta f(x) - \Delta f(y) = \kappa_{LLY}^G(x,y), $$
    since the infimum on the right hand side may be over a larger set of \textrm{\rm{1}--{\rm Lip}} functions.

    For the proof of the Bakry-\'Emery curvature relation \eqref{eq:becrel}, we observe the following relations between the bilinear forms
    ${\bf{\Gamma}}, {\bf{\Gamma}}_2$ and ${\bf{\Gamma}'},{\bf{\Gamma}}_2'$ on $G$ and $G'$: We conclude with Lemma \ref{lem:laprel} that, for $f': V' \to \mathbb{R}$, $f = f \circ \Phi$ and $x \in \Phi^{-1}(x')$,
    \begin{multline*}
        {\bf{\Gamma}}'f'(x') = \frac{1}{2} \left( \Delta'((f')^2)(x') - 2 f'(x')(\Delta'f')(x') \right) \\ = 
        \frac{1}{2} \left( \Delta((f)^2)(x) - 2 f(x)(\Delta f)(x) \right) ={\bf{\Gamma}} f(x),
    \end{multline*}
    and similarly,
    $$ {\bf{\Gamma}}_2'f'(x') = {\bf{\Gamma}}_2f(x). $$
    This implies
    $$ K^{G'}(x') = \inf_{f':\,\,\, {\bf{\Gamma}}'f'(x') = 1} {\bf{\Gamma}}_2'f'(x') \ge \inf_{f:\,\,\, {\bf{\Gamma}}f(x) = 1} {\bf{\Gamma}}_2f(x) = K^G(x), $$
    since the infimum on the right hand side might be over a larger set of functions.
\end{proof}

We finish this section by showing that the statement in Theorem \ref{thm:addrel} is a special case of the more general Theorem \ref{thm:curvrel}.

\begin{proof}[Proof of Theorem \ref{thm:addrel}] Let $G=\Cay(\Gamma,S_\Gamma)$, $G'=\Cay(\Gamma',S_{\Gamma'})$ and $\Phi: \Gamma \to \Gamma'$ be defined by $\Phi([w]_R) = [w]_{R'}$. This construction implies that the map $\Phi$ is surjective and maps adjacent vertices $[w]_R$ and $[ws]_R$ in $G$ either to adjacent vertices $[w]_{R'}$ and $[ws]_{R'}$ or to the same vertex $[w]_{R'} = [ws]_{R'}$. This implies that $\Phi$ is a surjective $1$-Lipschitz map. Moreover, every edge incident to the identity element in $G'$ corresponds to a non-trivial equivalence class $[s]_{R'}$ for some $s \in S$, and the assumption $R' \supset R$ implies that the equivalence class $[s]_R$ is also non-trivial. This shows that the statement in Remark \ref{rmk:edgepreim} is satisfied.
The vertex measures $m = \mathbbm{1}_V$ and $m' = \mathbbm{1}_{V'}$ on $G$ and $G'$ satisfy the relation \eqref{eq:mm'rel}, and condition \eqref{eq:w0w0'} implies the edge weight relation \eqref{eq:w'wrel}. 
Therefore, we can apply Theorem \ref{thm:curvrel} and finish the proof.   
\end{proof}

{\bf{Acknowledgements:}} Shiping Liu is supported by the National Key R and D Program of China 2020YFA0713100 and the National Natural Science Foundation of China No. 12031017. David Cushing is supported by the Leverhulme Trust Research Project Grant number RPG-2021-080. Supanat Kamtue is supported
by Shuimu Scholar Program of Tsinghua University No. 2022660219. We like to thank Viola Siconolfi for stimulating discussions on this topic.

\end{document}